\theoremstyle{definition}
\newtheorem{defin}{Definition}[section]
\theoremstyle{remark}
\newtheorem{example}[defin]{Example}
\newtheorem{remar}[defin]{Remark}
\newtheorem{remark}[defin]{Remark}
\theoremstyle{plain}
\newtheorem{thm}[defin]{Theorem}
\newtheorem{prop}[defin]{Proposition}
\newtheorem{lemma}[defin]{Lemma}
\newtheorem{corol}[defin]{Corollary}
\def\wt{\widetilde}
\def\ol{\overline}
\def\ov{\overline}
\def\tl{\widetilde}
\def\wh{\widehat}
\numberwithin{equation}{section}
\begin{document}

\title{Exact number of ergodic invariant measures for Bratteli diagrams}

\date{}

\author{Sergey Bezuglyi}
\address{Department of Mathematics, University of Iowa, Iowa City,
52242 IA, USA}
\email{sergii-bezuglyi@uiowa.edu}

\author{Olena Karpel}
\address{B. Verkin Institute for Low Temperature Physics and Engineering,
Kharkiv, Ukraine; Department of Dynamical Systems, Institute of Mathematics of
Polish Academy of Sciences, Wroclaw, Poland}

\curraddr{Faculty of Applied Mathematics, AGH University of Science and Technology
 Krakow, Poland}

\email{helen.karpel@gmail.com}

\author{Jan Kwiatkowski}

\address{Kotarbinski University of Information Technology and Management,
Olsztyn, Poland}

\email{jkwiat@mat.umk.pl}

\begin{abstract}
For a Bratteli diagram $B$, we study the simplex $\mathcal{M}_1(B)$ of
 probability measures on the path space of $B$ 
which are invariant with respect to the tail equivalence relation. Equivalently, 
$\mathcal{M}_1(B)$ is formed by  probability measures  invariant with
 respect to a homeomorphism of a Cantor set.  
We study relations between the number 
of ergodic measures from  $\mathcal{M}_1(B)$  and the structure and 
properties  of the diagram $B$. 
We prove a criterion and find sufficient conditions  of unique ergodicity of a
 Bratteli diagram, in which case the simplex 
$\mathcal{M}_1(B)$ is a singleton. For   a 
finite rank $k$ Bratteli diagram $B$ having exactly $l \leq k$ 
ergodic invariant measures, we explicitly describe the structure of the
 diagram and  find the subdiagrams  which support these measures. We find
sufficient  conditions under  which: (i) a Bratteli diagram has a prescribed
 number (finite or infinite) of ergodic invariant measures, and 
  (ii) the extension of a measure from a uniquely ergodic subdiagram gives a finite ergodic invariant measure.  Several
 examples, including stationary Bratteli diagrams, Pascal-Bratteli diagrams, and
 Toeplitz flows, are  considered.
\end{abstract}
\maketitle
\tableofcontents

\section{Introduction}
The concept of a Bratteli diagram appeared first in the famous paper by
O. Bratteli  \cite{Bratteli1972} devoted to the classification of approximately 
finite $C^*$-algebras (named AF-algebras).
Bratteli proved that AF-algebras can be completely described
combinatorially by introducing a certain type of graphs, now called
Bratteli diagrams.

Though Bratteli diagrams were introduced to answer a challenging problem
in  the  $C^*$-algebra theory, these diagrams have  found a surprisingly
 large number of diverse applications in related fields.  Among them are:
 ergodic  theory and symbolic dynamics, representation theory, classification
problems,  analysis and random walk in network models, etc.

In the present article, we use Bratteli diagrams to study ergodic invariant
measures of Cantor dynamical systems $(X, T)$ generated by a single
homeomorphism of a Cantor set $X$. Our approach  is based on the
existence of Bratteli-Vershik models  for aperiodic
(minimal) homeomorphisms of Cantor sets.
Loosely speaking, every aperiodic homeomorphism $T$ of a Cantor set $X$
is conjugate to a homeomorphism $\varphi_B$ (called the Vershik map)
of a path space $X_B$ of a Bratteli diagram $B$. This means that 
 all properties of a dynamical system can be seen in terms of the corresponding  Bratteli diagram.

This concept was successfully realized in the series of remarkable  papers
\cite{Vershik1981, Vershik1982, HermanPutnamSkau1992,
 GiordanoPutnamSkau1995} in the context of  ergodic theory and minimal
 Cantor dynamics.  Later on,
this approach was extended to Borel dynamical systems and aperiodic
homeomorphisms of a Cantor set in
\cite{BezuglyiDooleyKwiatkowski2006, Medynets2007}.

The idea to study  the Vershik map $\varphi_B$ defined on the path
space $X_B$ of the corresponding Bratteli diagram  proved to be very useful
and productive. We recall that, first of all, this approach allowed to classify
 minimal  homeomorphisms of a Cantor set up to orbit equivalence
  \cite{GiordanoPutnamSkau1995, GiordanoMatuiPutnamSkau2010}.
 Furthermore, it turns out that the structure  of a Bratteli diagram makes it
 possible to see  distinctly on the diagram several important invariants of
 a homeomorphism. They are,  for example, the set of minimal components,
  the support of any ergodic measure
 $\mu$,  and the values of  $\mu$  on clopen sets. We discussed various
 aspects of this  method in a series of papers
 \cite{BezuglyiKwiatkowskiMedynetsSolomyak2010, BezuglyiKarpel2011,
 BezuglyiKwiatkowskiMedynetsSolomyak2013, BezuglyiHandelman2014,
 BezuglyiKwiatkowskiYassawi2014, BezuglyiJorgensen2015,
   BezuglyiKarpel2016}. Other important references on Bratteli diagrams in
   Cantor   dynamics are \cite{DurandHostSkau1999, Durand2010,
   GjerdeJohansen2000, Cortez2006, HamachiKeaneYuasa2011,
   KwiatkowskiWata2004,  Putnam2010, Skau2000}.

Let $(X, T)$ be a Cantor dynamical system. The question about a complete
 (or even partial) description of the simplex
$\mathcal{M}_1(T)$ of invariant probability measures for  $(X, T)$ is
one of the most important in ergodic theory. It has
a long history and many remarkable results. By the Kakutani-Markov
 theorem,
this simplex is always non-empty. Ergodic invariant measures form extreme
points of the simplex. The cardinality of the set of ergodic measures is an
important invariant of dynamical systems. The study of relations
 between the properties  of the simplex $\mathcal{M}_1(T)$ and those of
  the  dynamical system $(X,T)$ is a hard and intriguing problem. There is
an extensive list of references regarding this problem, we mention here only
 the books  \cite{Phelps2001, Glasner2003} and the papers
  \cite{Downarowicz1991,  Downarowicz2006, Downarowicz2008} for
  further  citations.

It is worth  pointing  out two important facts that are constantly used in
the paper. Firstly, we consider only  Bratteli-Vershik models to study  the
simplex $\mathcal{M}_1(T)$ of probability invariant measures. More
precisely, we work  in a more general setting considering
probability measures invariant with respect to the
\textit{tail equivalence relation} $\mathcal E$ on the path space $X_B$ of
a Bratteli diagram. We denote this set by $\mathcal{M}_1(B)$. It is not
hard to see that such measures are also invariant with
respect to the Vershik map acting on $X_B$ 
\cite{BezuglyiKwiatkowskiMedynetsSolomyak2010} if the diagram $B$ 
admits a Vershik map (see \cite{Medynets2007}, 
\cite{BezuglyiKwiatkowskiYassawi2014}, \cite{BezuglyiYassawi2017} for 
more details). Hence,  this approach
includes the case of dynamical systems generated by a transformation.
Moreover, it can be used  not only for homeomorphisms of  a Cantor set
but for measure preserving automorphisms of a standard measure space.
Secondly, the structure of a Bratteli diagram is completely described by the 
 sequence of matrices $(\tl F_n)$
 with non-negative integer entries. Such matrices are called \textit{incidence
 matrices}. Knowing entries of the incidence matrices, one can determine
 the number $h^{(n)}_v$ of all finite paths between the top of the diagram
 and any vertex $v$ of the level $n$: they are obtained as entries of
  the products of  incidence matrices. This information can be combined
together in the sequence $(F_n)$  of \textit{stochastic incidence matrices}
whose entries are
  $$
  f_{vw}^{(n)} = \frac{\tl f_{vw}^{(n)} h^{(n)}_w} {h^{(n+1)}_v},
  $$
  where $\tl f_{vw}^{(n)}$ are entries of the incidence matrix
  $\tl F_n$.
It turns out that namely these matrices $(F_n)$  carry the information
needed to describe the structure of the  simplex $\mathcal{M}_1(B)$.

In this paper, we focus mostly on the following problem: given a Bratteli
diagram $B$, determine the exact number of ergodic probability measures 
on $B$ which are invariant with respect to the tail equivalence relation
 $\mathcal E$. This problem
includes also the case of uniquely ergodic homeomorphisms for which
there exists just one ergodic probability measure. In our earlier papers, we
proved a number of results for stationary (i.e., all incidence matrices are
equal) and finite rank (i.e., the size of all incidence matrices is bounded)
Bratteli diagrams which  give  partial answers about the structure of the 
set $\mathcal{M}_1(B)$, see
\cite{BezuglyiKwiatkowskiMedynetsSolomyak2010,
BezuglyiKwiatkowskiMedynetsSolomyak2013, 
AdamskaBezuglyiKarpelKwiatkowski2016}. Especially important for us 
is Theorem \ref{Theorem_measures_general_case} (see below) that was
 proved in  \cite{BezuglyiKwiatkowskiMedynetsSolomyak2010}.  
 We recall that in the case of
stationary and finite rank Bratteli diagrams the simplex $\mathcal{M}_1(B)$
is finite-dimensional, and the number of ergodic measures cannot exceed the
rank of $B$.

Our main results of this paper are given in Theorems  \ref{uniq_erg},
 \ref{main1}, and
 \ref{main_gen_case}.  Theorem  \ref{uniq_erg}  contains a  criterion for
 unique ergodicity of a
  Bratteli diagram. This condition is  formulated in terms of the stochastic
 matrices $F_n$ associated with a Bratteli diagram.  The theorem 
 asserts that $B$ is uniquely ergodic if and only if
$$
\lim_{n\to \infty}\ \max_{v, v' \in V_{n+1}} \left(\sum_{w \in V_n}
 \left|f_{vw}^{(n)} - f_{v'w}^{(n)}\right| \right) =0.
$$
Another sufficient condition for unique ergodicity of a diagram $B$
 is given in Theorem \ref{1.6a}. 
In Theorem \ref{main1}, we consider the case when a finite rank $k$ Bratteli
 diagram $B$
has exactly $l, 1\leq l \leq k,$ ergodic measures. It turns out that,
in this case,  one can clarify the structure of the diagram $B$ and describe
subdiagrams that support ergodic measures. Theorem \ref{main_gen_case}
focuses on the question about finiteness of measure extension from a
subdiagram. More precisely,
we find conditions that guarantee the finiteness of measure extension
from a ``chain subdiagram''. It gives a description  of ergodic finite
measures
on $B$ obtained by extension from  uniquely ergodic subdiagrams.

The outline of  the paper is as follows. In Section \ref{sect 2} we give 
necessary definitions and notation. We also prove a few preliminary 
statements which are based on Theorem 
\ref{Theorem_measures_general_case} and the construction of 
Kakutani-Rokhlin towers related to a Bratteli diagram. 
Section \ref{Section 3} contains the proof of 
Theorem \ref{uniq_erg}. We also included in this section  Example  
\ref{ex 3.2} which shows the importance of the telescoping procedure 
for the study of invariant measures. Sections \ref{sect 4} and \ref{sect 5}
 form the core of our work. The proofs of our main results, Theorems
\ref{main1} and \ref{main_gen_case}, are given in these two sections. 
We should remark that the proofs are rather long and  difficult. They are
based on the technique developed in our previous papers, see e.g. 
\cite{BezuglyiKwiatkowskiMedynetsSolomyak2013} and 
\cite{AdamskaBezuglyiKarpelKwiatkowski2016}.

The last two sections are devoted to some applications. In Section 
\ref{sect 6} we consider several particular cases. Firstly, we show that, 
in case of stationary Bratteli diagrams, the description of invariant 
measures given in 
\cite{BezuglyiKwiatkowskiMedynetsSolomyak2010}
can be included in the scheme elaborated in Sections \ref{sect 5} and   
\ref{sect 6}. In two other subsections of Section \ref{sect 6}, we
 discuss the Pascal-Bratteli diagram and the class of simple Bratteli 
 diagrams with countably many 
 ergodic invariant measures. We remark that it is not hard to see that
  every  Bratteli-Vershik
diagram,  considered in Subsection~\ref{subsection_cntbl_erg_meas},
 has zero topological entropy. It follows from the N.~Frantzikinakis
and B.~Host result \cite{FrantzikinakisHost2017} that a class of topological
dynamical systems constructed in Subsection~\ref{subsection_cntbl_erg_meas} satisfy the logarithmic Sarnac conjecture.
The last  section  contains an interpretation of our results 
 in terms of symbolic dynamics.

\section{Basics on Bratteli diagrams and invariant measures}\label{sect 2}

In this section, we give necessary definitions, notation, and prove some
results about the structure of the set of invariant measures which are used
 in the paper below.

\subsection{Main definitions and notation}

\begin{defin}
 A {\it Bratteli diagram} is an infinite graph $B=(V,E)$ such that the
  vertex set  $V =\bigcup_{i\geq 0}V_i$ and the edge set
  $E = \bigcup_{i\geq 0}E_i$ are   partitioned into disjoint subsets
  $V_i$ and $E_i$ where

(i) $V_0=\{v_0\}$ is a single point;

(ii) $V_i$ and $E_i$ are finite sets, $\forall i \geq 0$;

(iii) there exist $r : E \to V$ (range map $r$) and
$s : E \to V$ (source map  $s$), both from $E$ to $V$, such that
 $r(E_i)= V_{i+1}$, $s(E_i)= V_{i}$,
  and $s^{-1}(v)\neq\emptyset$, $r^{-1}(v')\neq\emptyset$ for all
   $v\in V$  and $v'\in V\setminus V_0$.

\end{defin}

The set of vertices $V_i$ is called the \textit{$i$-th level} of the
diagram $B$. A finite or infinite sequence of edges $(e_i : e_i\in E_i)$
such that $r(e_{i})=s(e_{i+1})$ is called a {\it finite} or {\it infinite
path}, respectively. For $m<n$, $v\, \in V_{m}$ and $w\,\in V_{n}$,
let $E(v,w)$ denote the set of all paths $e(v, w) = (e_{1},\ldots,
e_{p})$ with $s(e) = s(e_{1})=v$ and $r(e) = r(e_{p})=w$.

For a Bratteli diagram $B$,
let $X_B$ be the set of all infinite paths beginning at the top vertex
 $v_0$.
 We endow $X_B$ with the topology generated by cylinder sets
$[e]$ where $e = (e_0, ... , e_n)$,  $n \in \mathbb N$, and
$[e]:=\{x\in X_B : x_i=e_i,\; i = 0, \ldots, n\}$. With this topology,
$X_B$ is a  0-dimensional compact metric space.
\textit{By assumption}, we will consider only such Bratteli diagrams $B$ for
which $X_B$ is a {\em Cantor set}, that is $X_B$ has no isolated
 points. 

\begin{remark}\label{defin_and_notat} We will use the following definitions and notation
 throughout  the  paper.
\begin{itemize}

\item By $\ov x$, we denote a (column) vector in
$\mathbb{R}^{k}$. We will always denote the coordinates of $\ov x$ using
 the subscript.
For instance, the notation $\ov x(w) = (x_v(w)) $ means that we have a
set of vectors
 enumerated by $w$, and the $v$-coordinate of  $\ov x(w) $ is $x_v(w)$.
By $||\ov x||$ we  denote the Euclidean norm of $\ov x$.

\item The notation $A^T$ is used for the transpose of $A$, and
$(x_1, ... , x_k)^T$ means the column vector $\ov x$ with entries
 $x_i$ in the standard basis in $\mathbb R^k$.

\item For $\ov x = (x_1, ... , x_k)^T, \ov y = (y_1, ... , y_k)^T \in
\mathbb{R}^k$, define the metric
$$
d^*(\ov x, \ov y) = \sum_{i = 1}^k |x_i - y_i|.
$$
Let $d$ denote the metric on $\mathbb R^k$ generated by the Euclidean
norm $||\cdot||$. Clearly, the two metrics, $d^*$ and $d$, are equivalent.

\item Let $x =(x_n)$ and $y =(y_n)$  be two paths from $X_B$. It is
 said that $x$ and $y$ are {\em tail equivalent} (in symbols,  $(x,y)
  \in   \mathcal E$)  if there exists some $n$ such that $x_i = y_i$ for
   all  $i\geq n$.  An $\mathcal E$-orbit of a path $x$ is the set of all infinite paths $y$ which are tail equivalent to $x$.
   The diagrams with infinite $\mathcal E$-orbits are
    called {\em aperiodic}.  { \it Throughout the paper we will consider Bratteli diagrams whose tail equivalence relation is aperiodic.}
    Note that a Bratteli diagram is \textit{simple}
    if the  tail equivalence relation   $\mathcal E$ is minimal, see Definition
    \ref{rank_d_definition} below.

\item For a Bratteli diagram  $B = (V,E)$,  the incidence matrices $\tl
 F_n   = (\tl  f_{v,w}^{(n)})$, where  $v \in V_{n+1}, w \in V_n$, are
  formed by
  entries that indicate the number of edges between vertices $v \in
   V_{n+1}$ and $ w \in V_n$. We reserve the notation $F_n$ and
   $ f_{v,w}^{(n)}$ for the    corresponding stochastic matrices (see
    below).

\item
Let $B$ be a Bratteli diagram, and let $n_0 = 0 < n_1< n_2 < \ldots$
be a strictly increasing sequence of integers. The {\em telescoping of
 $B$ to $ (n_k)$} is the Bratteli diagram $B' = (V', E')$ whose $k$-level
vertex  set $V_k'$ is  $V_{n_k}$ and whose incidence matrices $(\tl F_k')$
 are defined by
   \[
   \tl F_k'= \tl F_{n_{k+1}-1} \cdots \tl F_{n_k},
   \]
The operation of telescoping preserves the space $X_B$,
the tail equivalence relation, and the set of invariant measures.

\item Denote by $\ov h^{(n)} = (h_v^{(n)} : v \in V_n)$ the  vector of
 heights of  Kakutani-Rokhlin towers \cite{GiordanoPutnamSkau1995,
  HermanPutnamSkau1992}, i.e.,   $h_v^{(n)} = |E(v_0, v)|$ where $|A|$
  denotes the  cardinality of a set $A$. Here $h^{(1)}_{v}$ is
   the number of    edges   between $v_0$ and $v \in V_1$.   It
   follows from the definition of a  Bratteli diagram that
\begin{equation}\label{formula for heights}
\tl F_n \ov h^{(n)} = \ov  h^{(n+1)}, \ \ n \geq 1,
\end{equation}

\item We will constantly use the sequence of row stochastic  matrices
$(F_n)$  whose entries are defined by the formula
\begin{equation}\label{stoch_inc_matr}
f_{vw}^{(n)} = \frac{\tl f_{vw}^{(n)} h_w^{(n)}}{h_v^{(n+1)}}.
\end{equation}
In general, it is difficult to compute the elements of the matrix $F_n$ explicitly because the terms $h_w^{(n)}$ used in the formula (\ref{stoch_inc_matr}) are the entries of the product of matrices. In Section~\ref{Section 3}, the reader can find Examples~\ref{ex 3.2}, \ref{non-simple-stat-ex} of Bratteli diagrams, for which stochastic incidence matrices are computed explicitly. 
  \end{itemize}
\end{remark}

\begin{defin}\label{rank_d_definition} Let $B = (V, E)$ be a
Bratteli diagram.

\begin{enumerate}
\item
We say that $B$ has \textit{finite rank} if there exists some
 $k\in \mathbb N$ such that $|V_n| \leq k$  for all $n\geq 1$.

\item
For a finite rank diagram $B$, we  say that $B$ has
\textit{rank $d$} if  $d$ is the smallest integer such that $|V_n|=d$
 for infinitely many $n$.

\item
We say that $B$ is {\em simple} if for any level
$n$ there is $m>n$ such that $E(v,w) \neq \emptyset$ for all $v\in
V_n$ and $w\in V_m$. Otherwise, $B$ is called {\em non-simple}.

\item
 We say that $B$ is  \textit{stationary} if $\tl F_n = \tl F_1$  for all $n\geq
  2$.
\end{enumerate}
\end{defin}

Note that a Cantor dynamical system $(X, \varphi)$ can be represented, in general, by Bratteli diagrams of different ranks, see Section \ref{Section7}. Throughout this paper we fix a Bratteli diagram and study the number of ergodic invariant measures for the system in terms of this diagram. Since the number of ergodic measures for $(X, \varphi)$ does not depend on the representation, our results will hold if we start with another Bratteli diagram which represents the system.

\subsection{Invariant measures on Bratteli diagrams}

In this paper, we study only positive Borel measures on $X_B$ which
are invariant  with respect to the tail equivalence relation $\mathcal{E}$.
In this subsection, we show that the set of all probability invariant measures
on a Bratteli diagram
corresponds to the inverse limit of a decreasing  sequence of convex sets.
Let $\mu$ be a Borel probability non-atomic $\mathcal E$-invariant
  measure on $X_B$ (for brevity, we will use the term
  ``measure on $B$'' below). We denote the set of all such measures
   by $\mathcal{M}_1(B)$.
   The   fact that $\mu$ is an
   $\mathcal E$-invariant measure
 means that  $\mu([e]) = \mu([e'])$ for any two cylinder sets
 $e,e' \in  E(v_0, w)$, where $w \in  V_n$ is an arbitrary vertex, and
 $n \geq1$.   Since any measure on $X_B$ is completely determined
  by its  values  on  clopen (even cylinder) sets, we conclude that in order to
  define an  $\mathcal E$-invariant measure $\mu$, one needs to
   know the  sequence   of vectors  $\ov p^{(n)} = (p_w^{(n)} :
   w \in V_n), n  \geq 1$,  such that $p_w^{(n)} =  \mu([e(v_0,w)])$
    where  $e(v_0,w)$ is a finite path
 from $E(v_0,w)$. It is clear that, for $w \in V_n$,
\begin{equation}\label{eq path extension}
[e(v_0, w)] = \bigcup_{e(w, v), v \in V_{n+1}} [e(v_0, w), e(w, v)]
\end{equation}
so that $[e(v_0, v)] \subset [e(v_0, w)]$.

In other words, we see that the entries of  vectors $\ov p^{(n)}$ can
 be  found by the formula
$$
p_w^{(n)} = \frac{\mu(X_w^{(n)})}{h_w^{(n)}}
$$
where
\begin{equation}\label{eq tower X}
X_w^{(n)} = \bigcup_{e \in E(v_0,w)} [e], \ \ w \in V_n.
\end{equation}
This means that $X_w^{(n)}$ is a tower in the Kakutani-Rokhlin
  partition   that   corresponds to the vertex $w$
   \cite{HermanPutnamSkau1992}.  The
  measure of this tower is
 \begin{equation}\label{eq tower measure}
  \mu(X_w^{(n)}) = h_w^{(n)}p_w^{(n)} =: q_w^{(n)}.
 \end{equation}
 Then relation (\ref{eq path extension}) implies that
\begin{equation}\label{formula for measures}
\tl F_n^T \ov p^{(n+1)} = \ov p^{(n)}, \ n\geq 1,
\end{equation}
where  $\tl F_n^T$ denotes the transpose of the matrix $\tl F_n$.

Because $\mu(X_B) =1$, we see that,  for any $n >1$,
$$
\sum_{w \in V_n} h_w^{(n)}p_w^{(n)} = \sum_{w \in V_n} q_w^{(n)} = 1.
$$

The following lemma is formulated in terms of the stochastic incidence matrices $F_n$
 (see~(\ref{stoch_inc_matr})).
\begin{lemma}\label{formula_for_qn}
Let $\mu$ be a probability measure on the path
 space $X_B$ of a Bratteli diagram $B$. Let $(F_n)$ be a sequence
 of corresponding stochastic incidence matrices. Then, for every $n
  \geq 1$, the vector $\ov q^{(n)} = ( q^{(n)}_v : v \in
   V_n)$ (see (\ref{eq tower measure})) is a  probability vector  such that
 \begin{equation}\label{eq stoch matrix and measures}
 F_n^T \ov q^{(n+1)} = \ov q^{(n)}, \ \ \ \ n  \geq 1.
 \end{equation}
\end{lemma}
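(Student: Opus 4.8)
The plan is to verify both assertions by direct computation, relying on the definition (\ref{stoch_inc_matr}) of the stochastic entries $f_{vw}^{(n)}$ together with the already-established recursion (\ref{formula for measures}) for the probability vectors $\ov p^{(n)}$. No new analytic input is needed; everything reduces to unwinding definitions and observing a cancellation of heights, so I expect the argument to be a short algebraic verification rather than anything involving a genuine obstacle.

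First I would dispose of the claim that $\ov q^{(n)}$ is a probability vector. Each entry $q_w^{(n)} = \mu(X_w^{(n)})$ is nonnegative as the measure of a Borel set, and the normalization $\sum_{w \in V_n} q_w^{(n)} = \mu(X_B) = 1$ is exactly the identity $\sum_{w \in V_n} h_w^{(n)} p_w^{(n)} = 1$ recorded just before the lemma. Since the towers $X_w^{(n)}$ are pairwise disjoint and cover $X_B$, nothing further is required here.

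For the recursion (\ref{eq stoch matrix and measures}) I would fix a vertex $w \in V_n$ and expand the $w$-th coordinate of $F_n^T \ov q^{(n+1)}$. Because $(F_n^T)_{wv} = f_{vw}^{(n)}$, this coordinate equals $\sum_{v \in V_{n+1}} f_{vw}^{(n)} q_v^{(n+1)}$. Substituting $f_{vw}^{(n)} = \tl f_{vw}^{(n)} h_w^{(n)} / h_v^{(n+1)}$ from (\ref{stoch_inc_matr}) and $q_v^{(n+1)} = h_v^{(n+1)} p_v^{(n+1)}$ from (\ref{eq tower measure}), the factors $h_v^{(n+1)}$ cancel, and the sum collapses to $h_w^{(n)} \sum_{v \in V_{n+1}} \tl f_{vw}^{(n)} p_v^{(n+1)}$. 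The remaining sum is precisely the $w$-th coordinate of $\tl F_n^T \ov p^{(n+1)}$, which by (\ref{formula for measures}) equals $p_w^{(n)}$. Hence the coordinate equals $h_w^{(n)} p_w^{(n)} = q_w^{(n)}$, which is the desired identity coordinatewise.

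The only point demanding care is the index bookkeeping in the two transposes: both $\tl F_n^T$ and $F_n^T$ send vectors indexed by $V_{n+1}$ to vectors indexed by $V_n$, and the stochastic entry $f_{vw}^{(n)}$ carries the lower-level height $h_w^{(n)}$ in its numerator while $h_v^{(n+1)}$ sits in the denominator. It is exactly this placement that lets the denominator cancel against the factor $h_v^{(n+1)}$ hidden inside $q_v^{(n+1)}$, so that passing from the incidence relation (\ref{formula for measures}) to the stochastic relation (\ref{eq stoch matrix and measures}) amounts to multiplying through by $h_w^{(n)}$. I do not anticipate any real difficulty beyond keeping these conventions straight.
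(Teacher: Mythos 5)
Your proof is correct and follows exactly the route the paper intends: the paper's own proof is a one-line appeal to the definition of $F_n$ and relations (\ref{formula for heights})--(\ref{formula for measures}), and your coordinatewise computation with the cancellation of $h_v^{(n+1)}$ is precisely the verification being alluded to. Nothing is missing.
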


\begin{proof}
This fact   follows from the definition of the stochastic matrix $F_n$
 and relations (\ref{formula for heights}) - (\ref{formula for
 measures})
\end{proof}

We see that the formula in (\ref{formula for measures}) is a necessary
condition for a sequence of vectors $(\ov p^{(n)})$ to be defined by an
  invariant  probability measure. It turns out that the  converse
   statement is  true, in general. We formulate below Theorem
    \ref{Theorem_measures_general_case}  (proved in
\cite{BezuglyiKwiatkowskiMedynetsSolomyak2010}), where all
 $\mathcal E$-invariant measures are explicitly
described.

Using Lemma~\ref{formula_for_qn},  we
 define a decreasing sequence of convex polytopes $\Delta_m^{(n)}$,
 $n,m \geq 1$, and the limiting convex sets $\Delta_{\infty}^{(n)}$. They
 are used  to describe the set  $\mathcal{M}_1(B)$ of all probability
 $\mathcal{E}$-invariant measures on $B$. Namely, denote
$$
\Delta^{(n)} := \{(z^{(n)}_w)_{w \in V_n}^T : \sum_{w \in V_n}
z^{(n)}_w = 1\ \mbox{ and } \ z^{(n)}_w   \geq 0, \  w \in V_n \}.
$$
The sets $\Delta^{(n)}$ are standard simplices in the space
$\mathbb{R}^{|V_n|}$ with $|V_n|$
 extreme points $\{\ov e^{(n)}(w) : w \in V_n\}$, where
$\ov e^{(n)}(w) = (0, ... , 0, 1, 0,...0)^T$ is the standard basis vector, i.e.
$e^{(n)}_u(w) = 1$ if and only if $u = w$.
Since $F_n$ is a stochastic matrix, we have the obvious property that
 $$
 F_{n}^T(\Delta^{(n+1)}) \subset \Delta^{(n)}, \quad n \in \mathbb{N}.
 $$

Let $\mu$ be  a probability $\mathcal E$-invariant measure $\mu$
on $X_B$ with values $q_w^{(n)}$ on the towers $X_w^{(n)}$.
Then $(q_w^{(n)} : w \in V_n)^T$ lies in  the simplex
 $\Delta^{(n)}$.
Set
\begin{equation}\label{eq def Delta^n_m}
\Delta_m^{(n)} = F_n^T \cdot \ldots \cdot
F_{n+m-1}^T(\Delta^{(n+m)})
\end{equation}
for $m = 1,2,\ldots$
Then we see that
$$
\Delta^{(n)} \supset \Delta^{(n)}_1 \supset \Delta^{(n)}_2 \supset \ldots
$$
Denote
\begin{equation}\label{eq convex set Delta}
\Delta_{\infty}^{(n)} = \bigcap_{m=1}^{\infty}\Delta_m^{(n)}.
\end{equation}
It follows from  (\ref{eq def Delta^n_m}) and  (\ref{eq convex set Delta})
 that
\begin{equation}\label{eq F_n relates Delta}
F_n^T(\Delta_{\infty}^{(n +1)}) = \Delta_{\infty}^{(n)}, \quad n \geq 1.
\end{equation}

The next theorem, that was proved in
\cite{BezuglyiKwiatkowskiMedynetsSolomyak2010}, describes
all $\mathcal E$-invariant probability measures. We formulate here a
slightly stronger version of this statement.

\begin{thm}
\label{BKMS_measures=invlimits}
 \label{Theorem_measures_general_case}  Let $B = (V,E)$ be a
 Bratteli diagram with the sequence of stochastic incidence matrices
 $(F_n)$, and let $\mathcal M_1(B)$ denote the set of
 $\mathcal E$-invariant probability measures on the path space
 $X_B$.

(1) If  $\mu \in \mathcal M_1(B)$, then the probability vector
$$
\ov q^{(n)}= (\mu(X_w^{(n)}))_{w\in V_n}
$$
satisfies the following conditions for $n\geq 1$:

(i) $$\ov q^{(n)} \in  \Delta_{\infty}^{(n)},$$

(ii) $$F^{T}_n\ov q^{(n+1)} = \ov q^{(n)},$$
where $X_w^{(n)}$ is defined in (\ref{eq tower X}).

Conversely, suppose that $\{\ov q^{(n)}\}$ is a sequence of
non-negative probability vectors such that, for every
 $\ov q^{(n)}= (q^{(n)}_w)_{w\in V_n} \in  \Delta^{(n)}$ ($n\geq 1$),
 the condition  (ii) holds. Then the vectors  $\ov q^{(n)}$ belong to
 $\Delta_{\infty}^{(n)}$, $n \in \mathbb N$, and   there  exists a uniquely
  determined $\mathcal E$-invariant  probability measure $\mu$ such that
   $\mu(X_w^{(n)})= q_w^{(n)}$   for $w\in V_n, n \in \mathbb N$.

(2) Let $\Omega$ be the subset of the infinite product
$\prod_{n\geq 1} \Delta^{(n)}_\infty$ consisting of sequences
$(\ov q^{(n)})$ such that $F^{T}_n\ov q^{(n+1)} = \ov q^{(n)}$. Then
 the map
$$
\Phi : \mu \ \mapsto \ (\ov q^{(n)}) : \mathcal M_1(B)  \mapsto
\Omega,
$$
is an affine isomorphism. Moreover, $\Phi(\mu)$ is an extreme point of
$\Omega$ if and only if $\mu$ is ergodic.

(3) Let $B$ be a Bratteli diagram of rank $K$. Then the number of ergodic invariant
 measures on $B$ is
 bounded  above by $K$ and bounded below by the dimension of  the finite-dimensional simplex
 $\Delta^{(1)}_\infty$.
\end{thm}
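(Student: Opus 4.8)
The plan is to phrase everything in the convex‑geometric language of part (2): via the affine isomorphism $\Phi$, the ergodic measures are exactly the extreme points of the compact convex set $\Omega$, so it suffices to bound $\#\,\mathrm{ext}(\Omega)$ above by $K$ and below by $\dim\Delta^{(1)}_\infty$. Here $\Omega$ is compact and convex, being the closed (it is cut out by the continuous affine relations $F_n^T\ov q^{(n+1)}=\ov q^{(n)}$) convex subset of the product $\prod_n\Delta^{(n)}_\infty$ of compact convex sets. The key structural input is (\ref{eq F_n relates Delta}): each bonding map $F_n^T\colon\Delta^{(n+1)}_\infty\to\Delta^{(n)}_\infty$ is surjective, so $\Omega$ is an inverse limit of compact convex sets with surjective affine bonding maps, and therefore every projection $\pi_n\colon\Omega\to\Delta^{(n)}_\infty$, $(\ov q^{(m)})\mapsto\ov q^{(n)}$, is a surjective affine map.

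For the lower bound I would pull extreme points back along $\pi_1$. If $y$ is an extreme point of $\Delta^{(1)}_\infty$, then $\pi_1^{-1}(y)$ is a nonempty compact convex \emph{face} of $\Omega$: if a point of $\pi_1^{-1}(y)$ is the midpoint of a segment in $\Omega$, applying $\pi_1$ and using extremality of $y$ forces the whole segment into $\pi_1^{-1}(y)$. By Krein--Milman this face has an extreme point, which is then an extreme point of $\Omega$ lying over $y$, and distinct $y$ receive distinct such points. Hence $\#\,\mathrm{ext}(\Omega)\ge\#\,\mathrm{ext}(\Delta^{(1)}_\infty)$. Since a compact convex set of affine dimension $d$ has at least $d+1$ extreme points, this yields $\#\,\mathrm{ext}(\Omega)\ge\dim\Delta^{(1)}_\infty+1$, which is the asserted lower bound.

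For the upper bound I would first telescope $B$ to the subsequence of levels on which $|V_n|=K$; telescoping preserves $X_B$, $\mathcal E$ and $\mathcal M_1(B)$, hence the number of ergodic measures, so we may assume $|V_n|=K$ for all $n$. The core claim is that each $\Delta^{(n)}_\infty$ has at most $K$ extreme points. Writing $L_{n,m}=F_n^T\cdots F_{n+m-1}^T$, the set $\Delta^{(n)}_m=L_{n,m}(\Delta^{(n+m)})$ of (\ref{eq def Delta^n_m}) is the convex hull of the $K$ points $P^{(m)}_w:=L_{n,m}(\ov e^{(n+m)}(w))$, $w\in V_{n+m}$, all lying in the compact simplex $\Delta^{(n)}$. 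Choosing a diagonal subsequence $m_j$ along which each of the $K$ sequences $(P^{(m_j)}_w)_j$ converges, say $P^{(m_j)}_w\to Q_w$, I would verify that $\Delta^{(n)}_\infty=\mathrm{conv}\{Q_w:w\in V_n\}$: the inclusion $\supseteq$ holds since each $Q_w$ is a limit of points of the nested closed sets $\Delta^{(n)}_m$, and $\subseteq$ holds because any $x\in\Delta^{(n)}_\infty\subseteq\Delta^{(n)}_{m_j}$ is a convex combination of the $P^{(m_j)}_w$ with coefficients in the compact standard simplex, whose subsequential limit rewrites $x$ over the $Q_w$. Thus $\Delta^{(n)}_\infty$ is a polytope with at most $K$ vertices. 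To convert this into a bound on $\Omega$, note that by (\ref{eq F_n relates Delta}) the dimensions $d_n=\dim\Delta^{(n)}_\infty$ are non‑decreasing and bounded by $K-1$, hence eventually constant, $d_n=d$ for $n\ge n_0$; for such $n$ the surjective affine map $F_n^T\colon\Delta^{(n+1)}_\infty\to\Delta^{(n)}_\infty$ is a map between convex sets of equal dimension $d$, so it is injective on affine hulls and therefore an affine isomorphism. Consequently $\pi_{n_0}\colon\Omega\to\Delta^{(n_0)}_\infty$ is an affine isomorphism (coordinates above level $n_0$ are recovered by inverting the bonding isomorphisms, those below by applying the forward maps), giving $\#\,\mathrm{ext}(\Omega)=\#\,\mathrm{ext}(\Delta^{(n_0)}_\infty)\le K$.

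I expect the main obstacle to be exactly the claim that the descending intersection $\Delta^{(n)}_\infty=\bigcap_m\Delta^{(n)}_m$ has only finitely many (at most $K$) extreme points: a priori a nested intersection of polytopes, even with a uniformly bounded vertex count, need not obviously have a bounded number of extreme points, and one must rule out the creation of new extreme points in the limit. The compactness/diagonal argument resolves this by realizing $\Delta^{(n)}_\infty$ \emph{itself} as the convex hull of the $K$ limit vertices $Q_w$, rather than treating it as an abstract intersection. Once this finiteness is secured, the dimension‑stabilization step reducing $\Omega$ to a single $\Delta^{(n_0)}_\infty$ is routine linear algebra.
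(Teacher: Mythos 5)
Your proposal addresses only part (3) of the theorem, and there it is essentially correct: realizing $\Delta^{(n)}_\infty$ as the convex hull of the $K$ diagonal limits $Q_w$ of the prelimit vertices is exactly the right device for ruling out new extreme points in the nested intersection; the face/Krein--Milman argument pulling extreme points of $\Delta^{(1)}_\infty$ back through the surjective projection $\pi_1$ gives the lower bound; and the dimension-stabilization step is sound, since a surjective affine map between compact convex sets of equal finite dimension is an isomorphism on their affine hulls. (For comparison: the paper itself does not prove this theorem but imports it from \cite{BezuglyiKwiatkowskiMedynetsSolomyak2010}, where the polytope-limit argument for $\Delta^{(n)}_\infty$ is carried out in essentially the same way, going back to Pullman's work on nested polytopes.)

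The genuine gap is that parts (1) and (2) are assumed rather than proved, and they are the measure-theoretic substance on which your convex-geometric reduction rests. Concretely, you would still need: (a) the converse half of (1) --- that a compatible sequence $(\ov q^{(n)})$ with $F_n^T\ov q^{(n+1)}=\ov q^{(n)}$ lies in $\Delta^{(n)}_\infty$ (easy, since $\ov q^{(n)}=F_n^T\cdots F_{n+m-1}^T\ov q^{(n+m)}\in\Delta^{(n)}_m$ for every $m$) and, more substantially, determines a unique $\mathcal E$-invariant Borel probability measure; this requires checking via (\ref{eq path extension}) that the assignment $[e(v_0,w)]\mapsto q^{(n)}_w/h^{(n)}_w$ is a consistent, finitely additive set function on the algebra of cylinder sets and extending it by compactness of $X_B$ to a Borel measure; (b) that $\Phi$ is an affine bijection (injectivity because cylinder sets generate the Borel $\sigma$-algebra, surjectivity from (a)); and (c) the equivalence ``$\Phi(\mu)$ extreme in $\Omega$ iff $\mu$ ergodic,'' which needs the identification of ergodic measures with the extreme points of the simplex $\mathcal M_1(B)$ and the observation that an affine isomorphism of convex sets preserves extreme points. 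None of these steps appear in the proposal, so as written it is a correct proof of (3) conditional on (1) and (2), not a proof of the theorem.
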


\begin{remar}
(a) Recall that the set $\Delta_{\infty}^{(n)} = \bigcap_{m=1}^{\infty}
 \Delta_m^{(n)}$
is a convex subset of $\Delta^{(n)}$ for all $n \geq 1$.
It follows from (\ref{eq F_n relates Delta}) that  the following
sequence of maps is defined:
\begin{equation}\label{2.3a}
\Delta^{(1)}_{\infty} \stackrel{F_1^T}{\longleftarrow}
\Delta^{(2)}_{\infty} \stackrel{F^T_2}{\longleftarrow}
\Delta^{(3)}_{\infty} \stackrel{F^T_3}{\longleftarrow} \ldots
\end{equation}

By Theorem \ref{Theorem_measures_general_case}, the set
$\mathcal{M}_1(B)$ can be identified with the inverse limit of the
sequence $(F_n^T, \Delta^{(n)}_{\infty})$.
There is a one-to-one correspondence between  measures $\mu \in
\mathcal{M}_1(B)$ and  sequences of vectors $\ov q^{(n)} \in
\Delta_{\infty}^{(n)}$ such that $\ov q^{(n)} = F_n^T(\ov q^{(n+1)}),
n=1,2,\ldots$

(b) Sometimes, to shorten our notation, we will call the extreme points of a convex subset of
$\mathbb{R}^n$, $n \in \mathbb{N}$ the {\it vertices}. When it does not cause a confusion, these two notions will be used as synonyms.

(c) Theorem~\ref{BKMS_measures=invlimits} states that  in order to find
all ergodic  invariant measures on a diagram it suffices to know the
number of  extreme points of $\Delta^{(n)}_{\infty}$ for
 every $n$. Thus, in the statements which are  proved in this paper,
 we will fix any natural number  $n$ and investigate the corresponding
  convex set  $\Delta^{(n)}_{\infty}$.

(d) In general, the set $\Delta_\infty^{(n)}$ is a convex subset of the
$(|V_n| - 1)$-dimensional simplex $\Delta^{(n)}$. In some cases,
 which will be considered in Section~\ref{sect 4}, the set
 $\Delta_\infty^{(n)}$ is a finite-dimensional simplex itself.

(e)  In Theorem~\ref{Theorem_measures_general_case} part (2), the set
$\mathcal{M}_1(B)$ can be affinely isomorphic to the set
$\Delta_{\infty}^{(1)}$. For instance, it happens when all stochastic
incidence matrices are square non-singular matrices of the same dimension
$K \times K$ for some $K \in
\mathbb{N}$. This case will be considered in Section~\ref{sect 4}.

(f) The procedure of telescoping defined in Remark~\ref{defin_and_notat}
preserves the set of invariant measures; hence we can apply it when
necessary without loss of generality.
\end{remar}

\begin{example}[Equal row sums (ERS) Bratteli diagrams]
\label{ExampleERS1}We illustrate  Theorem~\ref{Theorem_measures_general_case}
 by a particular class
of Bratteli diagrams that have  the equal row sum (ERS) property.
This  means that the  incidence matrices $(\wt F_n)$ of a Bratteli diagram
$B$ satisfy the condition
  $$
  \sum_{w \in V_n} \tl f_{v,w}^{(n)} = r_n
  $$
  for every $v \in V_{n+1}$. In other words, we have $|r^{-1}(v)| = r_n$ for every $v \in V_{n+1}$. It is known that Bratteli-Vershik
   systems with the $ERS$ property can serve as models for
   Toeplitz subshifts   (see \cite{GjerdeJohansen2000}).
In particular, we have $\wt F_0 = \ov h^{(1)} = (r_0, \ldots, r_0)^T$.
 It follows from (\ref{formula for heights}) that, for ERS Bratteli diagrams,
  $h^{(n)}_w = r_0 \cdots r_{n-1}$ for every $w \in V_n$.
Thus, for every probability $\mathcal{E}$-invariant measure on $B$,
 we obtain
$$
\sum_{w \in V_n} p_w^{(n)} = \frac{1}{r_0 \cdots r_{n-1}}
$$
for $n = 1, 2, \ldots$ The proof of this fact follows easily from
 (\ref{formula for measures}) by induction.  Furthermore, in the case
 of ERS diagrams, we have
$$
f_{vw}^{(n)} = \frac{\tl f_{vw}^{(n)}}{r_n}.
$$
\end{example}

\subsection{Invariant measures in terms of decreasing sequences of polytopes}

In this subsection we  study the decreasing sequence of convex sets which
correspond  to probability invariant measures.
We  focus on  extreme points of these convex sets and on the description of
 these sets  as decreasing sequences of convex polytopes.

Let
$$
G_{(n+m,n)} = F_{n+m} \cdots F_{n}
$$
for $m \geq 0$ and $n \geq 1$.
Denote the elements of $G_{(n+m,n)}$ by $(g_{uw}^{(n+m,n)})$, where
$u \in V_{n+m+1}$ and $w\in V_{n}$; then
$$
g_{uw}^{(n+m,n)} = \sum_{(u_m, \ldots, u_1) \in V_{n+m}\times \ldots
 \times V_{n+1}} f^{(n+m)}_{u,u_m}\cdot f^{(n+m-1)}_{u_m,u_{m-1}}
 \cdots f^{(n)}_{u_1,w}.
$$
The sets $\Delta_m^{(n)}, m \geq 0$, defined in
(\ref{eq def Delta^n_m}),
form a decreasing sequence of convex polytopes in $\Delta^{(n)}$.
Recall that the vectors $\{\ov e^{(n)}(w) : w \in V_n\}$
form the standard basis in $\mathbb{R}^{|V_n|}$.
The extreme points of $\Delta_m^{(n)}$
are some (or all) vectors from the set $\{\ov g^{(n+m,n)}(v) :
v \in V_{n+m+1}\}$, where we denote
$$
\ov g^{(n+m,n)}(v) = (g_{w}^{(n+m,n)}(v))_{w \in V_n} =
G_{(m+n,n)}^T(\ov e^{(n+m+1)}(v)).
$$
Obviously, we have the relation
\begin{equation}\label{2.3}
\ov g^{(n+m,n)}(v) = \sum_{w \in V_n}g_{vw}^{(n+m,n)}\ov e^{(n)}(w).
\end{equation}
Let $\{\ov y^{(n,m)}(v)\}$ be the set of all extreme points of
$\Delta_m^{(n)}$.
Then $\ov y^{(n,m)}(v) = \ov g^{(n+m,n)}(v)$ for $v$ belonging
 to some subset $V_{n+m+1}^{(n)}$ of $V_{n+m+1}$.

We observe the following fact. Every vector $\ov q^{(n)} $ from
 the set $\Delta_{\infty}^{(n)}$ can be  written in the standard basis as
$$
\ov q^{(n)} = \sum_{w\in V_n}q^{(n)}_w \ov e^{(n)}(w).
$$
 It turns out that the numbers $q_v^{(n+m+1)}$, $v \in V_{n+m+1}$,
  are the coefficients in the convex decomposition of $\ov q^{(n)}$
  with respect to vectors $\ov g^{(n+m,n)}(v)$.

\begin{prop}\label{decomposition_q_n_by_q_n+m}
Let $\mu \in \mathcal{M}_1(B)$, and $q^{(n)}_w = \mu(X^{(n)}_w)$
($w \in V_n$) for all $n \in \mathbb{N}$. Then
\begin{equation}\label{2.3ab}
\ov q^{(n)} = \sum_{v \in V_{n+m+1}} q_v^{(n+m+1)} \ov
g^{(n+m,n)}(v).
\end{equation}
In particular,
$$
\ov q^{(1)} = \sum_{v \in V_{m+1}} q_v^{(m+1)} \ov g^{(m+1,1)}(v).
$$
\end{prop}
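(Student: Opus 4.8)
The plan is to iterate the one-step recursion $F_n^T \ov q^{(n+1)} = \ov q^{(n)}$ supplied by Lemma \ref{formula_for_qn} (see (\ref{eq stoch matrix and measures})) and then to recognize the resulting product of transposed stochastic matrices as a single transpose of $G_{(n+m,n)}$. Everything reduces to the definitions already set up, so no genuinely new ingredient is needed.

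First I would apply (\ref{eq stoch matrix and measures}) repeatedly, descending from level $n+m+1$ down to level $n$:
$$
\ov q^{(n)} = F_n^T \ov q^{(n+1)} = F_n^T F_{n+1}^T \ov q^{(n+2)} = \cdots = F_n^T F_{n+1}^T \cdots F_{n+m}^T \, \ov q^{(n+m+1)}.
$$
Since $(F_{n+m} \cdots F_{n+1} F_n)^T = F_n^T F_{n+1}^T \cdots F_{n+m}^T$ and, by definition, $G_{(n+m,n)} = F_{n+m} \cdots F_{n}$, this identity reads $\ov q^{(n)} = G_{(n+m,n)}^T \ov q^{(n+m+1)}$.

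Next I would expand $\ov q^{(n+m+1)}$ in the standard basis of $\mathbb{R}^{|V_{n+m+1}|}$, writing $\ov q^{(n+m+1)} = \sum_{v \in V_{n+m+1}} q_v^{(n+m+1)} \ov e^{(n+m+1)}(v)$, and then invoke linearity of the map $G_{(n+m,n)}^T$ together with the defining relation $\ov g^{(n+m,n)}(v) = G_{(n+m,n)}^T \ov e^{(n+m+1)}(v)$. This gives
$$
\ov q^{(n)} = \sum_{v \in V_{n+m+1}} q_v^{(n+m+1)} \, G_{(n+m,n)}^T \ov e^{(n+m+1)}(v) = \sum_{v \in V_{n+m+1}} q_v^{(n+m+1)} \, \ov g^{(n+m,n)}(v),
$$
which is exactly (\ref{2.3ab}); setting $n = 1$ then yields the displayed special case.

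I expect no serious obstacle: the statement is a direct consequence of Lemma \ref{formula_for_qn} combined with the elementary identity $(AB)^T = B^T A^T$. The one point requiring care is the bookkeeping of the multiplication order, since transposition reverses the factors; one must confirm that $F_n^T \cdots F_{n+m}^T$ is indeed $G_{(n+m,n)}^T$ for $G_{(n+m,n)} = F_{n+m}\cdots F_n$, and not the transpose of the product formed in the opposite order.
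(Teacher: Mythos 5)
Your proposal is correct and follows essentially the same route as the paper's own proof: iterate the relation $F_n^T \ov q^{(n+1)} = \ov q^{(n)}$ to obtain $\ov q^{(n)} = F_n^T \cdots F_{n+m}^T \,\ov q^{(n+m+1)}$, expand $\ov q^{(n+m+1)}$ in the standard basis, and use linearity together with the definition $\ov g^{(n+m,n)}(v) = G_{(n+m,n)}^T \ov e^{(n+m+1)}(v)$. The point you flag about the order reversal under transposition is handled correctly and matches the paper's conventions.
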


\begin{proof}
The result follows from the following computation:
\begin{align*}
\ov q^{(n)} &= F_n^{T} \cdots F^T_{n+m} (\ov q^{(n+m+1)})\nonumber\\
 &= F_n^{T} \cdots F^T_{n+m} \left(\sum_{v \in V_{n+m+1}}
 q_v^{(n+m+1)} \ov e^{(n+m+1)}(v)\right)\nonumber\\
 &= \sum_{v \in V_{n+m+1}} q_v^{(n+m+1)} F_n^{T} \cdots F^T_{n+m}
 \ov e^{(n+m+1)}(v)\nonumber\\
 &= \sum_{v \in V_{n+m+1}} q_v^{(n+m+1)} \ov g^{(n+m,n)}(v).
\end{align*}
\end{proof}

For every $n \geq 1$,  define
$$
\Delta^{(n), \varepsilon}_{\infty} := \bigcup_{\ov q \in
 \Delta_{\infty}^{(n)}} B(\ov q, \varepsilon),
$$
where $B(\ov q, \varepsilon)$ is the ball of radius $\varepsilon > 0$
centered at  $\ov q \in \mathbb{R}^{|V_n|}$. Here the metric is
defined by the Eucleadian norm $||\cdot ||$ on $\mathbb R^{|V_n|}$.

The following lemma can be proved straightforward, so we omit the proof.

\begin{lemma}\label{remar1}
Fix any natural numbers  $n$ and $m$.   Let $\Delta_{m}^{(n)}$ be defined
as above.
If $\ov q^{(n,m)} \in \Delta_{m}^{(n)}$ for infinitely many $m$
and $\ov q^{(n,m)} \rightarrow \ov q^{(n)}$ as $m \to \infty$,   then
$\ov q^{(n)} \in
\Delta_{\infty}^{(n)}$. Moreover, for every $\varepsilon > 0$ there exists $m_0 = m_0(n,\varepsilon)$
 such that $\Delta_{m}^{(n)} \subset \Delta^{(n),
 \varepsilon}_{\infty}$ for all $m \geq m_0$.
\end{lemma}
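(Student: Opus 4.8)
The plan is to deduce both assertions from two elementary facts about the polytopes $\Delta_m^{(n)}$ already recorded above: each such set is compact, and the sequence $(\Delta_m^{(n)})_m$ is nested and decreasing, $\Delta^{(n)} \supset \Delta_1^{(n)} \supset \Delta_2^{(n)} \supset \cdots$. Compactness holds because $\Delta_m^{(n)} = F_n^T \cdots F_{n+m-1}^T(\Delta^{(n+m)})$ is the image of the compact simplex $\Delta^{(n+m)}$ under the continuous (linear) map $F_n^T \cdots F_{n+m-1}^T$; in particular each $\Delta_m^{(n)}$ is a closed subset of $\mathbb{R}^{|V_n|}$.

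For the first assertion I would fix an arbitrary index $m_1$ and show $\ov q^{(n)} \in \Delta_{m_1}^{(n)}$. By hypothesis there are infinitely many $m$ with $\ov q^{(n,m)} \in \Delta_m^{(n)}$; among these all but finitely many satisfy $m \geq m_1$, and for such $m$ the nesting gives $\ov q^{(n,m)} \in \Delta_m^{(n)} \subset \Delta_{m_1}^{(n)}$. This produces a subsequence lying in the closed set $\Delta_{m_1}^{(n)}$ and converging to $\ov q^{(n)}$, whence $\ov q^{(n)} \in \Delta_{m_1}^{(n)}$. Since $m_1$ was arbitrary, $\ov q^{(n)} \in \bigcap_{m_1 \geq 1} \Delta_{m_1}^{(n)} = \Delta_{\infty}^{(n)}$, which is the desired conclusion.

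For the second assertion I would argue by contradiction, using the compactness of $\Delta^{(n)}$. Suppose the inclusion $\Delta_m^{(n)} \subset \Delta^{(n), \varepsilon}_{\infty}$ fails for infinitely many $m$; then for each such $m$ I can choose $\ov q^{(n,m)} \in \Delta_m^{(n)}$ with $d(\ov q^{(n,m)}, \Delta_{\infty}^{(n)}) \geq \varepsilon$. Since every $\Delta_m^{(n)}$ is contained in the compact simplex $\Delta^{(n)}$, I may pass to a convergent subsequence $\ov q^{(n,m_k)} \to \ov q^{(n)}$ with $m_k \to \infty$ and $\ov q^{(n,m_k)} \in \Delta_{m_k}^{(n)}$ for every $k$; the argument of the first assertion then applies verbatim to $(\ov q^{(n,m_k)})_k$ and forces $\ov q^{(n)} \in \Delta_{\infty}^{(n)}$. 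But then $d(\ov q^{(n,m_k)}, \Delta_{\infty}^{(n)}) \leq || \ov q^{(n,m_k)} - \ov q^{(n)} || \to 0$, contradicting the bound $d(\ov q^{(n,m_k)}, \Delta_{\infty}^{(n)}) \geq \varepsilon$. Hence the inclusion holds for all sufficiently large $m$, which yields the required $m_0 = m_0(n,\varepsilon)$.

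The argument is purely topological and presents no real difficulty — this is surely why the authors omit it. The one point deserving care, and the step I would flag as the crux, is the justification that each $\Delta_m^{(n)}$ is closed (equivalently compact): without it the limit of points drawn from successive $\Delta_m^{(n)}$ need not lie in a fixed $\Delta_{m_1}^{(n)}$, and both parts break down. Everything else reduces to the nesting of the polytopes and a routine subsequence extraction in the finite-dimensional space $\mathbb{R}^{|V_n|}$.
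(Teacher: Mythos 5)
Your proof is correct and is exactly the ``straightforward'' argument the authors had in mind when they omitted the proof: compactness of each $\Delta_m^{(n)}$ (as the continuous linear image of a compact simplex), the nesting $\Delta^{(n)}\supset\Delta_1^{(n)}\supset\Delta_2^{(n)}\supset\cdots$, and a routine subsequence extraction in $\mathbb{R}^{|V_n|}$. Nothing further is needed.
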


In the next statement we prove that  extreme points of the limiting convex set
$\Delta^{(n)}_{\infty}$ can be obtained as limits of sequences of extreme points
 of convex polytopes.

\begin{lemma}\label{remar3}
Fix $n \in \mathbb{N}$ and $\varepsilon >0$.  Let
$\Delta^{(n)}_{\infty}$,
$\Delta_{n+m+1}^{(n)}$, $V^{(n)}_{n+m+1}$ be defined as above for
any $m \in \mathbb{N}$. Then, for every extreme point $\ov y$ of
$\Delta^{(n)}_{\infty}$ there exists $m_0 = m_0(n,\varepsilon)$ such that,
 for all $m \geq m_0$, one can find an extreme point $\ov y^{(n,m)}(v)$ of
  $\Delta_{n+m+1}^{(n)}$,  $v \in V^{(n)}_{n+m+1}$,
satisfying the property
$$
 ||\ov y - \ov y^{(n,m)}(v)|| < \varepsilon.
 $$
\end{lemma}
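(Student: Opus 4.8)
The plan is to argue by contradiction, exploiting three ingredients: the representation $\Delta^{(n)}_{\infty}=\bigcap_{m}\Delta^{(n)}_m$ as a decreasing intersection of polytopes, the one-sided Hausdorff convergence $\Delta^{(n)}_m\to\Delta^{(n)}_{\infty}$ supplied by Lemma \ref{remar1}, and the fact that the ambient dimension $|V_n|$ stays fixed once $n$ is fixed. First I would fix an extreme point $\ov y$ of $\Delta^{(n)}_{\infty}$. Since $\ov y\in\Delta^{(n)}_{\infty}\subset\Delta^{(n)}_m$ for every $m$, and $\Delta^{(n)}_m$ (see (\ref{eq def Delta^n_m})) is the convex hull of its finite set of extreme points $\{\ov y^{(n,m)}(v):v\in V^{(n)}_{n+m+1}\}$, Carath\'eodory's theorem lets me write
\begin{equation*}
\ov y=\sum_{j=1}^{N_m}\lambda^{(m)}_j\,\ov y^{(n,m)}(v_j),\qquad \lambda^{(m)}_j>0,\ \ \sum_j\lambda^{(m)}_j=1,\ \ N_m\le|V_n|,
\end{equation*}
as a convex combination of at most $|V_n|$ extreme points of $\Delta^{(n)}_m$.

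Next I would set up the contradiction. Suppose the assertion fails: there exist $\varepsilon>0$ and a sequence $m_k\to\infty$ such that \emph{no} extreme point of $\Delta^{(n)}_{m_k}$ lies within distance $\varepsilon$ of $\ov y$; in particular $\|\ov y^{(n,m_k)}(v_j)-\ov y\|\ge\varepsilon$ for every extreme point appearing in the decomposition above. All vectors involved sit in the fixed compact simplex $\Delta^{(n)}\subset\mathbb R^{|V_n|}$, and the number of terms is uniformly bounded by $|V_n|$. Hence, passing to a subsequence (Bolzano--Weierstrass applied to both the weights in $[0,1]$ and the points in $\Delta^{(n)}$), I may assume $N_{m_k}\equiv N$ is constant, $\lambda^{(m_k)}_j\to\lambda_j$, and $\ov y^{(n,m_k)}(v_j)\to\ov z_j$ for each $j$. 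Taking limits and discarding the indices with $\lambda_j=0$ yields
\begin{equation*}
\ov y=\sum_{j}\lambda_j\,\ov z_j,\qquad \lambda_j>0,\ \ \sum_j\lambda_j=1,\qquad \|\ov z_j-\ov y\|\ge\varepsilon .
\end{equation*}

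The crucial step is then to show that each limit vector $\ov z_j$ actually belongs to $\Delta^{(n)}_{\infty}$. This is where Lemma \ref{remar1} enters: for every $\delta>0$ one has $\Delta^{(n)}_m\subset\Delta^{(n),\delta}_{\infty}$ for all large $m$, so $\mathrm{dist}\!\big(\ov y^{(n,m_k)}(v_j),\Delta^{(n)}_{\infty}\big)\to0$ as $k\to\infty$; since $\ov y^{(n,m_k)}(v_j)\to\ov z_j$ and $\Delta^{(n)}_{\infty}$ is closed, it follows that $\ov z_j\in\Delta^{(n)}_{\infty}$. But then $\ov y$ is a genuine convex combination (positive weights) of points $\ov z_j$ of $\Delta^{(n)}_{\infty}$, each of which differs from $\ov y$ because $\|\ov z_j-\ov y\|\ge\varepsilon>0$. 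This contradicts the extremality of $\ov y$, since an extreme point of a convex set cannot be represented as a convex combination of points of the set all of which are distinct from it. The contradiction shows that for each $\varepsilon>0$ only finitely many $m$ can fail, which gives the desired $m_0=m_0(n,\varepsilon)$.

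I expect the main obstacle to be precisely the verification that the limits $\ov z_j$ fall back inside $\Delta^{(n)}_{\infty}$ rather than merely inside some larger $\Delta^{(n)}_m$; this is the only place where genuine information about the diagram is used, entering through the Hausdorff-type convergence of Lemma \ref{remar1} together with the closedness of $\Delta^{(n)}_{\infty}$. The fixed choice of $n$ is essential throughout, since it bounds the ambient dimension $|V_n|$ and thereby lets both Carath\'eodory (uniformly many terms) and the extraction of convergent subsequences go through; the argument would not transfer verbatim to a setting in which $|V_n|\to\infty$.
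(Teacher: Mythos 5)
Your proof is correct and follows essentially the same route as the paper's: argue by contradiction, write $\ov y$ as a Carath\'eodory convex combination of extreme points of $\Delta^{(n)}_m$, extract convergent subsequences using compactness and the fixed dimension $|V_n|$, push the limits into $\Delta^{(n)}_\infty$ via Lemma \ref{remar1}, and contradict extremality. Your endgame is in fact a little cleaner than the paper's (which isolates two vertices with weights bounded away from $0$ and $1$ and splits into cases according to whether $a_v+a_{v'}\to 1$), since observing that all limit points $\ov z_j$ lie in $\Delta^{(n)}_\infty$ at distance at least $\varepsilon$ from $\ov y$ already forbids the positive-weight representation $\ov y=\sum_j\lambda_j\ov z_j$.
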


\begin{proof}
We first assume that $\mathrm{diam}(\Delta_{\infty}^{(n)}) = 0$. Then
$\mathrm{diam}(\Delta_m^{(n)})\to 0$ as $m \to \infty$.
Hence $\Delta_{\infty}^{(n)}$ is a single point, and Lemma~\ref{remar3}
holds.

Let $\mathrm{diam}(\Delta_{\infty}^{(n)}) > 0$. Then  we can find
$m_0 \geq 1$ and $1 \leq n_0 \leq |V_n| - 1$ such that $\mathrm{dim}
 (\Delta_m^{(n)}) = n_0$ for all $m \geq m_0$. Without loss of generality
 we can assume that $m_0 = 1$.

Assume that Lemma~\ref{remar3} is not true, that is the converse
 statement holds. Then, by Lemma
\ref{remar1},  we can find $\varepsilon_0 > 0$ and an infinite
 subset $M \subset \{1,2,\ldots\}$ such that $B(\ov y, \varepsilon_0)$
 contains no extreme points of  $\Delta_{m}^{(n)}$ for $m \in M$. By
 Caratheodory's Theorem on convex hulls, there is an $n_0$-dimensional
 simplex with extreme points $\{\ov y^{(n,m)}(v), v \in V'_{n+m+1} \subset
 V^{(n)}_{n+m+1}\}$ such that $|V'_{n+m+1}| = n_0 + 1$ and
\begin{equation}\label{2.8a}
\ov y = \sum_{v \in V'_{n+m+1}} a_v \ov y^{(n,m)}(v),\ \ m \in M,
\end{equation}
where each $a_v \geq 0$ and $\sum_{v \in V'_{n+m+1}} a_v = 1$. Since
 $B(\ov y, \varepsilon_0)$ contains no vectors $\ov y^{(n,m)}(v), v \in
 V'_{n+m+1}$, there is $\varepsilon'_0 > 0$ such that $0 \leq a_v \leq 1
 - \varepsilon'_0$ for all $v \in V'_{n+m+1}$. As far as $\sum_{v \in
 V'_{n+m+1}} a_v = 1$, we can find two different vertices $v, v'
 \in V'_{n+m+1}$ (both $v$ and $v'$ depend on $m$) such that
\begin{equation*}\label{2.8b}
a_v, a_{v'} \in [\varepsilon''_0, \ 1 - \varepsilon'_0],
\end{equation*}
where $\varepsilon''_0 = \dfrac{\varepsilon'_0}{|V_n|}$.

Relation ~(\ref{2.8a}) can be rewritten in the form
\begin{multline}\label{2.8c}
\ov y = (a_v + a_{v'})\left(\frac{a_v}{a_v + a_{v'}} \ov y^{(n,m)}(v) +
 \frac{a_{v'}}{a_v + a_{v'}} \ov y^{(n,m)}(v')\right)  \\
 +(1 - a_v - a_{v'}) \sum_{u \in  V'_{n+m+1} \setminus \{v,v'\}}
  \frac{a_u}{1 - a_v - a_{v'}} \ov y^{(n,m)}(u).
\end{multline}

First assume that $(a_v + a_{v'}) \rightarrow 1$ as $m \rightarrow
\infty$. We can find an infinite subset $M' \subset M$ such that when $m
\in M'$ and $m \rightarrow \infty$ we have
\begin{equation}\label{2.8c1}
\frac{a_v}{a_v + a_{v'}} \rightarrow \lambda, \qquad  \frac{a_{v'}}{a_v +
 a_{v'}} \rightarrow 1 - \lambda,
\end{equation}
where $\frac{1}{2}\varepsilon''_0 \leq \lambda \leq 1 - \frac{1}{2}
\varepsilon''_0$, and
$$
\ov y^{(n,m)}(v) \rightarrow \ov z(v) \in \Delta_{\infty}^{(n)}, \qquad
\ov y^{(n,m)}(v') \rightarrow \ov z(v') \in \Delta_{\infty}^{(n)}.
$$
It follows from~(\ref{2.8c}) that
\begin{equation}\label{2.8d}
\ov y = \lambda \ov z(v) + (1-\lambda)\ov z(v').
\end{equation}
Clearly, $\ov z(v) \neq \ov z(v')$ because otherwise we would get that
$B(\ov y, \varepsilon_0)$ contains an extreme vector $\ov y^{(n,m)}(v)$ for some
$m \in M'$. It follows from~(\ref{2.8d}) that $\ov y$ is not an extreme vector of
$\Delta_{\infty}^{(n)}$ and we get a contradiction.

If $a_v + a_{v'} \nrightarrow 1$ as $m\to \infty$, then we can find again
 an infinite subset $M'' \subset M$ such that conditions~(\ref{2.8c1}) are
 satisfied. Moreover,  for some $\tau\in (0, 1)$, we obtain
$$
a_v + a_{v'} \rightarrow \tau,
$$
and
$$
\sum_{u \in  V'_{n+m+1} \setminus \{v,v'\}} \frac{a_u}{1 - a_v - a_{v'}}
\ov y^{(n,m)}(u) \rightarrow \ov z' \in \Delta^{(n)}_{\infty}
$$
as $m \in M''$ and $m \rightarrow \infty$. Then~(\ref{2.8c}) implies that
$$
\ov y = \tau(\lambda \ov z(v) + (1-\lambda)\ov z(v')) + (1 - \tau)\ov z'.
$$
Therefore, we showed that $\ov y$ is not an extreme vector
of $\Delta_{\infty}^{(n)}$. This contradiction proves the lemma.
\end{proof}

\section{A criterion of unique ergodicity of Bratteli diagrams} \label{Section 3}

In this section, we deal with arbitrary Bratteli diagram $B$ and prove  a
 criterion of the unique ergodicity of $B$, i.e., we discuss the case when
 the space $\mathcal M_1(B)$ is a singleton.

\begin{thm}\label{uniq_erg} 
A Bratteli diagram $B = (V,E)$ is uniquely ergodic if and only if there exists a telescoping $B'$ of $B$ such that
\begin{equation}\label{2.3b}
\lim_{n\to \infty}\ \max_{v, v' \in V_{n+1}} \left(\sum_{w \in V_n}
\left|f_{vw}^{(n)} - f_{v'w}^{(n)}\right| \right) = 0 
\end{equation}
Here $f_{vw}^{(n)}$ are entries of
 the stochastic matrix $F_n$ defined by the diagram $B'$.

\end{thm}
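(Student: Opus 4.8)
The plan is to translate unique ergodicity through Theorem~\ref{Theorem_measures_general_case} and to recognize the left-hand side of~(\ref{2.3b}) as a diameter. By the inverse-limit description accompanying Theorem~\ref{Theorem_measures_general_case}, the diagram $B$ is uniquely ergodic exactly when every $\Delta_\infty^{(n)}$ is a single point: if some $\Delta_\infty^{(n)}$ contained two distinct points, then by surjectivity of $F_n^T$ on the sets $\Delta_\infty^{(\cdot)}$ in~(\ref{eq F_n relates Delta}) each would extend forward and backward to a coherent sequence, and these two sequences, differing at level $n$, would correspond via the affine isomorphism $\Phi$ to two distinct measures. Next I would observe that $\Delta_1^{(n)} = F_n^T(\Delta^{(n+1)})$ is the convex hull of the vectors $F_n^T\ov e^{(n+1)}(v) = (f_{vw}^{(n)})_{w\in V_n}$, i.e.\ of the rows of $F_n$; since $d^*$ comes from a norm, the diameter of the convex hull of finitely many points equals their maximal pairwise distance, so the quantity under the limit in~(\ref{2.3b}) is precisely $\mathrm{diam}_{d^*}(\Delta_1^{(n)})$.

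For the sufficiency direction I would first record the one computation that drives the argument: each $F_n^T$ is non-expanding for $d^*$, since row-stochasticity of $F_n$ gives $d^*(F_n^T\ov x, F_n^T\ov y) = \sum_{w}\bigl|\sum_v f_{vw}^{(n)}(x_v - y_v)\bigr| \le \sum_v |x_v - y_v|\sum_w f_{vw}^{(n)} = d^*(\ov x,\ov y)$. Because telescoping preserves $\mathcal M_1(B)$, it suffices to prove that a telescoping $B'$ satisfying~(\ref{2.3b}) is uniquely ergodic, so we may assume $B$ itself satisfies~(\ref{2.3b}). For $m \le n$, iterating~(\ref{eq F_n relates Delta}) yields $\Delta_\infty^{(m)} = F_m^T\cdots F_{n-1}^T(\Delta_\infty^{(n)})$ with the composite map non-expanding, and since $\Delta_\infty^{(n)}\subset \Delta_1^{(n)}$ we get $\mathrm{diam}(\Delta_\infty^{(m)}) \le \mathrm{diam}(\Delta_\infty^{(n)}) \le \mathrm{diam}(\Delta_1^{(n)}) \to 0$. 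As $m$ is fixed and the bound holds for all $n \ge m$, each $\Delta_\infty^{(m)}$ is a single point, so $B$ is uniquely ergodic.

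For necessity, assume $B$ is uniquely ergodic, so every $\Delta_\infty^{(n)} = \bigcap_m \Delta_m^{(n)}$ is a singleton. For each fixed $n$ the polytopes $\Delta_m^{(n)}$ form a decreasing sequence of compact sets shrinking to a point, whence $\mathrm{diam}(\Delta_m^{(n)}) \to 0$ as $m \to \infty$ (any two sequences of diameter-witnesses subconverge, by compactness, to two points of the one-point intersection, forcing the diameters to vanish). I would then build the telescoping $B'$ inductively: put $n_0 = 0$, $n_1 = 1$, and given $n_k$ choose $m_k \ge 1$ with $\mathrm{diam}(\Delta_{m_k}^{(n_k)}) < 1/k$, setting $n_{k+1} = n_k + m_k$. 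The key structural point is that telescoping multiplies stochastic matrices: the stochastic incidence matrix of $B'$ at level $k$ is $F'_k = F_{n_{k+1}-1}\cdots F_{n_k}$ (checked directly from~(\ref{stoch_inc_matr}), since the height factors telescope), so $(F'_k)^T(\Delta^{(n_{k+1})}) = F_{n_k}^T\cdots F_{n_{k+1}-1}^T(\Delta^{(n_{k+1})}) = \Delta_{m_k}^{(n_k)}$. Thus the one-step polytope of $B'$ at level $k$ is $\Delta_{m_k}^{(n_k)}$, and by the diameter identity of the first paragraph the left-hand side of~(\ref{2.3b}) for $B'$ at level $k$ equals $\mathrm{diam}_{d^*}(\Delta_{m_k}^{(n_k)}) < 1/k \to 0$, establishing~(\ref{2.3b}).

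The genuinely delicate point is the necessity direction: verifying that telescoping corresponds to taking products of the matrices $F_n$ (so that the one-step data of $B'$ is exactly the deep data $\Delta_m^{(n)}$ of $B$), and then making the diagonal inductive choice of cut levels $(n_k)$ so that all per-level diameters are simultaneously driven to zero. By contrast the sufficiency direction is comparatively soft once the non-expansiveness of $F_n^T$ is in hand. I would also take care with the passage from ``$\Delta_\infty^{(1)}$ is a point'' to ``every $\Delta_\infty^{(n)}$ is a point,'' which is why I phrase unique ergodicity through the full inverse limit rather than through level one alone.
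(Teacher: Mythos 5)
Your proof is correct and follows essentially the same route as the paper: both rest on identifying unique ergodicity with all $\Delta_\infty^{(n)}$ being singletons, on the $d^*$-non-expansiveness of $F_n^T$ for row-stochastic $F_n$ (which is exactly the paper's one-step estimate $d^*(\ov g^{(n+m,n)}(v),\ov g^{(n+m,n)}(v'))\le d^*(\ov f^{(n+m)}(v),\ov f^{(n+m)}(v'))$), and, for necessity, on the same diagonal choice of telescoping levels together with the observation that telescoping replaces the stochastic matrices by their products $G_{(n_i+m_i,n_i)}$. Your repackaging of the left-hand side of~(\ref{2.3b}) as $\mathrm{diam}_{d^*}(\Delta_1^{(n)})$ and the pushing of that bound down to level $m$ is a cosmetic variant of the paper's computation, not a different argument.
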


\begin{proof}
We observe that $B$ is uniquely ergodic if and only if the simplex
$\Delta_{\infty}^{(n)}$ is a singleton for all $n = 1,2,\ldots$

To prove the ``if'' part, it suffices to show that the diameter
$\mathsf{diam}(\Delta_m^{(n)}) \to 0$ 
 as $m \to \infty$.  The polytope $\Delta_m^{(n)}$ is the convex
hull of the vectors $\{\ov g^{(n+m,n)}(v)\}_{v \in V_{n+m+1}}$.
According to~(\ref{2.3}), we have
$$
d^*(\ov g^{(n+m,n)}(v), \ov g^{(n+m,n)}(v')) = \sum_{w \in V_n}
\left|g_{vw}^{(n+m,n)} - g_{v'w}^{(n+m,n)}\right|.
$$
Then we obtain
\begin{eqnarray*}
\sum_{w \in V_n}\left|g_{vw}^{(n+m,n)} - g_{v'w}^{(n+m,n)}\right| &=&
\sum_{w \in V_n}\left|\sum_{u \in V_{n+m}} (f_{vu}^{(n+m)} - f_{v'u}^{(n
+m)})g_{uw}^{(n+m-1,n)}\right|\\
&\leq&  \sum_{u \in V_{n+m}} \left|f_{vu}^{(n+m)} - f_{v'u}^{(n+m)}\right|
\sum_{w \in V_n} g_{uw}^{(n+m-1,n)}\\
&=& d^*(\ov f^{(n+m)}(v),\ov f^{(n+m)}(v')).
\end{eqnarray*}

The last equality is due to the fact that $\sum_{w \in V_n}
g_{uv}^{(n+m-1,n)} = 1$.
Thus, we have
$$
d^*(\ov g^{(n+m,n)}(v), \ov g^{(n+m,n)}(v')) \leq d^*(\ov f^{(n+m)}(v),
\ov f^{(n+m)}(v')) \rightarrow 0
$$
as $m \rightarrow \infty$. Hence $\mathsf{diam}(\Delta_m^{(n)})
\rightarrow 0$ which implies that $\Delta^{(n)}_{\infty}$ is a singleton.

Next,  we prove the ``only if'' part. Since $\Delta^{(n)}_{\infty}$ is a single
point, we have
 $\mathsf{diam}(\Delta_m^{(n)}) \rightarrow 0$ as $m \rightarrow \infty$
 for each $n = 1,2,\ldots$  Then
$$
\max_{v,v' \in V_{n+m+1}} d^{*}(\ov g^{(n+m,n)}(v), \ov g^{(n+m,n)}(v')) \leq \mathsf{diam}
(\Delta_m^{(n)}) \rightarrow 0 \mbox{ as } m \rightarrow \infty.
$$
Let $(\varepsilon_n)$ be a sequence converging to $0$. For every $n$,
we can take sufficiently large $m = m(n)$ such that
\begin{equation*}
\max_{v,v' \in V_{n+m+1}} \sum_{w \in V_n}\left|g_{vw}^{(n+m,n)} -
g_{v'w}^{(n+m,n)}\right| \leq \varepsilon_{n}.
\end{equation*}
Hence, we can find sequences $\{n_i\}_{i \geq 1}$,
$\{m_i\}_{i \geq 1}$ of positive integers such that
\begin{equation}\label{2.5}
\max_{v, v' \in V_{n_i+1}} \sum_{w \in V_{n_i}}\left|g_{vw}^{({n_i}+m_i,
{n_i})} - g_{v'w}^{({n_i}+m_i,{n_i})}\right| \leq \varepsilon_{n_i},
\end{equation}
and such that
$$
1 \leq n_1 < n_1 + m_1 = n_2 < n_2 + m_2 = n_3 < \ldots
$$
Telescoping the Bratteli diagram $B = (V,E)$ with respect to the levels $n_1
< n_2 < n_3 < \ldots$,
 we obtain a new Bratteli diagram $B' = (V',E')$ for which stochastic incidence matrices
 are  $G^{(n_i + m_i, n_i)}$, $i = 1,2,\ldots$ Then (\ref{2.5}) implies
 (\ref{2.3b}), and the proof is complete.
\end{proof}

We remark that the operation of telescoping and using the stochastic 
incidence matrix instead of the usual integer-valued incidence matrix are
 crucial steps for Theorem  
\ref{uniq_erg}. In the following example we show that without telescoping
the statement of the theorem is not true.

\begin{example} \label{ex 3.2} In this example we illustrate the criterion 
given in
 Theorem \ref{uniq_erg}. Let $B_1$ be a Bratteli diagram with incidence
  matrices
$$
\tl F_n^{(1)} =
\begin{pmatrix}
n & 1\\
1 & n
\end{pmatrix}, \qquad n \in \mathbb N,
$$
and let $B_2$ be a Bratteli diagram with incidence matrices
$$
\tl F_n^{(2)} =
\begin{pmatrix}
n^2 & 1\\
1 & n^2
\end{pmatrix}, \qquad n \in \mathbb N.
$$
It is known that the diagram $B_1$ is uniquely ergodic, and $B_2$ has
exactly two finite invariant  ergodic measures, see details in
 \cite{BezuglyiKwiatkowskiMedynetsSolomyak2013},
 \cite[Example 3.6]{AdamskaBezuglyiKarpelKwiatkowski2016},  and
 \cite{FerencziFisherTalet2009}.

We show how Theorem \ref{uniq_erg} works in this case and
emphasize  the importance of telescoping in its proof.

First notice that the both diagrams  have the ERS property, hence the
 corresponding stochastic matrices are easy to compute
 (see Example~\ref{ExampleERS1}):
$$
F^{(1)}_n =
\begin{pmatrix}
1 - \dfrac{1}{n+1} & \dfrac{1}{n+1}\\
\\
\dfrac{1}{n+1} & 1 - \dfrac{1}{n+1}
\end{pmatrix}
$$
and
$$
F^{(2)}_n =
\begin{pmatrix}
1 - \dfrac{1}{n^2+1} & \dfrac{1}{n^2+1}\\
\\
\dfrac{1}{n^2+1} & 1 - \dfrac{1}{n^2+1}
\end{pmatrix}.
$$
Obviously, without telescoping, for the both diagrams $B_1$ and $B_2$
the limit in~(\ref{2.3b})
 equals $2$. However, the telescoping procedure reveals the crucial
  difference between the diagrams $B_1$ and $B_2$.

Suppose we have an ERS diagram with $2 \times 2$ stochastic incidence
 matrices
$$
F_n =
\begin{pmatrix}
a_n & b_n\\
b_n & a_n
\end{pmatrix}.
$$
As before, let $G_{(n,n+m)} = (g_{vw}^{(n+m,n)})$ be the corresponding
 product matrix.
It can be easily proved  by induction that,  for arbitrary
$n, m  \in \mathbb{N}$, the following formula holds:
$$
S(n,m) = \sum_{w \in V_n} \left|g_{vw}^{(n+m,n)} - g_{v'w}^{(n+m,n)}\right| =
 2\prod_{i = 0}^{m}\left|(a_{n+i} - b_{n+i})\right|.
$$

In the case of the diagram $B_1$, we obtain
$$
S(n,m) = 2\prod_{i = 0}^{m}\left(1 - \frac{2}{n+i+1}\right), \quad
n, m \in \mathbb N.
$$
Since the series $\sum_{n=1}^{\infty} n^{-1}$ diverges, we see
that
$$
S(n,m) \to 0, \qquad m \to \infty.
$$
 Choose a decreasing sequence $(\varepsilon_k)$ such that
 $\varepsilon_k \to 0$ as $k \to \infty$. For $n = n_1$ and
 $\varepsilon_1$, find $m_1$ such that $S(n_1, m_1) < \varepsilon_1$.
 Set $n_2 = n_1 + m_1$. For  $\varepsilon_2$, find $m_2$ such that
 $S(n_2, m_2) < \varepsilon_2$. Set $n_3 = n_2 + m_2$. Continuing
 in the same manner, we construct a sequence $(n_k)$ such
 that $S(n_k, n_{k+1} - n_k) < \varepsilon_k$. Telescope the diagram
 with respect to the levels $(n_k)$. By Theorem
 \ref{uniq_erg}, we conclude that the diagram $B_1$ is uniquely ergodic.

In the case of diagram $B_2$,  the convergence
of the series $\sum_{n=1}^{\infty} n^{-2}$, does not allow us
to conclude that $S(n,m) \to 0$ as $m \to \infty$. Hence, the criterion
is not applicable for the diagram $B_2$.

\end{example}

The following example shows that using stochastic incidence matrices is an
 important part of Theorem~\ref{uniq_erg}. If we use usual 
integer-valued incidence matrices instead of stochastic ones, then Theorem 
\ref{uniq_erg} will be not true.

\begin{example}\label{non-simple-stat-ex}
Let $B$ be the stationary Bratteli diagram defined by the incidence
  matrices
$$
\tl F_n = \tl F =
\begin{pmatrix}
3 & 0\\
1 & 2
\end{pmatrix}
$$
for every $n \in \mathbb N$. It is well known that $B$ has a unique finite
 ergodic  measure supported by the 3-odometer (see e.g.
 \cite{BezuglyiKwiatkowskiMedynetsSolomyak2010}). We show that $B$ 
 satisfies the condition of unique ergodicity formulated in 
 Theorem \ref{uniq_erg}. It is easy to check that the $n$-th power of 
 $\tl F$ is
$$
\tl F^{n} =  
\begin{pmatrix}
3^n & 0\\
3^n - 2^n & 2^n
\end{pmatrix}.
$$
Hence the entries of the matrix $\tl F$ do not satisfy (\ref{2.3b}) even after 
taking products of these matrices (which corresponds to telescoping of $B$).
 Notice that $B$ has the 
ERS property. For any $n \in \mathbb{N}$ and a vertex $w \in V_n$, we 
have $h_w^{(n)} = 3^n$. Therefore, the corresponding stochastic incidence
 matrix and its $n$-th power are
$$
F =
\begin{pmatrix}
1 & 0\\
\frac{1}{3} & \frac{2}{3}
\end{pmatrix}
$$
and
$$
F^{n} =  
\begin{pmatrix}
1 & 0\\
1 - \dfrac{2^n}{3^n} & \dfrac{2^n}{3^n}
\end{pmatrix}.
$$
Hence, we see that $B$ satisfies (\ref{2.3b}).

\end{example}

\section{Ergodic invariant measures for finite rank Bratteli diagrams }
\label{sect 4}

In this section, we  study ergodic invariant measures on finite rank
 Bratteli diagrams. Let $B=(V,E)$ be a Bratteli diagram of rank $k\geq2$,
  i.e. after telescoping one can assume that $|V_n| = k$ for $n = 1,2,\ldots$
Then, for any $n$, the incidence matrices $\tl F_n =
(\tl f_{v,w}^{(n)})_{v \in V_{n+1},
 w \in V_n}$ and corresponding stochastic matrices $F_n$ are $k \times k$
  matrices. In the case of a Bratteli diagram of finite rank,
the representation of the set $\mathcal{M}_1(B)$ as the inverse limit of
$(F_n^T, \Delta_{\infty}^{(n)})$  becomes more transparent.

Throughout this section, we use the following assumption.

\medskip
\noindent
\textbf{Assumption}: \textit{We  assume that}:

(i) $B = (V,E)$ \textit{is a Bratteli diagram of finite rank} $k$;

(ii) \textit{all incidence matrices $\tl F_n = (\tl f^{(n)}_{vw})$ of
$B$ are  nonsingular,}
  $$
  \det \tl F_n \neq 0, \ \ \ n\in \mathbb N.
  $$
\medskip

It follows immediately that  the stochastic matrices $F_n =
(f_{vw}^{(n)})_{v\in V_{n+1}, w \in V_n}$ are also nonsingular.

\subsection{Invariant measures in terms of decreasing sequences of
 simplices}\label{subsec4.1}

Under the made assumption, the number of probability ergodic
invariant measures on a Bratteli diagram equals the number of extreme points of a
simplex $\Delta_{\infty}^{(1)}$, which, in turn, is the limit of a decreasing
sequence of simplices $\{\Delta_{m}^{(1)}\}_{m=1}^{\infty}$. Indeed,
since the square matrices $F_n^T$  are  invertible, one can
restore the whole sequence of vectors
  $(\ov q^{(n)})_{n=1}^{\infty}$ from the vector $\ov q^{(1)}$,
see  Theorem \ref{Theorem_measures_general_case}.
Below we explicitly define these simplices, using the incidence matrices
 of the Bratteli
diagram, and  study the way these simplices are embedded one into
another. This study will allow us to derive the properties of the incidence
matrices in the case when the number of probability ergodic invariant
 measures is known.

\begin{remark}
(1) Because of the  assumption formulated above, it is easily seen that, for
any $m, n \in \mathbb{N}$, the sets $\Delta_m^{(n)}$ are simplices with
 exactly $k$ extreme points.
 Then the simplices $\Delta^{(n)} (=\Delta^{(1)})$  do not depend
 on the level $V_n$. The index $n$ points out only that $\Delta^{(n)}$ has
  the  basis enumerated by vertices of the level $V_n$.  

(2) Since all incidence matrices are non-singular, the intersection
$\Delta^{(1)}_{\infty} =
 \bigcap_{m=1}^{\infty} \Delta_m^{(1)}$ is a simplex which we can
  identify with the set $\mathcal{M}_1(B)$ of all probability
  $\mathcal{E}$-invariant Borel measures on $X_B$. The simplex
  $\Delta^{(1)}_{\infty}$ has \textit{at most} $k$ extreme points (see the
  proof in \cite{BezuglyiKwiatkowskiMedynetsSolomyak2013}) which is
  based on  \cite{Phelps2001, Pullman1971}, and
   \cite{BezuglyiKwiatkowskiMedynetsSolomyak2010}).
It is not hard to see that the dimension of $\Delta^{(1)}_{\infty}$ can be
 any natural number $ l$, $1 \leq l \leq k$.
In \cite{BezuglyiKwiatkowskiMedynetsSolomyak2013}, the reader can find
various examples of Bratteli  diagrams where the number of ergodic
$\mathcal E$-invariant  measures varies from $1$ (this is the case of
uniquely ergodic   diagrams) to $k$.

 (3) If $\Delta_{\infty}^{(1)}$ is a simplex with $l$ extreme points then there are
 exactly $l$ ergodic $\mathcal E$-invariant Borel probability
 measures on $B$. In the case of finite rank Bratteli diagrams with
  nonsingular incidence matrices, the simplex $\Delta^{(n)}_\infty$
   also has exactly $l$ extreme points for every $n$.

(4) Under the made assumption, we know precisely the number $k$ of extreme points of the prelimit simplices $\Delta_m^{(n)}$, and the number of extreme points of the limiting simplex $\Delta^{(n)}_\infty$ may be less than $k$ only due to the limit procedure. In the case when matrices $\tl F_n$ are singular, the decrease of the number of extreme points may happen already in the prelimit simplices $\Delta_m^{(n)}$ because of the rank of incidence matrices. Moreover, some of the limiting simplices $\Delta^{(n)}_\infty$ may have different number of extreme points depending on $n$. Nevertheless, it is not difficult to see that there always exists $1 \leq l \leq k$ such that the simplices $\Delta^{(n)}_\infty$ have exactly $l$ extreme points for all $n$ large enough. Then if we restrict the linear operators corresponding to $\tl F_n$ to the appropriate $l$-dimensional subspaces, we will obtain non-singular $l \times l$ matrices which satisfy the assumption.

(5)  We recall that $\Delta^{(m)}$ denotes the standard simplex in
$\mathbb R^{|V_m|}$, and the entries of a  vector from this
simplex are enumerated by vertices from $V_m$. In the case
when $|V_m| = k $, these simplices $\Delta^{(m)}$ are essentially
the same.
Since the incidence matrices of
$B$ are nonsingular, we obtain that the vectors
\begin{equation}\label{eq vectors z_m}
\ov y^{(m)}(w)  =   F_1^T  \cdots
 F_{m-1}^T(\ov e^{(m)}(w))
\end{equation}
are extreme in $\Delta^{(1)}_{m}$ and form a basis in the simplex
$\Delta^{(1)}_{m}, m \geq 1$. Hence, there is a
one-to-one correspondence between vertices $w$ of $V_m$ and the
 extreme vectors $\ov y^{(m)}(w)$ from $\Delta^{(1)}_{m}$.
\end{remark}

Lemma \ref{lem (n+1)-th simplex in n-th} shows explicitly how the
 simplex $\Delta^{(1)}_{n+1}$ is embedded into $\Delta^{(1)}_{n}$.

\begin{lemma}\label{lem (n+1)-th simplex in n-th}
Let $\ov y^{(n)}(w) = (y_v^{(n)}(w) : v \in V_1)^T$
 be the extreme vector from the simplex  $\Delta^{(1)}_{n}$ which
  corresponds  to a vertex $w \in V_n$, see (\ref{eq vectors z_m}).
   Then, for any $v\in V_{n+1}$,
\begin{equation}\label{1.2a}
\ov y^{(n+1)}(v) = \sum_{w \in V_n}f_{vw}^{(n)} \ov y^{(n)}(w).
\end{equation}
\end{lemma}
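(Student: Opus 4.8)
The plan is to obtain (\ref{1.2a}) by a direct computation from the definition (\ref{eq vectors z_m}) of the extreme vectors, using only the linearity of the transpose operators $F_i^T$; no additional machinery is required. In fact (\ref{1.2a}) is just the base-level (that is, $V_1$) instance of the general expansion (\ref{2.3}), specialized to a single telescoping step.

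First I would unfold the definition (\ref{eq vectors z_m}) at level $n+1$, writing
$$
\ov y^{(n+1)}(v) = F_1^T \cdots F_{n-1}^T\, F_n^T(\ov e^{(n+1)}(v)),
$$
and then isolate the outermost factor $F_n^T$. The key step is to evaluate $F_n^T(\ov e^{(n+1)}(v))$. Since $F_n = (f_{vw}^{(n)})_{v \in V_{n+1},\, w \in V_n}$ has rows indexed by $V_{n+1}$ and columns by $V_n$, the $(w,v)$-entry of $F_n^T$ equals $f_{vw}^{(n)}$, so applying $F_n^T$ to the standard basis vector $\ov e^{(n+1)}(v)$ simply reads off the corresponding column:
$$
F_n^T(\ov e^{(n+1)}(v)) = \sum_{w \in V_n} f_{vw}^{(n)}\, \ov e^{(n)}(w).
$$
Substituting this into the displayed expression for $\ov y^{(n+1)}(v)$ and pushing the finite sum through the linear map $F_1^T \cdots F_{n-1}^T$ yields
$$
\ov y^{(n+1)}(v) = \sum_{w \in V_n} f_{vw}^{(n)}\, F_1^T \cdots F_{n-1}^T(\ov e^{(n)}(w)) = \sum_{w \in V_n} f_{vw}^{(n)}\, \ov y^{(n)}(w),
$$
where the last equality is again (\ref{eq vectors z_m}), now at level $n$. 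This is precisely (\ref{1.2a}).

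There is no genuine analytic obstacle; the only point requiring care is the bookkeeping of indices and transposes. One must respect the convention recorded in Remark~\ref{defin_and_notat} that the rows of $F_n$ are labelled by the vertices of $V_{n+1}$ and the columns by those of $V_n$, so that $F_n^T$ carries the standard basis indexed by $V_{n+1}$ into combinations of the standard basis indexed by $V_n$ with coefficients $f_{vw}^{(n)}$; confusing these two labellings is the only way the computation could go astray. Because all incidence matrices are nonsingular (the Assumption of this section), each $\ov y^{(n)}(w)$ is indeed an extreme vector of $\Delta_n^{(1)}$ and the vectors $\{\ov y^{(n)}(w) : w \in V_n\}$ form a basis of that simplex, so (\ref{1.2a}) exhibits the explicit coordinates of the extreme points of $\Delta_{n+1}^{(1)}$ in terms of those of $\Delta_n^{(1)}$, which is exactly the information used in the subsequent analysis.
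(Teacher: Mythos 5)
Your proof is correct and follows exactly the paper's own argument: unfold the definition (\ref{eq vectors z_m}), observe that $F_n^T(\ov e^{(n+1)}(v))=\sum_{w\in V_n} f_{vw}^{(n)}\ov e^{(n)}(w)$ (the $v$-th row of $F_n$), and push the sum through the linear map $F_1^T\cdots F_{n-1}^T$. No differences worth noting.
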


\begin{proof}
We observe that the vector $F_n^T (\ov e^{(n+1)}(v))$ coincides
 with the $v$-th row of $F_n$, and it can be written as a linear
  combination of vectors from the standard basis $\{\ov e^{(n)}(w) :
   w \in V_n\}$. Hence, similarly to
   Proposition~\ref{decomposition_q_n_by_q_n+m}, we have
\begin{align*}
\ov y^{(n+1)}(v) & = F_1^T  \cdots  F_{n-1}^T
F_n^T (\ov e^{(n+1)}(v))\\
& = (F_1^T \cdots  F_{n-1}^T)
\left(\sum_{w \in V_n} f^{(n)}_{vw}\ov e^{(n)}(w)\right)\\
 &= \sum_{w \in V_n} f^{(n)}_{vw} \ov y^{(n)}(w).
\end{align*}
This proves the lemma.
\end{proof}

Theorem \ref{BKMS_measures=invlimits} states that, under the made
 assumption, the set $\mathcal{M}_1(B)$ can be identified with the
 simplex $\Delta_{\infty}^{(1)}$.
In fact, since $\det(F_n) \neq 0$ for $n\geq 1$, we have
$$
\Delta_{\infty}^{(1)} = F_1^T \cdots F_{n-1}^T (\Delta_{\infty}^{(n)}).
$$
Therefore, each $\Delta_{\infty}^{(n)}$ is a simplex with the same
number of extreme points. The inverse
limit $\varprojlim(F_n^T,\Delta_{\infty}^{(n)})$ can be identified with
$\Delta_{\infty}^{(1)}$.

We recall that, in this case,  every vector $\ov q^{(1)}$  from the simplex
$\Delta_{\infty}^{(1)}$ corresponds to an
 $\mathcal E$-invariant measure $\mu$. The measure $\mu$
 is defined by a sequence of vectors $\{\ov q^{(n)}\}$ such that
 $\ov q^{(n)}  = F_n^T \ov q^{(n+1)}$.  The  entries of $\ov q^{(n)}$ in
  the standard basis  $\{\ov e^{(n)}(w)\}$ are the numbers
  $(q_w^{(n)})$   which give
the values of measure $\mu$ on the towers  $\{X^{(n)}_w : w \in V_n\}$.
 On the other hand, since $\ov q^{(1)}$ belongs to every simplex
 $\Delta^{(1)}_n$ (recall that $F_n$ is nonsingular for every $n$),
this vector can be represented in the basis formed by the extreme vectors
  $\{\ov y^{(n)}(w) : w \in V_n\}$.

In the next result,  we establish,  using the same argument as in
Proposition~\ref{decomposition_q_n_by_q_n+m}, the relation between the
vector $\ov q^{(1)}$ and the vectors $\{\ov y^{(n)}(w) : w \in V_n\}$.

\begin{lemma}\label{lem barycentric coordinates}
Let $\mu$, $\ov q^{(n)}$ and $\{\ov y^{(n)}(w) : w \in V_n\}$, $n \in
\mathbb{N}$ be as above. Then
for every $n >1$, the entries  $(q_w^{(n)} : w \in V_n)$ of
$\ov q^{(n)}$ are the barycentric coordinates of the vector
$\ov q^{(1)}$ in $\Delta_n^{(1)}$.
\end{lemma}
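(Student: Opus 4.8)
The plan is to follow exactly the computation carried out in Proposition \ref{decomposition_q_n_by_q_n+m}, since the defining relation $\ov q^{(m)} = F_m^T \ov q^{(m+1)}$ is the only ingredient required. First I would iterate this relation from level $1$ up to level $n$ to obtain $\ov q^{(1)} = F_1^T \cdots F_{n-1}^T\, \ov q^{(n)}$, which is legitimate because all the matrices $F_m$ are nonsingular under the standing Assumption.

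Next, I expand $\ov q^{(n)}$ in the standard basis $\{\ov e^{(n)}(w) : w \in V_n\}$ of $\mathbb{R}^{|V_n|}$, writing $\ov q^{(n)} = \sum_{w \in V_n} q_w^{(n)} \ov e^{(n)}(w)$, and push the linear operator through the finite sum:
\begin{align*}
\ov q^{(1)} &= F_1^T \cdots F_{n-1}^T \left( \sum_{w \in V_n} q_w^{(n)} \ov e^{(n)}(w) \right) \\
&= \sum_{w \in V_n} q_w^{(n)}\, F_1^T \cdots F_{n-1}^T\, \ov e^{(n)}(w) \\
&= \sum_{w \in V_n} q_w^{(n)}\, \ov y^{(n)}(w),
\end{align*}
where the last equality is the definition (\ref{eq vectors z_m}) of the extreme vectors $\ov y^{(n)}(w)$ of $\Delta^{(1)}_n$. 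This already exhibits $\ov q^{(1)}$ as a linear combination of the vertices of the simplex $\Delta^{(1)}_n$ with coefficients $q_w^{(n)}$.

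Finally, I would verify that these coefficients are genuine barycentric coordinates. By Lemma \ref{formula_for_qn} the vector $\ov q^{(n)}$ is a probability vector, so $q_w^{(n)} \geq 0$ and $\sum_{w \in V_n} q_w^{(n)} = 1$; hence the displayed identity represents $\ov q^{(1)}$ as a convex combination of the vertices $\ov y^{(n)}(w)$. The nonsingularity of the matrices $F_1, \ldots, F_{n-1}$ guarantees that these $k$ vertices are affinely independent and span the simplex $\Delta^{(1)}_n$, so the convex representation is unique; therefore the numbers $(q_w^{(n)} : w \in V_n)$ are precisely the barycentric coordinates of $\ov q^{(1)}$ in $\Delta^{(1)}_n$. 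I do not expect any serious obstacle here: the entire argument is a one-line linear computation, and the only place where the standing Assumption enters is in ensuring that the $\ov y^{(n)}(w)$ are affinely independent, which is exactly what makes the barycentric coordinates well defined and unique.
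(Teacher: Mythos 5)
Your proposal is correct and follows essentially the same route as the paper: iterate $\ov q^{(m)} = F_m^T \ov q^{(m+1)}$, expand $\ov q^{(n)}$ in the standard basis, and push the product $F_1^T\cdots F_{n-1}^T$ through the sum to land on $\sum_{w} q_w^{(n)}\ov y^{(n)}(w)$. Your closing remarks on nonnegativity, normalization, and affine independence only make explicit what the paper leaves implicit, so there is nothing to correct.
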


\begin{proof}
Indeed, we have the following chain of equalities that proves the lemma.
\begin{align}\label{1.3a}
\ov q^{(1)} &=(F_1^T \cdots F_{n-1}^T)
(\ov q^{(n)})\nonumber \\
 &= (F_1^T \cdots F_{n-1}^T)
\left(\sum_{w \in V_n}q_w^{(n)}\ov e^{(n)}(w)\right)\nonumber\\
 &= \sum_{w \in V_n}q_w^{(n)} (F_1^T \cdots
 F_{n-1}^T)(\ov e^{(n)}(w))\\
 \nonumber
 &= \sum_{w \in V_n}q_w^{(n)} \ov y^{(n)}(w).
\end{align}
\end{proof}

Observe that relation (\ref{1.3a}) uniquely defines
 the numbers $q^{(n)}_w$ by the vector $q^{(1)} \in
 \Delta_{\infty}^{(1)}$.

{\subsection{Subdiagrams of Bratteli diagrams and ergodic invariant
 measures}
In this subsection, we study  connections between vertex subdiagrams
of a Bratteli diagram $B$ and  ergodic invariant measures on $B$.

By a \textit{Bratteli subdiagram,} we mean a Bratteli diagram $B'$ that can
be obtained from
$B = (V, E)$ by removing some vertices and edges from each non-zero level of $B$.
 Then  $X_{B'} \subset X_B$ (for more details see e.g.
  \cite{AdamskaBezuglyiKarpelKwiatkowski2016}). In this paper, we will
  consider only the case of \emph{vertex subdiagrams}. To define such a
  subdiagram,  we start with a sequence $\overline W = \{W_n\}_{n>0}$
 of proper subsets $W_n$ of $V_n$ for all $n > 0$. The
 $n$-th level of the vertex subdiagram $B' =  (\ol W, \ol E)$ is formed  by
the vertices from $W_n$. To define the edge set  $\ov E =\{\ov E_n\}$
we take the restriction of $E_n$ on the set of edges connecting vertices
 from  $W_{n}$ and   $W_{n+1}$, $n \in \mathbb N$. This means that
 $e \in \ov E_n$ if the source and range of $e$ are
 in $W_{n}$ and $W_{n+1}$, respectively. Thus, the incidence  matrix
 $F'_n$ of $B'$ has the size $|W_{n+1}| \times |W_n|$, and  it can be seen
 as a submatrix of  $F_n$  corresponding to the
 vertices from $W_{n}$ and $W_{n+1}$. We say, in this case, that $\ol W
  = (W_n)$   is the \textit{support} of $B'$.

%
%

The following theorem will play an important role in our study of ergodic
measures and their supports. For a finite rank Bratteli diagram $B$, it
 describes how extreme points of $\Delta_{\infty}^{(1)}$ determine 
 subdiagrams of $B$.

\begin{thm}\label{prop subdiagrams}
Let $B$ be a Bratteli diagram of rank $k$, and let $B$ have $l$ probability
ergodic invariant measures, $1 \leq l \leq k$. Let $\{\ov y_1, ... , \ov y_l\}$ denote the extreme vectors in $\Delta_{\infty}^{(1)}$.
Then, after telescoping and
renumbering  vertices, there exist exactly $l$ disjoint subdiagrams $B_i$
(they share no vertices other than the root) with the corresponding sets of vertices $\{V_{n,i}\}_{n = 0}^{\infty}$
such that

(a) for every $i = 1,...,l$ and any $n,m >0$, $|V_{n,i}| = |V_{m,i}| > 0$,
while the set $V_{n,0} = V_n \setminus \bigsqcup\limits_{i = 1}^l V_{n,i}$ may be, in particular, empty;

(b) for any $i = 1, \ldots, l$ and any choice of $v_n \in V_{n,i}$, the extreme vectors $\ov y^{(n)}
(v_n) \in \Delta_n^{(1)}$ converge to the extreme vector $\ov y_i \in
\Delta_{\infty}^{(1)}$.

In general, the diagram $B$ can have up to $k - l$ disjoint subdiagrams
$B'_j$ with vertices $\{V'_{n,j}\}_{n = 0}^{\infty}$ such that they are
 also disjoint with subdiagrams $B_i$ and for any $w_n \in V'_{n,j}$, the
  extreme vectors $\ov y^{(n)}(w_n) \in \Delta_n^{(1)}$ converge to a
  non-extreme vector $\ov z \in \Delta_{\infty}^{(1)}$.
\end{thm}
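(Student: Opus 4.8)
The plan is to analyze the limiting behavior of the extreme vectors $\ov y^{(n)}(w)$ of the simplices $\Delta^{(1)}_n$ as $n \to \infty$, using the fact established in Lemma~\ref{remar3} that extreme points of $\Delta^{(1)}_\infty$ arise as limits of extreme points of the prelimit simplices. Since $B$ has rank $k$, for each $n$ there are exactly $k$ extreme vectors $\{\ov y^{(n)}(w) : w \in V_n\}$, one per vertex. The starting observation is that $\mathbb{R}^k$ is finite-dimensional and each $\ov y^{(n)}(w)$ lies in the compact simplex $\Delta^{(1)}$; hence along a subsequence of levels each of the $k$ coordinate-labelled families of vectors converges. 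First I would telescope the diagram so that $|V_n| = k$ for all $n$, and then telescope further along a subsequence of levels so that for each fixed ``slot'' the vectors $\ov y^{(n)}(w)$ converge; by Lemma~\ref{remar3} the limits lie in $\Delta^{(1)}_\infty$, and the limits that are extreme must exhaust $\{\ov y_1, \dots, \ov y_l\}$ since every extreme point of $\Delta^{(1)}_\infty$ is approximated arbitrarily well by extreme points of $\Delta^{(1)}_n$.

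The key combinatorial step is to partition the vertices of each level according to which limit their associated extreme vector approaches. The plan is to define, after passing to the telescoped subsequence, $V_{n,i}$ to be the set of vertices $v \in V_n$ for which $\ov y^{(n)}(v)$ is within a shrinking tolerance of the extreme point $\ov y_i$, and $V'_{n,j}$ to be the sets of vertices whose extreme vectors approach non-extreme points $\ov z$ of $\Delta^{(1)}_\infty$. The remaining vertices go into $V_{n,0}$. To make the partition well defined and stable in $n$, I would use that the $l$ extreme points $\ov y_1, \dots, \ov y_l$ are separated by a positive distance $2\delta$, choose the tolerance below $\delta$, and invoke Lemma~\ref{lem (n+1)-th simplex in n-th}: the relation $\ov y^{(n+1)}(v) = \sum_{w \in V_n} f^{(n)}_{vw}\, \ov y^{(n)}(w)$ shows that $\ov y^{(n+1)}(v)$ is a convex combination of the level-$n$ extreme vectors. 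Combined with the stochasticity of $F_n$ and a telescoping that forces the off-diagonal mass between distinct clusters to decay, this yields that for large $n$ a vertex in cluster $i$ must draw almost all of its mass from cluster $i$ at the previous level, so the cluster sizes stabilize, giving $|V_{n,i}| = |V_{m,i}|$ and establishing that the subdiagrams $B_i$ on $\bigsqcup_n V_{n,i}$ are genuine disjoint vertex subdiagrams.

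The counting that each cluster is nonempty and that there are exactly $l$ of them proceeds as follows. Each $\ov y_i$ is extreme in $\Delta^{(1)}_\infty$, so by Lemma~\ref{remar3} there is a vertex at each large level whose extreme vector is close to $\ov y_i$; thus every cluster $V_{n,i}$ is nonempty, proving $|V_{n,i}| > 0$. Conversely, by Proposition~\ref{decomposition_q_n_by_q_n+m}, every $\ov q^{(1)} \in \Delta^{(1)}_\infty$ is a convex combination $\sum_{v \in V_{m+1}} q_v^{(m+1)} \ov g^{(m+1,1)}(v)$ of the extreme vectors, so in the limit every point of $\Delta^{(1)}_\infty$ is a convex combination of the cluster limits $\{\ov y_i\}$ together with the non-extreme limits $\{\ov z\}$; since a simplex with $l$ extreme points cannot be spanned using fewer than $l$ extreme limits, and any non-extreme $\ov z$ is itself a convex combination of the $\ov y_i$, we get exactly $l$ extremal clusters. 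The bound of at most $k-l$ further subdiagrams $B'_j$ is immediate because the total number of clusters (extremal plus non-extremal, plus the residual $V_{n,0}$) is at most $k = |V_n|$.

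The hard part will be the stability argument in part~(a): showing that after telescoping the cluster sizes $|V_{n,i}|$ are genuinely constant in $n$ and that the clusters are honestly disjoint across all levels, rather than merely eventually or approximately so. The delicate point is that Lemma~\ref{lem (n+1)-th simplex in n-th} only gives that $\ov y^{(n+1)}(v)$ is a convex combination of the previous level's vectors; to conclude that an index-$i$ vertex pulls essentially all its weight from index-$i$ vertices at the prior level, one must control the ``leakage'' coefficients $f^{(n)}_{vw}$ for $w$ outside cluster $i$, which requires choosing the telescoping subsequence carefully (so that the relevant products of $F_n$ concentrate mass inside clusters) and simultaneously keeping the separation $\delta$ between the $\ov y_i$ uniform. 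Managing this interplay between the metric separation of the extreme limits and the telescoping rate is where the argument demands the most care, and it is precisely the technique refined in \cite{BezuglyiKwiatkowskiMedynetsSolomyak2013} and \cite{AdamskaBezuglyiKarpelKwiatkowski2016} that I would lean on.
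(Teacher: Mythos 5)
Your overall strategy --- track the limits of the extreme vectors $\ov y^{(n)}(v)$, use compactness of $\Delta^{(1)}$ together with successive telescopings to make every such sequence converge, invoke Lemma~\ref{remar3} to see that every $\ov y_i$ is realized as such a limit and Lemma~\ref{remar1} to see that every limit lies in $\Delta^{(1)}_{\infty}$, and then sort vertices by their limits --- is exactly the paper's. The nonemptiness of the clusters, the exhaustion of $\{\ov y_1,\dots,\ov y_l\}$, and the bound $k-l$ on the number of non-extremal clusters are handled essentially as in the paper.

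Where the proposal has a genuine gap is part~(a). You define $V_{n,i}$ by a proximity threshold (``within a shrinking tolerance of $\ov y_i$'') and then identify, as the hard part, the need to prove that these sets have constant cardinality, proposing to control the off-cluster mass $\sum_{w\notin V_{n,i}} f^{(n)}_{vw}$ via Lemma~\ref{lem (n+1)-th simplex in n-th} and a carefully chosen telescoping. This step is both unnecessary and unjustified. It is unnecessary because the paper builds the clusters slot-wise: for each of the $k$ positions one chooses a sequence of vertices $u(n)\in V_n$, one per level, telescopes so that $\ov y^{(n)}(u(n))$ converges, and assigns the entire sequence to the cluster of its limit; since each of the $k$ sequences lands in exactly one cluster, $|V_{n,i}|$ is constant in $n$ by construction, with no analysis of the incidence matrices at all (and ``disjoint subdiagrams'' here means only disjoint vertex sets, so no mass estimate is needed for disjointness either). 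It is unjustified because the concentration statement you invoke --- that a vertex in cluster $i$ draws almost all of its row mass from cluster $i$ at the previous level --- is precisely Theorem~\ref{estimate1}, which the paper proves \emph{after} and \emph{from} Theorem~\ref{prop subdiagrams}, using the partition and the linear independence of the differences $\ov y_i-\ov y_j$. Leaning on it to establish the partition would run the logic backwards, and your proposal supplies no independent argument for it. Replace the threshold definition by the slot-wise assignment and part~(a) becomes immediate; the rest of your argument then goes through.
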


\begin{proof} In the proof, we describe the process of forming the 
vertex sets $\{V_{n,i}\}_{n = 1}^{\infty}$  and the 
corresponding subdiagrams $B_i$ for $i = 0, \ldots, l$. We observe that the
 way of assigning a vertex to a subdiagram is not necessary unique and
 depends on the telescoping of the diagram. 
 
 For every $i = 1,\ldots,l$, we
  can take, by Lemma~\ref{remar3},  a sequence of vectors 
  $\ov y^{(n)}(w_i) \in \Delta_n^{(1)}$ (and
 corresponding vertices $w_i = w_i(n) \in V_n$)
such that $||\ov y^{(n)}(w_i) - \ov y_i|| \rightarrow 0$ as $n \rightarrow
\infty$. If $n$ is sufficiently large, then the vectors 
$\{\ov y^{(n)}(w_i)\}_{i = 1}^{l}$ (and hence the vertices $\{w_i(n)\}_{i = 
1}^{l}$) are disjoint. Therefore, we can choose the beginnings of the 
sequences $\{w_i(n)\}_{n = 1}^{\infty}$ in such a way that all vertices 
$\{w_i(n)\}_{i = 1}^{l}$ are disjoint for all $n$ and the limits of the 
sequences $\{\ov y^{(n)}(w_i)\}_{n = 1}^{\infty}$ stay unchanged. We set 
$w_i(n) \in V_{n,i}$. Thus, at this stage of construction, each set $V_{n,i}$, 
$i = 1, \ldots, l$ consists of a single point.

Now for every $n$, pick up any vertex $u = u(n)$ in $V_n \setminus 
\{w_i(n)\}_{i=1}^l$.
Since the decreasing sequence of simplices $\Delta_{n}^{(1)}$ lies in the
compact set $\Delta^{(1)}$, the sequence $\{\ov y^{(n)}(u)\}_{n = 1}
^{\infty}$ has a convergent subsequence $\{\ov y^{(n_m)}(u)\}_{m = 1}
^{\infty}$. We recall that simplices $\Delta_n^{(1)}$ are decreasing,
so that we may work only with the simplices
$\{\Delta_{n_m}^{(1)}\}_{m = 1}^\infty$ and the limiting simplex
$\Delta_\infty^{(1)}$. For the corresponding Bratteli diagram, this means
that we apply telescoping with respect to the levels $\{n_m\}_{m=1}^\infty$. 
We note that the operation of telescoping does not change the limits of the
 converging sequences $\ov y^{(n)}(w_i(n))$, since the telescoping 
 corresponds to the picking a subsequence $\ov y^{(n_m)}(w_i(n_m))$.
Hence, after telescoping, we have the sequence of vertices $u(n) \in V_n
 \setminus \{w_i(n)\}_{i=1}^l$ such that there exists 
$$
\lim_{n \rightarrow \infty} \ov y^{(n)}(u(n)) = \ov z,
$$
where $\ov z \in \Delta_{\infty}^{(1)}$ (by Lemma~\ref{remar1}). If $\ov z 
= \ov y_i$ for some $i = 1, \ldots, l$, then we set $u(n) \in V_{n,i}$. 
Otherwise, we set $u(n) \in V_{n,0}$.
 Now, for every $n$, pick a vertex $u' \in V_n \setminus (\{w_i\}_{i=1}^l
  \cup \{u\})$ and apply the same procedure again. Since $B$ is of 
 finite rank $k$,
  we will use at most $k - l$ telescopings to finally form the sets $\{V_{n,i}\}_{i = 0}^{l}$. 
In general, the set $V_{n,0}$ may be
 empty. 
If $V_{n,0}$ is not empty, we may also divide it into disjoint subsets 
$\{V'_{n,j}\}$ depending on the limits of 
$\{\ov y^{(n)}(u(n))\}_{n=1}^{\infty}$, though we will not use such a 
partition later. The theorem is proved.
 \end{proof}

Renumber the vertices
   of the diagram such that in every $V_n$ first come the vertices of
$V_{n,1}$, then the vertices of $V_{n,2}, \ldots, V_{n,l}$ and, at last, the
vertices of $V_{n,0}$. Then $F_n$ have blocks $V_{n+1,j} \times
 V_{n,j}$ on the diagonal for $j = 0, 1, \ldots, n$.
Thus, after telescoping and renumbering  vertices, the stochastic incidence
matrices of the Bratteli diagram $B$ from Theorem \ref{prop subdiagrams} will look as follows
\begin{equation}\label{IncidenceMatrixForm}
F_n =\left(
  \begin{array}{ccccccc}
    F^{(n)}_{B_1} & * & \cdots & * & * \\
    * & F^{(n)}_{B_2} & \cdots & * & * \\
    \vdots & \vdots & \ddots & \vdots & \vdots\\
    * & * & \cdots & F^{(n)}_{B_l} & *\\
    * & * & \cdots & * & F^{(n)}_{B_0} \\
  \end{array}
\right),
\end{equation}
where $F^{(n)}_{B_i}$ are submatrices of the matrix $F_n$ of the size
$|V_{n+1,i}| \times |V_{n,i}|$ for $i = 0,\ldots,l$ and symbol $*$
represents  other elements of the matrix.

The following theorem describes the properties of the submatrices
 $F^{(n)}_{B_i}$ for $i = 1,\ldots,l$.
We use here notation of Theorem \ref{prop subdiagrams}.

\begin{thm}\label{estimate1}
Suppose that the Bratteli diagram $B$ is as in Theorem
\ref{prop subdiagrams}. Then, for every $i \in \{1, \ldots, l\}$ and $v \in V_{n+1,i}$, the
 following property holds:
$$
\sum_{w \in V_{n,i}} f^{(n)}_{vw} \rightarrow 1.
$$
\end{thm}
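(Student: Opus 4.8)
The plan is to exploit the convex-combination identity of Lemma~\ref{lem (n+1)-th simplex in n-th} together with the convergence of extreme vectors furnished by Theorem~\ref{prop subdiagrams}. Fix $i \in \{1,\ldots,l\}$ and a sequence of vertices $v = v(n) \in V_{n+1,i}$, and set $s_i(n) = \sum_{w \in V_{n,i}} f^{(n)}_{v(n)w}$. Since $s_i(n) \in [0,1]$, it suffices to show that every convergent subsequence of $(s_i(n))$ has limit $1$. By Lemma~\ref{lem (n+1)-th simplex in n-th} and the row-stochasticity of $F_n$, we have the convex decomposition
$$
\ov y^{(n+1)}(v(n)) = \sum_{w \in V_n} f^{(n)}_{v(n)w}\, \ov y^{(n)}(w),
$$
and by Theorem~\ref{prop subdiagrams}(b) the left-hand side converges to the extreme vector $\ov y_i \in \Delta_{\infty}^{(1)}$.

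First I would set up the bookkeeping. Because the diagram has already been telescoped as in Theorem~\ref{prop subdiagrams}, the block sizes $|V_{n,j}|$ are constant in $n$, so I may speak of a fixed finite collection of vertex positions at each level, partitioned into the blocks $V_{n,0}, V_{n,1}, \ldots, V_{n,l}$; write $w_p(n)$ for the vertex at position $p$ on level $n$. Passing to a subsequence, I arrange that for each position $p$ the coefficient $f^{(n)}_{v(n)w_p(n)}$ converges to some $c_p \geq 0$ (these lie in the compact interval $[0,1]$) and that $\ov y^{(n)}(w_p(n))$ converges to some $\ov z_p \in \Delta_{\infty}^{(1)}$, the latter limits existing and lying in $\Delta_{\infty}^{(1)}$ by Lemma~\ref{remar1} and the convergence statements of Theorem~\ref{prop subdiagrams}. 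Passing to the limit in the displayed identity along this subsequence yields
$$
\ov y_i = \sum_p c_p\, \ov z_p, \qquad c_p \geq 0, \quad \sum_p c_p = 1,
$$
a genuine convex combination of points of $\Delta_{\infty}^{(1)}$, the sum being finite since the number of positions equals the rank $k$.

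The decisive step is to identify the limits $\ov z_p$ block by block and invoke extremality of $\ov y_i$. By Theorem~\ref{prop subdiagrams}(b), positions $p$ in block $i$ give $\ov z_p = \ov y_i$; positions in a block $j \neq i$ with $j \geq 1$ give $\ov z_p = \ov y_j \neq \ov y_i$; and positions in $V_{n,0}$ give limits $\ov z_p$ that are non-extreme vectors of $\Delta_{\infty}^{(1)}$, hence distinct from the extreme point $\ov y_i$. Since $\ov y_i$ is an extreme point of the convex set $\Delta_{\infty}^{(1)}$, the relation $\ov y_i = \sum_p c_p \ov z_p$ forces $\ov z_p = \ov y_i$ for every $p$ with $c_p > 0$; therefore $c_p = 0$ for all positions outside block $i$. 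Consequently $\sum_{p \in \mathrm{block}\, i} c_p = 1$, that is, $s_i(n) \to 1$ along the subsequence. As this holds for every convergent subsequence of a bounded sequence, $s_i(n) \to 1$, which is the assertion.

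I expect the main obstacle to be the treatment of the block $V_{n,0}$: the whole argument hinges on the fact that every vertex of $V_{n,0}$ produces a limit strictly different from $\ov y_i$ (being non-extreme), which is precisely what lets extremality eliminate the off-diagonal mass. Care is also needed to guarantee that the subsequential limits $\ov z_p$ genuinely lie in $\Delta_{\infty}^{(1)}$ (via Lemma~\ref{remar1}) and to conduct the entire argument through subsequential limits rather than a further telescoping, since telescoping would alter the very matrices $F_n$ whose row sums we are estimating.
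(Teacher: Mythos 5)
Your proof is correct, and it rests on the same two ingredients as the paper's: the convex decomposition $\ov y^{(n+1)}(v) = \sum_{w \in V_n} f^{(n)}_{vw}\, \ov y^{(n)}(w)$ from Lemma~\ref{lem (n+1)-th simplex in n-th}, and the block-wise convergence of the extreme vectors supplied by Theorem~\ref{prop subdiagrams}. The difference lies in how the off-block mass is killed. The paper first expresses the fixed limit point $\ov y_j$ in barycentric coordinates with respect to $\{\ov y^{(n)}(w)\}_{w\in V_n}$, rearranges the identity so that the right-hand side is a non-negative combination of vectors converging to $\{\ov y_{i} - \ov y_j\}_{i\neq j}$ (and, for $V_{n,0}$, to non-negative combinations thereof), and deduces from linear independence that the coefficients vanish; it then transfers the conclusion to $\ov y^{(n+1)}(v)$ by continuity, the transferred coefficients being exactly $f^{(n)}_{vw}$. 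You instead work with $\ov y^{(n+1)}(v)$ directly, pass to subsequential limits of the coefficients, and invoke the bare definition of an extreme point of $\Delta^{(1)}_\infty$ to conclude $c_p=0$ off the $i$-th block. Your route is slightly cleaner at the delicate spot: the paper's phrase that the right-hand side converges to ``linearly independent non-zero vectors'' is not literally accurate when $V_{n,0}\neq\emptyset$ (the block-zero limits $\ov z(u)-\ov y_j$ are dependent on the $\ov y_i - \ov y_j$, and one must also use non-negativity of the coefficients), whereas your extremality argument needs only that each $\ov z_p$ for $p$ outside block $i$ is distinct from $\ov y_i$, which is exactly what Theorem~\ref{prop subdiagrams} guarantees. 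The price is that you obtain the conclusion along subsequences and must close with the standard ``every convergent subsequence has the same limit'' step, which you do. Your cautionary remarks about $V_{n,0}$ and about using Lemma~\ref{remar1} to place the subsequential limits inside $\Delta^{(1)}_\infty$ are both on target.
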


\begin{proof}
We recall what we know about the vectors
$\{\ov y^{(n)}(w)\}_{w \in V_n}$ (extreme points of the
simplex $\Delta^{(1)}_n$):

(a) they form a basis of $\mathbb{R}^k$ for any
 $n \in \mathbb{N}$,

 (b) $\ov y^{(n)}(w) \rightarrow \ov y_i$ as $n \rightarrow \infty$,
 where $w \in V_{n,i}$, $i = 1, \ldots,l$,

  (c)   $\lim_{n   \rightarrow \infty} \ov y^{(n)}(w)$ does not belong to the
  set  $\{\ov y_i\}_{i = 1}^l$ as $w \in V_{n,0}$.

For every $j = 1, ..., l,$ we represent $\ov y_j \in   \Delta_n^{(1)}$ as a
 barycentric combination of $\ov y^{(n)}(w)$:
\begin{equation}\label{eq baryc comb}
\ov y_j = \sum_{w \in V_n}\alpha_w^{(n)} \ov y^{(n)}(w),
\end{equation}
where $\sum_{w \in V_n} \alpha_w^{(n)} = 1$ and $\alpha_w^{(n)} \geq 0$ for all
$w$.

We will show that $\alpha_w^{(n)} \rightarrow 0$ as
$n \rightarrow \infty$ for $w \in V_n \setminus V_{n,j}$.
Indeed, relation (\ref{eq baryc comb}) can be written in the form
$$
\ov y_j - \sum_{w \in V_{n,j}}\alpha_w^{(n)} \ov y^{(n)}(w) = \sum_{u
\notin V_{n,j}}\alpha_u^{(n)} \ov y^{(n)}(u).
$$
Since $\sum_{w \in V_n} \alpha_w^{(n)} = 1$, we get
\begin{equation}\label{vspom}
\sum_{w \in V_{n,j}}\alpha_w^{(n)} (\ov y_j - \ov y^{(n)}(w)) =
\sum_{u \notin V_{n,j}}\alpha_u^{(n)} (\ov y^{(n)}(u) - \ov
y_j).
\end{equation}
As far as $\lim_{n \rightarrow \infty}\ov y^{(n)}(w) = \ov y_j$ for $w \in
V_{n,j}$, the left-hand side of relation (\ref{vspom}) tends to zero as $n\to
\infty$. In the right-hand side of  (\ref{vspom}), we obtain that,
for $u \in V_{n,i}$ and $i = \{1,\ldots l\} \setminus \{j\}$,
 $$
 \ov y^{(n)}(u)-  \ov y_j \rightarrow \ov y_i - \ov y_j, \qquad
 n \rightarrow \infty.
 $$
For $u \in V_{n,0}$, we have $\ov y^{(n)}(u) \rightarrow \ov z(u)$, where
$\ov z(u) \in  \Delta_\infty^{(1)}$, and $\ov z(u)$ does not coincide with
 any of the extreme points $\{\ov y_i\}_{i = 1}^l$. Hence $\ov z(u) =
 \sum_{i = 1}^l
\beta_i \ov y_i$, where all $\beta_i$ are non-negative, $\sum_{i=1}^l
 \beta_i = 1$, and at least two of coefficients from the set
  $\{\beta_i\}_{i = 1}^l$ are strictly positive. Thus,
$$
\ov z(u) - \ov y_j = \sum_{i = 1}^l \beta_i \ov y_i - \sum_{i = 1}^l
\beta_i \ov y_j = \sum_{i \neq j} \beta_i (\ov y_i - \ov y_j) \neq 0.
$$
Therefore the right-hand side of (\ref{vspom}) is a linear combination of
vectors which converge to linearly independent non-zero vectors
$\{\ov y_i - \ov y_j\}_{i \neq j}$. It follows that all the coefficients
$\{\alpha_u^{(n)}\}_{u \notin V_{n,j}}$ converge to zero as $n \to \infty$.
This proves that
$\sum_{w \in  V_{n,j}} \alpha_w^{(n)} \rightarrow 1$ as $n \to \infty$.

Since $\ov y^{(n+1)}(v)$
 approaches arbitrary close to $\ov y_j$ for $v\in
 V_{n+1,j}$ and  sufficiently  large $n$, the same result holds for
 the coefficients of barycentric  combination for
 $\ov y^{(n+1)}(v)\in \Delta_{n+1}^{(1)}$.  By~(\ref{1.2a}), the
   corresponding combination for $\ov
 y^{(n+1)}(v)$ has coefficients $f_{vw}^{(n)}$. Hence, for every
 $v \in V_{n +1,j}$ we have
$$
\sum_{w \in V_{n,j}} f_{vw}^{(n)} \rightarrow 1
$$
and the proof is complete.
\end{proof}

As a consequence of Theorem~\ref{estimate1}, we get the following corollary:

\begin{corol}
In notation of Theorem \ref{prop subdiagrams}, the following property holds for every $i,j \in \{1, \ldots, l\}$:
$$
\frac{1}{|V_{n+1,j}|}\sum_{v \in V_{n+1,j}}\sum_{w \in V_{n,i}}
f^{(n)}_{vw} \rightarrow \delta_{ij},
$$
where $\delta_{ij} = 1$ if $i = j$; $\delta_{ij} = 0$ if $i \neq j$ and
$\dfrac{1}{|V_{n+1,j}|}$ does not depend on $n$ and plays the role of a normalizing coefficient.

\end{corol}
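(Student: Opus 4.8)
The plan is to derive the corollary directly from Theorem~\ref{estimate1}, using only the row-stochasticity of $F_n$ and the constancy of the block cardinalities $|V_{n+1,j}|$ guaranteed by part~(a) of Theorem~\ref{prop subdiagrams}. First I would record the two structural facts I will use. By the construction in Theorem~\ref{prop subdiagrams}, each level decomposes as the disjoint union $V_n = \bigsqcup_{i=0}^{l} V_{n,i}$, and since $F_n$ is row-stochastic, every $v \in V_{n+1}$ satisfies $\sum_{i=0}^{l}\sum_{w \in V_{n,i}} f^{(n)}_{vw} = \sum_{w \in V_n} f^{(n)}_{vw} = 1$. Moreover $|V_{n+1,j}|$ does not depend on $n$, so the prefactor $1/|V_{n+1,j}|$ is a genuine constant and the quantity in the statement is an honest average of the row-block masses over the block $V_{n+1,j}$.

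Next I would treat the diagonal case $i = j$. Theorem~\ref{estimate1} asserts that $\sum_{w \in V_{n,j}} f^{(n)}_{vw} \to 1$ for $v \in V_{n+1,j}$, and (as in the last step of its proof, where $\ov y^{(n+1)}(v)$ approaches $\ov y_j$ simultaneously for all $v$ in the block) this convergence is uniform over the fixed finite number of vertices in $V_{n+1,j}$. Averaging a uniformly bounded collection of terms that all converge to the same limit therefore gives
\begin{equation*}
\frac{1}{|V_{n+1,j}|}\sum_{v \in V_{n+1,j}}\sum_{w \in V_{n,j}} f^{(n)}_{vw} \to 1 = \delta_{jj}.
\end{equation*}

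For the off-diagonal case $i \neq j$ I would argue by squeezing. Fix $v \in V_{n+1,j}$. Since all entries are non-negative and $V_{n,i} \subset V_n \setminus V_{n,j}$, stochasticity gives
\begin{equation*}
0 \leq \sum_{w \in V_{n,i}} f^{(n)}_{vw} \leq \sum_{w \in V_n \setminus V_{n,j}} f^{(n)}_{vw} = 1 - \sum_{w \in V_{n,j}} f^{(n)}_{vw},
\end{equation*}
and the right-hand side tends to $0$ by Theorem~\ref{estimate1}. Averaging over the finitely many $v \in V_{n+1,j}$ then yields the convergence of the stated average to $0 = \delta_{ij}$, which together with the diagonal case completes the proof.

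The only point requiring care, and hence the main (though minor) obstacle, is the passage from the pointwise convergence stated for individual vertices in Theorem~\ref{estimate1} to the uniform convergence over the block $V_{n+1,j}$ needed to control the average. This is not a genuine difficulty: by part~(a) of Theorem~\ref{prop subdiagrams} the cardinality $|V_{n+1,j}|$ equals a fixed constant independent of $n$, so an average of that many terms, each bounded by $1$ and converging to a common limit, converges to the limit, and no further estimates are needed.
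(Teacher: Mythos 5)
Your proof is correct and follows exactly the route the paper intends: the paper states this corollary without proof, simply as a consequence of Theorem~\ref{estimate1}, and your argument (row-stochasticity plus the convergence $\sum_{w \in V_{n,j}} f^{(n)}_{vw} \to 1$ for $v \in V_{n+1,j}$, with the squeeze for $i \neq j$ and averaging over the fixed finite block) is the straightforward derivation being alluded to. Your remark on uniformity over the block is a sensible precaution and is handled correctly.
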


For the matrices $G_{(n+m,n)} = F_{n+m} \cdots F_{n}$, the equality
\begin{equation}\label{meaning_of_product_matrix}
\ov y^{(n+m+1)}(u) = \sum_{w \in V_n} g_{uw}^{(n+m,n)}\ov y^{(n)}(w)
\end{equation}
can be established similarly to~(\ref{1.2a}).
Moreover, by the same argument as in Theorem~\ref{estimate1},
we can prove
 parts (1) and (2) of the following proposition (we leave the details to the
 reader). We note that statement (3) of this proposition  is a straightforward
  corollary of Theorem \ref{prop subdiagrams}.

\begin{corol}\label{estimate2}
In notation of Theorem \ref{prop subdiagrams}, the following
properties hold: for every $\varepsilon > 0$ there exists $N \in \mathbb{N}$
 such that for every $n > N$,  any $j = 1,\ldots,l$, and any $m = 0,1,
 \ldots$

(1)
$$
\min_{u \in V_{n+m+1,j}}\sum_{w \in V_{n,j}} g_{uw}^{(n+m,n)} \geq 1 -
\varepsilon;
$$

(2)
$$
\max_{u \in V_{n+m+1,j}}\sum_{w \in V_n \setminus V_{n,j}} g_{uw}^{(n
+m,n)} \leq \varepsilon,
$$
where $g_{uw}^{(n+m,n)} $ are entries of  $G_{(n+m,n)}$;

(3) there exist $C > 0$ and $N \in \mathbb{N}$ such that for every $n
> N$ and every $j = 1,\ldots,l$ we have
$$
d(\ov y^{(n)}(w), \ov y_j) \geq C, \qquad  w \in V_{n,0},
$$
and
$$
\max_{w\in V_{n,0}}d(\ov y^{(n)}(w), \Delta^{(1)}_\infty) \rightarrow 0
$$
as $n\rightarrow \infty$.
\end{corol}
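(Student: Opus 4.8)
The plan is to establish (1) and (2) together and then read off (3) directly from Theorem~\ref{prop subdiagrams}. First I observe that $G_{(n+m,n)} = F_{n+m}\cdots F_n$ is a product of row-stochastic matrices, hence itself row-stochastic, so $\sum_{w\in V_n} g_{uw}^{(n+m,n)} = 1$ for every $u\in V_{n+m+1}$; thus (1) and (2) are equivalent, and it suffices to prove (2). By~(\ref{meaning_of_product_matrix}) the numbers $g_{uw}^{(n+m,n)}$ are exactly the barycentric coordinates of $\ov y^{(n+m+1)}(u)$ in the basis $\{\ov y^{(n)}(w):w\in V_n\}$ of $\Delta^{(1)}_n$, so the argument will closely follow the proof of Theorem~\ref{estimate1}, with the fixed vector $\ov y_j$ there replaced by the moving vectors $\ov y^{(n+m+1)}(u)$.

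Fix $j\in\{1,\dots,l\}$. Using $\sum_w g_{uw}^{(n+m,n)}=1$, I would write
\begin{equation*}
\ov y^{(n+m+1)}(u) - \ov y_j = \sum_{w\in V_{n,j}} g_{uw}^{(n+m,n)}\bigl(\ov y^{(n)}(w)-\ov y_j\bigr) + \sum_{w\notin V_{n,j}} g_{uw}^{(n+m,n)}\bigl(\ov y^{(n)}(w)-\ov y_j\bigr).
\end{equation*}
By Theorem~\ref{prop subdiagrams}(b) the convergence $\ov y^{(N)}(v)\to\ov y_j$ is uniform over $v\in V_{N,j}$, so for every $\delta>0$ there is $N_0$ with $\|\ov y^{(N)}(v)-\ov y_j\|<\delta$ whenever $N>N_0$ and $v\in V_{N,j}$. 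Taking the base level $n>N_0$ forces $n+m+1>N_0$ for every $m\ge 0$; hence the left-hand side has norm $<\delta$ uniformly in $m$ and in $u\in V_{n+m+1,j}$, and, since the $g_{uw}^{(n+m,n)}$ are non-negative with sum at most $1$, the first sum on the right also has norm $<\delta$. Consequently the second sum has norm $<2\delta$, uniformly in $m$ and $u$.

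The decisive step is to convert this smallness of a \emph{vector} into smallness of the \emph{coefficient sum} $\sum_{w\notin V_{n,j}} g_{uw}^{(n+m,n)}$. As in Theorem~\ref{estimate1}, the limit directions are controlled: for $w\in V_{n,i}$ with $i\neq j$ one has $\ov y^{(n)}(w)-\ov y_j\to \ov y_i-\ov y_j$, while for $w\in V_{n,0}$ one has $\ov y^{(n)}(w)-\ov y_j\to\ov z(w)-\ov y_j=\sum_{i\neq j}\beta_i(w)(\ov y_i-\ov y_j)$ with $\beta_i(w)\ge 0$ and coordinate sum $1-\beta_j(w)$ bounded below by a uniform $\eta>0$, because there are only finitely many non-extreme limits $\ov z(w)$ and each keeps $\beta_j(w)$ away from $1$. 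Since the vectors $\{\ov y_i-\ov y_j\}_{i\neq j}$ are linearly independent, the coordinate functionals in this basis are bounded by some $C'$, and each limit direction has non-negative coordinates summing to at least $\eta$; writing the second sum in this basis and absorbing the $o(1)$ perturbation coming from $\ov y^{(n)}(w)-\ov y_j\to v_w^{*}$, I obtain $\eta\sum_{w\notin V_{n,j}} g_{uw}^{(n+m,n)} \le C'\bigl\|\sum_{w\notin V_{n,j}} g_{uw}^{(n+m,n)}(\ov y^{(n)}(w)-\ov y_j)\bigr\| < 2C'\delta + o(1)$. Choosing $\delta$ so that $2C'\delta/\eta<\varepsilon$ yields (2) with $N$ uniform in $m\ge 0$, $u\in V_{n+m+1,j}$, and (by finiteness of $l$) in $j$; statement (1) then follows by stochasticity. \textbf{I expect this conversion step to be the main obstacle}, since it requires that the perturbed directions $\ov y^{(n)}(w)-\ov y_j$ retain the positive independence of their limits uniformly — exactly where the finite rank of $B$ and the finiteness of the set of limit points $\{\ov z(w)\}$ are essential.

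Finally, for (3) I would argue directly from Theorem~\ref{prop subdiagrams}. For $w\in V_{n,0}$ the sequence $\ov y^{(n)}(w)$ converges to a point $\ov z(w)\in\Delta^{(1)}_\infty$ that is \emph{not} extreme, hence differs from every $\ov y_j$. As there are only finitely many limits $\ov z(w)$ and finitely many extreme points, $C_0:=\min_{w,j} d(\ov z(w),\ov y_j)>0$; for $n$ large $\|\ov y^{(n)}(w)-\ov z(w)\|<C_0/2$, giving $d(\ov y^{(n)}(w),\ov y_j)\ge C_0/2=:C$. The second claim is immediate, since $d(\ov y^{(n)}(w),\Delta^{(1)}_\infty)\le \|\ov y^{(n)}(w)-\ov z(w)\|\to 0$ uniformly over the finitely many $w\in V_{n,0}$, so the maximum tends to $0$. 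This completes the plan.
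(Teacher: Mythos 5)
Your proof is correct and follows essentially the route the paper intends: the paper proves (1)--(2) only by reference ("by the same argument as in Theorem~\ref{estimate1}") and calls (3) a straightforward consequence of Theorem~\ref{prop subdiagrams}, and your argument is exactly that adaptation — decomposing $\ov y^{(n+m+1)}(u)$ via the barycentric coefficients $g_{uw}^{(n+m,n)}$ from (\ref{meaning_of_product_matrix}), splitting over $V_{n,j}$ and its complement, and using the positive independence of the limit directions $\{\ov y_i-\ov y_j\}_{i\neq j}$ to bound the off-block coefficient sum. Your explicit treatment of the "conversion step" (bounding $\sum_{w\notin V_{n,j}}g_{uw}^{(n+m,n)}$ by the norm of the corresponding vector sum, uniformly in $m$) supplies precisely the detail the paper leaves to the reader, and your argument for (3) matches the paper's intended derivation.
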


\subsection{The main theorem I} One of our main results, Theorem
 \ref{main1}, is
proved in this subsection. This result holds for a class of Bratteli diagrams
which  satisfy the following condition.

\begin{defin}\label{def vanishing blocks}
For a Bratteli diagram $B$,  we say that a sequence of proper subsets
$U_n \subset V_n$ defines \textit{blocks of vanishing weights
(or vanishing blocks)} in the stochastic incidence matrices $F_n$ if
$$
 \sum_{w \in U^c_n, v \in U_{n+1}} f_{vw}^{(n)} \rightarrow 0, \qquad
 n \rightarrow \infty
 $$
 where $U_n^c = V_n \setminus U_n$.

If additionally, for every sequence of vanishing blocks $(U_n)$, there exists
a constant  $0 < C_1 < 1$ such that, for sufficiently large $n$,
\begin{equation}\label{eq vanish blocks}
\min_{v\in U_{n+1}} \sum_{u \in U_{n}^c} f_{vu}^{(n)}
\geq C_1 \max_{v\in U_{n+1}} \sum_{u \in U_{n}^c}  f_{vu}^{(n)},
\end{equation}
then we say that the stochastic incidence matrices $F_n$ of $B$ have
 \textit{regularly vanishing blocks}.
\end{defin}

Throughout the paper, we develop the approach used in \cite{BezuglyiKwiatkowskiMedynetsSolomyak2010, BezuglyiKwiatkowskiMedynetsSolomyak2013, AdamskaBezuglyiKarpelKwiatkowski2016} to study an ergodic invariant measure on a Bratteli diagram as the extension of a finite ergodic invariant measure defined on a uniquely ergodic subdiagram $B'$. The key property of this method uses the following fact: if the number of finite paths of length $n$ which lie inside the subdiagram $B'$ grows when $n$ tends to infinity much faster then the number of finite paths that end in the vertices of the same subdiagram but do not lie completely in the subdiagram, then the measure extension is finite, see e.g. Proposition~\ref{1.6b}. 
Below we assume that stochastic incidence matrices of a Bratteli diagram have the property of {\it regularly vanishing blocks} and apply it to the case when $U_n = V_{n,i}$ for some $i = 1, \ldots, l$. 
Then the blocks of the matrices corresponding to the edges that connect vertices from outside of the supporting subdiagram $B_i$ to the vertices of $B_i$ are the blocks of vanishing weights. 
The property of regularly vanishing blocks means that for all vertices of the subdiagram, 
the contribution of the corresponding rows of the stochastic incidence matrix is in some sense similar. For instance, the diagrams from Examples~\ref{ex 3.2}, \ref{non-simple-stat-ex} trivially satisfy this condition.

For the next theorem,  we will use  the following notation. Set
$$
\ov a_j^{(n)} = \frac{1}{|V_{n,j}|} \sum_{w \in V_{n,j}} \ov y^{(n)}(w),
\quad j = 0, 1, \ldots, l,
$$
where the subsets $V_{n,j}$ are defined as in Theorem
\ref{prop subdiagrams}.
Then $\ov a_j^{(n)} \in \Delta_{n,j}^{(1)} :=
\textrm{Conv}\{\ov y^{(n)}(w),
w \in V_{n,j}\}$, the convex hull of the set $\{\ov y^{(n)}(w),
w \in V_{n,j}\}$.   The sets $\Delta_{n,j}^{(1)}$ are subsimplices of
$\Delta_n^{(1)}$, $j = 0, 1, \ldots, l$.
We observe that, by Lemma~\ref{remar1},
 $$
 \max_{\ov a \in \Delta_{n,0}^{(1)}}
\textrm{dist}(\ov a, \Delta_{\infty}^{(1)}) \rightarrow 0
$$
as $n \rightarrow \infty$, where $\Delta_{\infty}^{(1)} =
\bigcap_{n=1}^{\infty}\Delta_n^{(1)}$.
\medskip

\begin{thm}\label{main1} Let $B$ be a Bratteli diagram of rank $k$ such
that the incidence matrices $F_n$ have the property of regularly vanishing
blocks, see Definition \ref{def vanishing blocks}. If $B$ has exactly
$l\ (1 \leq l \leq k)$ ergodic invariant probability measures, then, after
 telescoping, the set $V_n$ can be partitioned into subsets $\{V_{n,1},
 \ldots,V_{n,l},V_{n,0}\}$  such that

\smallskip
(a) $V_{n,i} \neq \emptyset$ for $i = 1,\ldots,l$;

\smallskip
(b) $|V_{n,i}|$ does not depend on $n$, i.e.,
$|V_{n,i}| = k_i$ for $i = 0,1, \ldots,l$ and $n \geq 1$;

\smallskip
(c) for $j = 1,\ldots,l$,
$$
\sum_{n = 1}^\infty \left(1 - \min_{v \in V_{n+1,j}}\sum_{w \in V_{n,j}}
f_{vw}^{(n)}\right)< \infty;
$$

\smallskip
(d)  for $j = 1,\ldots,l$,
$$
\max_{v,v' \in V_{n+1,j}} \sum_{w \in V_{n}} \left|f_{vw}^{(n)} -
f_{v'w}^{(n)}\right| \rightarrow 0
$$
as $n \rightarrow \infty$;

\smallskip

\ \ (e1)
for every $w \in V_{n,0}$
$$
\mathrm{vol}_{l} S(\ov a_1^{(n)}, \ldots, \ov a_l^{(n)}, \ov y^{(n)}(w))
 \rightarrow 0
$$
as $n \rightarrow \infty$, where $S$ is a simplex with extreme points
$\ov a_1^{(n)}, \ldots, \ov a_l^{(n)}, \ov y^{(n)}(w)$, and
$\mathrm{vol}_l(S)$ stands for the volume of $S$;

 (e2) for every $v \in V_{n+1,0}$ and  for sufficiently large $n$,
there exists some $C>0$ such that, for every $j = 1, \ldots,l$,
$$F_v^{(n,j)} =  \sum_{w  \in V_{n,j}}  f_{vw}^{(n)} < 1 -C.
$$

\end{thm}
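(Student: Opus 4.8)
The plan is to build everything on the partition furnished by Theorem~\ref{prop subdiagrams}. After telescoping so that $|V_n|=k$ for all $n$, that theorem already produces the sets $\{V_{n,1},\dots,V_{n,l},V_{n,0}\}$ with $|V_{n,i}|=|V_{m,i}|>0$ for $i=1,\dots,l$, which is exactly (a) and (b) (and $|V_{n,0}|=k-\sum_{i\geq1}|V_{n,i}|$ is then also constant, giving $k_0$). The point will be to read off (c)--(e) from the convergence $\ov y^{(n)}(w)\to\ov y_i$ for $w\in V_{n,i}$, together with Theorem~\ref{estimate1} and Corollary~\ref{estimate2}. A useful preliminary observation is that each block $B_i$ is \emph{uniquely ergodic}: since all extreme vectors $\ov y^{(n)}(w)$, $w\in V_{n,i}$, converge to the single point $\ov y_i$, the sub-simplex $\mathrm{Conv}\{\ov y^{(n)}(w):w\in V_{n,i}\}$ collapses to a point.

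For (d) I would run the ``only if'' argument of Theorem~\ref{uniq_erg} inside each block. Fixing $j$, the diameter of $\mathrm{Conv}\{\ov y^{(n+m)}(v):v\in V_{n+m,j}\}$ tends to $0$, so the differences $d^*(\ov g^{(n+m,n)}(v),\ov g^{(n+m,n)}(v'))$ for $v,v'\in V_{n+m+1,j}$ tend to $0$ as $m\to\infty$. Telescoping at levels with sufficiently large gaps turns these products into the new stochastic matrices and hence forces $\max_{v,v'\in V_{n+1,j}}\sum_{w\in V_{n,j}}|f_{vw}^{(n)}-f_{v'w}^{(n)}|\to0$; the complementary part $\sum_{w\notin V_{n,j}}|f_{vw}^{(n)}-f_{v'w}^{(n)}|$ is bounded by twice the escape mass, which vanishes by Theorem~\ref{estimate1}, so the full sum over $w\in V_n$ in (d) vanishes. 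Since there are only finitely many blocks, one common choice of gaps handles all $j$ at once; and because telescoping only groups the escape defects, this extra telescoping will not destroy (c) (the sum of defects can only decrease under grouping).

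The heart of the matter is (c). For the measure $\mu_j$ corresponding to $\ov y_j$, write $q_w^{(n)}=\mu_j(X_w^{(n)})$ and $Q_j^{(n)}=\sum_{w\in V_{n,j}}q_w^{(n)}$; by the proof of Theorem~\ref{estimate1} one has $Q_j^{(n)}\to1$. Using $q_w^{(n)}=\sum_v f_{vw}^{(n)}q_v^{(n+1)}$ and summing over $w\in V_{n,j}$ yields the balance relation
\[
Q_j^{(n+1)}-Q_j^{(n)}=E_n-I_n,\qquad E_n=\sum_{v\in V_{n+1,j}}q_v^{(n+1)}\sum_{w\notin V_{n,j}} f_{vw}^{(n)},
\]
where $I_n\geq0$ is the total inflow into block $j$ from its complement. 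The plan here is to invoke the finiteness of the extension of the (unique) measure from $B_j$: since $\mu_j$ is a genuine probability measure, Proposition~\ref{1.6b} and the technique of the cited papers give summability of the escape, i.e.\ $\sum_n E_n<\infty$. Finally the hypothesis of \emph{regularly vanishing blocks} converts this weighted (minimal) escape into the maximal escape appearing in (c): from Definition~\ref{def vanishing blocks} with $U_n=V_{n,j}$,
\[
E_n\ \geq\ \Big(\min_{v\in V_{n+1,j}}\sum_{w\notin V_{n,j}}f_{vw}^{(n)}\Big)Q_j^{(n+1)}\ \geq\ C_1\,Q_j^{(n+1)}\max_{v\in V_{n+1,j}}\sum_{w\notin V_{n,j}}f_{vw}^{(n)},
\]
and since $Q_j^{(n+1)}\to1$ this gives $1-\min_{v\in V_{n+1,j}}\sum_{w\in V_{n,j}}f_{vw}^{(n)}\leq \tfrac{2}{C_1}E_n$ for large $n$, whence (c). I expect this step --- controlling the inflow $I_n$, equivalently establishing finiteness of the measure extension --- to be the main obstacle, and it is precisely where the quantitative machinery of Section~\ref{sect 5} is needed.

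Parts (e1) and (e2) should fall out of Corollary~\ref{estimate2}(3). For (e1), the centroids satisfy $\ov a_i^{(n)}\to\ov y_i$, while for $w\in V_{n,0}$ the vectors $\ov y^{(n)}(w)$ approach $\Delta_\infty^{(1)}=\mathrm{Conv}\{\ov y_1,\dots,\ov y_l\}$; hence the $l+1$ points $\ov a_1^{(n)},\dots,\ov a_l^{(n)},\ov y^{(n)}(w)$ converge into the $(l-1)$-dimensional set $\Delta_\infty^{(1)}$, so the configuration becomes affinely degenerate and $\mathrm{vol}_l S\to0$ by continuity of the volume. For (e2), Corollary~\ref{estimate2}(3) keeps $\ov y^{(n)}(w)$ at distance $\geq C$ from every $\ov y_j$ for $w\in V_{n,0}$; were $F_v^{(n,j)}=\sum_{w\in V_{n,j}}f_{vw}^{(n)}$ close to $1$ for some $v\in V_{n+1,0}$, then by (\ref{1.2a}) and the convergence of the block-$j$ vectors we would get $\ov y^{(n+1)}(v)$ close to $\ov y_j$, contradicting the distance bound; quantifying this through $\|\ov y^{(n+1)}(v)-\ov y_j\|\leq \mathrm{(small)}+(1-F_v^{(n,j)})\,\mathrm{diam}(\Delta^{(1)})$ yields a uniform $1-F_v^{(n,j)}\geq C''>0$, which is (e2).
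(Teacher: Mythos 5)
Your treatment of (a), (b), (d), (e1) and (e2) tracks the paper's proof closely: (a), (b) are read off Theorem~\ref{prop subdiagrams}; (d) is the ``only if'' argument of Theorem~\ref{uniq_erg} run blockwise with a telescoping that handles all $j$ simultaneously; (e1) is the degeneration-of-volume argument from $d(\ov y^{(n)}(w),\Delta_\infty^{(1)})\to 0$; and (e2) is exactly the paper's Claim~1 (if $F_v^{(n,j)}$ were close to $1$ for $v\in V_{n+1,0}$, then by (\ref{1.2a}) the vector $\ov y^{(n+1)}(v)$ would be forced close to $\ov y_j$, contradicting Corollary~\ref{estimate2}(3)). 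These parts are fine.

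The genuine gap is in (c), and it is the heart of the theorem. Your balance relation $Q_j^{(n+1)}-Q_j^{(n)}=E_n-I_n$ and the final conversion of $\sum_n E_n<\infty$ into (c) via the regularly-vanishing-blocks inequality are both correct, but the summability $\sum_n E_n<\infty$ is precisely what has to be proved, and your justification does not supply it. The convergence $Q_j^{(n)}\to 1$ only gives convergence of the telescoping series $\sum_n(E_n-I_n)$; both $E_n$ and $I_n$ could individually behave like $1/n$. Appealing to Proposition~\ref{1.6b} is backwards: that proposition asserts that summability of the escape masses \emph{implies} finiteness of the measure extension, whereas you need the converse, which is established nowhere in the paper (and Section~\ref{sect 5} is the converse theorem, not a toolbox for this direction). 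The paper closes this gap by a different mechanism: starting from Corollary~\ref{estimate2}(2), which gives $\sum_{w\notin V_{n,j}}g_{vw}^{(n+m,n)}\le\varepsilon$ uniformly in $m$ for $v\in V_{n+m+2,j}$, it expands this product over intermediate levels and uses Claim~1 --- once a path has left block $j$ at some level, the probability of being outside $V_{n,j}$ at level $n$ stays bounded below by a constant $C>0$ --- to bound the $\varepsilon$ from below by $C$ times the sum $\sum_{s=n}^{n+m+1}\min_{v}\sum_{u\notin V_{s,j}}f^{(s)}_{vu}$ (up to a factor $1/2$). Since this holds uniformly in $m$, the series of minimal escape masses converges, and only then does the regularly-vanishing-blocks hypothesis upgrade the minimum to the maximum in (c). Without an argument of this kind (or an independent proof that $\sum_n I_n<\infty$), your proof of (c) is incomplete; you flag this yourself as ``the main obstacle,'' but it is not an obstacle that the cited results resolve for you.
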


\begin{proof} First, assume that $l = 1$, i.e., the diagram $B$ is uniquely
 ergodic. In this case we can set $V_{n,1} = V_n$ for every $n \geq 1$.
  Then  conditions (a) - (c) are obviously true. Condition (d) coincides
 with~(\ref{2.3b}) (see Proposition~\ref{uniq_erg}), and we have $V_{n,0} =
 \emptyset$ for  every $n$.

\textit{Proof of $(a)$ and $(b)$}.
Consider now the case when  $l > 1$. We construct partitions $\{V_{n,1},
\ldots,V_{n,l},V_{n,0}\}$ as in Theorem~\ref{prop subdiagrams}. It
follows that conditions $(a)$ and $(b)$ are  satisfied because they are
proved in Theorem \ref{prop subdiagrams}.

\textit{Proof of $(c)$}. Fix any $\varepsilon > 0$.
Then, by property~$(2)$ of Corollary~\ref{estimate2}, we obtain that
 for every sufficiently large $n$, any $m = 0,1,\ldots$, any $j \in
 \{1,\ldots, l\}$,  and any $v \in V_{n+m+2,j}$, the following estimate
 holds:
\begin{eqnarray}\label{4}\nonumber
\varepsilon  & \geq & \sum_{w \in V_n \setminus
V_{n,j}} g_{v w}^{(n+m,n)}\\ \nonumber
& =&  \sum_{u \in V_{n+m+1}}f_{vu}^{(n+m+1)} \sum_{w \notin V_{n,j}}
 g_{uw}^{(n+m,n)} \\
&=  &\sum_{u \notin V_{n+m+1,j}}f_{vu}^{(n+m+1)} \sum_{w \notin
 V_{n,j}}g_{uw}^{(n+m,n)}\\
&   \ \ \ \ & + \sum_{u \in V_{n+m+1,j}}f_{vu}^{(n+m+1)} \sum_{w
\notin V_{n,j}}g_{uw}^{(n+m,n)}. \nonumber
\end{eqnarray}

\medskip
\textit{\textbf{Claim 1}.} For sufficiently large $n$  and for every $m =
0,1, \ldots$, there exists a constant $C > 0$ such that
\begin{equation}\label{5}
\sum_{w \notin V_{n,j}}g_{uw}^{(n+m,n)} \geq C
\end{equation}
whenever $u \notin V_{n+m+1,j}$.

\textit{Proof of Claim 1.} If $u \in V_{n+m+1,i}$ for some $i = 1,\ldots,l$
and $i \neq j$, then Claim 1 is satisfied in virtue of  Property (1) of
 Corollary~\ref{estimate2}.

Suppose that $u \in V_{n+m+1,0}$. Assume for contrary that
$\sum_{w \notin V_{n,j}}g_{uw}^{(n+m,n)}\to 0 $ for an infinite
subsequence $(n_k)$.
We telescope the diagram with respect to these levels.
 Then
 $$\sum_{w \in V_{n,j}}g_{uw}^{(n+m,n)} \to 1
 $$ as $n \rightarrow   \infty$,  and by
 Theorem~\ref{estimate1}, we conclude that
 $d(\ov y^{(n+m+1)}(u), \ov y_j)\rightarrow
 0$  as  $n\to \infty$. This contradiction proves   Claim 1.
 \medskip

We obtain from (\ref{4}),~(\ref{5})
$$
\varepsilon \geq C \sum_{u \notin V_{n+m+1,j}}f_{vu}^{(n+m+1)} +
\sum_{u \in V_{n+m+1,j}}f_{vu}^{(n+m+1)} \sum_{w \notin V_{n,j}}
g_{uw}^{(n+m,n)}.
$$
For $u \in V_{n+m+1,j}$, we have
$$
\begin{aligned}
\sum_{w \notin V_{n,j}} g_{uw}^{(n+m,n)} &= \sum_{v' \in V_{n+m}}
f_{uv'}^{(n+m)} \sum_{w \notin V_{n,j}}g_{v'w}^{(n+m-1,n)} \\
& =  \sum_{v' \notin V_{n+m,j}}f_{uv'}^{(n+m)} \sum_{w \notin V_{n,j}}
 g_{v'w}^{(n+m-1,n)}\\
 & \ \ \ \  + \sum_{v' \in V_{n+m,j}}f_{uv'}^{(n+m)} \sum_{w
 \notin V_{n,j}}g_{v'w}^{(n+m-1,n)}.
\end{aligned}
$$
Using the same arguments as above, we get
$$
\begin{aligned}
\varepsilon  & \geq C \sum_{u \notin V_{n+m+1,j}}f_{vu}^{(n+m+1)} + C
\sum_{u \in V_{n+m+1,j}}f_{vu}^{(n+m+1)} \sum_{v' \notin V_{n+m,j}}
f_{uv'}^{(n+m)} \\
& \ \ \ \ + \sum_{u \in V_{n+m+1,j}}f_{vu}^{(n+m+1)}\sum_{v' \in
V_{n+m,j}} f_{uv'}^{(n+m)} \sum_{w \notin V_{n,j}}g_{v'w}^{(n+m-1,n)}.
\end{aligned}
$$
A similar computation allows us to deduce
$$
\begin{aligned}
\varepsilon  & \geq C \; \left[\sum_{u \notin V_{n+m+1,j}} f_{vu}^{(n+m+1)} +
\sum_{u_1 \in V_{n+m+1,j}, u_2 \notin V_{n+m,j}} f^{(n+m+1)}_{vu_1}
f^{(n+m)}_{u_1 u_2} + \ldots +\right.\\
& \ \ \ \ \ \ \ \ + \left. \sum_{(u_1,\ldots, u_m) \in V_{n+m+1,j}\times \ldots \times V_{n
+1,j}, u_{m+1} \notin V_{n,j}}f^{(n+m+1)}_{vu_1} f^{(n+m)}_{u_1 u_2}
\cdots f^{(n)}_{u_m u_{m+1}}\right] \\
& = C \cdot M_v^{(n+m+1,n)},
\end{aligned}
$$
where by $M_v^{(n+m+1,n)}$ we denote the expression in quadratic
brackets from the inequality above.
Thus, we proved that for sufficiently large $n$, every $m = 0, 1, \ldots$,
and every $v \in V_{n+m+2,j}$,
\begin{equation}\label{M-estimate}
M_v^{(n+m+1, n)} \leq \frac{\varepsilon}{C}.
\end{equation}
Denote
$$
s_v^{(n+m+1)} = \sum_{u \notin V_{n+m+1,j}} f_{vu}^{(n+m+1)}.
$$
Then, by definition of $M_v^{(n+m+1, n)}$, we have
\begin{eqnarray*}
M_v^{(n+m+1, n)} &=& s_v^{(n+m+1)} + \sum_{u \in V_{n+m+1,j}}
f_{vu}^{(n+m+1)} M_u^{(n+m, n)}\\
&=& s_v^{(n+m+1)} + (1 - s_v^{(n+m+1)}) \sum_{u \in V_{n+m+1,j}}
\frac{f_{vu}^{(n+m+1)}}{1 - s_v^{(n+m+1)}} M_u^{(n+m, n)}.
\end{eqnarray*}
If we denote
$$
r_{vu}^{(n+m+1)} = \frac{f_{vu}^{(n+m+1)}}{1 - s_v^{(n+m+1)}},
$$
then we see that
\begin{equation}\label{r_n}
\sum_{u \in V_{n+m+1,j}}r_{vu}^{(n+m+1)} = 1.
\end{equation}
Recall that $v$ is a vertex from $V_{n+m+2,j}$.
Using~(\ref{M-estimate}), we get
$$
\begin{aligned}
M_v^{(n+m+1, n)} & = s_v^{(n+m+1)} \left(1 -  \sum_{u \in V_{n+m+1,j}}
r_{vu}^{(n+m+1)}M_u^{(n+m, n)}\right) \\
& \ \ \  \ \ + \sum_{u \in V_{n+m+1,j}}
r_{vu}^{(n+m+1)}M_u^{(n+m, n)}\\
& \geq \frac{1}{2}s_v^{(n+m+1)} + \sum_{u \in
V_{n+m+1,j}}r_{vu}^{(n+m+1)}M_u^{(n+m, n)}\\
& \geq \frac{1}{2}\min_{v\in V_{n+m+2,j}} \sum_{u \notin V_{n+m+1,j}}
f_{vu}^{(n+m+1)}  + \sum_{u \in V_{n+m+1,j}}r_{vu}^{(n+m+1)}
M_u^{(n+m, n)}.
\end{aligned}
$$
Applying the same reasoning to $M_u^{(n+m, n)}$, $M_u^{(n+m-1, n)},
\ldots, M_u^{(n, n)}$ and using~(\ref{r_n}), we obtain

\begin{equation}\label{6}
\frac{\varepsilon}{C} \geq \frac{1}{2} \left(\min_{v\in V_{n+m+2,j}}
\sum_{u \notin V_{n+m+1,j}} f_{vu}^{(n+m+1)} + \ldots + \min_{v\in V_{n+1,j}} \sum_{u \notin V_{n,j}} f_{vu}^{(n)}
\right).
\end{equation}

Since $B$ is a diagram with regularly vanishing blocks, we can take $U_n =
V_{n,j}$, $U_n^c = V_n \setminus V_{n,j}$ and apply Definition
\ref{def vanishing blocks}.  It follows that
\begin{equation}\label{7}
\min_{v\in V_{n+1,j}} \sum_{u \notin V_{n,j}} f_{vu}^{(n)} \geq C_1
\max_{v\in V_{n+1,j}} \sum_{u \notin V_{n,j}} f_{vu}^{(n)}
\end{equation}
for sufficiently large $n$.

Applying (\ref{6}) and (\ref{7}), we obtain that for every $\varepsilon >
0$ there is some $N$ such that for  $n>N$, every $m$, and every $j = 1,
\ldots,l$

\begin{equation*}
\max_{v\in V_{n+m+2,j}} \sum_{u \notin V_{n+m+1,j}}
f_{vu}^{(n+m+1)} + \ldots + \max_{v\in V_{n+1,j}} \sum_{u \notin V_{n,j}} f_{vu}^{(n)} \leq \frac{2
\varepsilon}{C\cdot C_1}.
\end{equation*}

Hence, we finally obtain
$$
\sum_{n=1}^\infty \left(1 - \min_{v \in V_{n+1,j}} \sum_{u \in V_{n,j}}
f_{vu}^{(n)}\right) = \sum_{n=1}^\infty \max_{v \in V_{n+1,j}}\sum_{u
\notin V_{n,j}} f_{vu}^{(n)} < \infty.
$$
Thus, condition $(c)$ is proved.
\medskip

\textit{Proof of  $(d)$}. The proof is based on an application of the property
$$
\max_{u,u' \in V_{n,j}} || \ov y^{(n)}(u) - \ov y^{(n)}(u')|| \rightarrow 0
 \mbox { as } n \rightarrow \infty
$$
where $j = 1,\ldots,l$.

Recall that $d$ is the metric on $\mathbb R^k$  generated by the Euclidean
 norm $||\cdot||$, and the metric $ d^*(\ov x,\ov y) = \sum_{i=1}^k |x_i -
 y_i|$  ($\ov x, \ov y \in \mathbb R^k$)
is equivalent to $d$.  Then, for the
 standard  basis  $\{\ov e_i\}_{i = 1}^{k}$, we have
$$
d^*\left(\ov y^{(n)}(u),\ov y^{(n)}(u')\right) = \sum_{w \in V_1}\left|
g_{uw}^{(n,1)} - g_{u'w}^{(n,1)}\right|.
$$
Let $\{\varepsilon_n\}_{n=1}^\infty$ be a decreasing sequence of
positive numbers converging to zero.

\medskip
\textit{\textbf{Claim 2.}} There exist two sequences of natural numbers,
$\{n_i\}_{i = 0}^\infty$ and $\{m_i\}_{i = 1}^\infty$, such that
$$
1 = n_0 < n_1 < n_2 = n_1 + m_1 < n_3 = n_2 + m_2 < \ldots
$$
and
$$
\sum_{w \in V_{n_i}}\left|g_{uw}^{(n_i+m_i,n_i)} - g_{u'w}^{(n_i+m_i,n_i)}\right| <
 \varepsilon_i
$$
for $u,u' \in V_{n_i+m_i,j}$.
\medskip

\textit{Proof of Claim 2.}
We first choose $n_1>1$ such that
$$
\sum_{w\in V_{1}}\left|g_{uw}^{(n,1)} - g_{u'w}^{(n,1)}\right| < \varepsilon_1
$$
for all $n \geq n_1$ and all $u, u' \in V_{n,j}$. This is possible since both
$\ov y^{(n)}(u)$ and $\ov y^{(n)}(u')$ tend to $\ov y_j$ as $n \rightarrow
 \infty$  and $u,u' \in V_{n,j}$. We also have the relation
\begin{equation}\label{7a}
\ov y^{(n+n_1)}(u) = \sum_{w \in V_{n_1}} g_{uw}^{(n+n_1,n_1)} \ov
 y^{(n_1)}(w).
\end{equation}
There exists $n_2 > n_1$ such that for all $n \geq n_2$ and all $u,u' \in
 V_{n,j}$ the following inequality holds:
$$
d^*(\ov y^{(n+n_1)}(u),\ov y^{(n+n_1)}(u')) < \varepsilon_2.
$$
Since the metrics $d^*(E_1)$ and $d^*(E_2)$,  computed  with respect to
two different bases  $E_1$ and $E_2$ in
$\mathbb{R}^{k}$, are equivalent, we can work with the basis $\{\ov
 y_w^{(n_1)}\}_{w \in V_{n_1}}$. Hence, for $n_2$ sufficiently large, we
 get
$$
\sum_{w \in V_{n_1}}\left|g_{uw}^{(n_1 + m_1,n_1)} - g_{u'w}^{(n_1 +
m_1,n_1)}\right| < \varepsilon_2
$$
for $u,u' \in V_{n_2,j}$ and $m_1 = n_2 - n_1$.
Next, we find $n_3 > n_2$ such that for all $n \geq n_3$ and all $u,u' \in
V_{n_3},j$
$$
\sum_{w \in V_{n_2}}\left|g_{uw}^{(n_2 + m_2,n_2)} - g_{u'w}^{(n_2 +
m_2,n_2)}\right| < \varepsilon_3,
$$
where $m_2 = n_3 - n_2$. We continue the procedure and construct the
sequences
$\{n_i\}_{i = 0}^{\infty}$ and  $\{m_i\}_{i = 1}^{\infty}$
which satisfy the statement of Claim 2.

\medskip
To finish the proof of (d), we  telescope $B$ with respect to levels
$\{n_i\}_{i = 0}^{\infty}$, and we are done.
\medskip

\textit{Proof of  $(e1)$ and  $(e2)$}. We observe that formulas $(e1)$ and $(e2)$
are  consequences of
 condition (3) of Corollary~\ref{estimate2}. We note that condition $(e2)$
has  been already proved in Claim 1 above.
Here we prove $(e1)$.

By Lemma~\ref{remar1},  we have, for any $w \in V_{n,0}$,
$$
d^*(\ov y^{(n)}(w), \Delta_\infty^{(1)}) \rightarrow 0 \mbox{ as }
n \rightarrow \infty.
$$
Denote by $D$ the distance of $\ov y^{(n)}(w)$ to the $(l-1)$-dimensional
subspace  containing the simplex $\Delta_\infty^{(1)}$. Then
\begin{eqnarray*}
\mathrm{vol}_l S(\ov y_1,\ldots, \ov y_l, \ov y^{(n)}(w)) &=& \frac{1}{l} \, D \, \mathrm{vol}_{l-1} S(\ov y_1,\ldots, \ov y_l)\\
 &\leq& \frac{1}{l} \, d(\ov y^{(n)}(w), \Delta_\infty^{(1)}) \, \mathrm{vol}_{l-1} S(\ov y_1,\ldots, \ov y_l)\\
  &\rightarrow& 0
\end{eqnarray*}
as $n \rightarrow \infty$.
The above property and the conditions $\ov a_j^{(n)} \rightarrow \ov y_j$ as $n \rightarrow \infty$ imply $(e1)$.
 \end{proof}

\section{Bratteli diagrams of arbitrary rank and ergodic invariant
 measures} \label{sect 5}
The main result of this section is Theorem \ref{main_gen_case}. This
theorem
can be viewed as a converse statement to Theorem~\ref{main1}. We note
that Theorem \ref{main_gen_case} holds for for a wider set of Bratteli
 diagrams than Theorem \ref{main1}.

\subsection{A sufficient condition of unique ergodicity}

In this subsection, we focus on condition (\ref{2.3b}) of unique ergodicity of a Bratteli diagram and condition $(d)$ of Theorem~\ref{main1}, which corresponds to the unique ergodicity of a subdiagram. Our aim is to formulate more convenient sufficient conditions of unique ergodicity of a Bratteli diagram without using the telescoping of the matrices $F_n$. 
We assume that a Bratteli diagram $B$ satisfies conditions $(a) - (c)$ of Theorem~\ref{main1}, we do not assume that $B$ has the property of regularly vanishing blocks.  
Condition $(d)$ states that after telescoping of a Bratteli diagram $B$, which
satisfies  Theorem  \ref{main1}, one has
$$
\max_{v,v' \in V_{n+1,j}} \sum_{w \in V_{n}} \left|f_{vw}^{(n)} -
f_{v'w}^{(n)}\right| \rightarrow 0
$$
as $n \rightarrow \infty$ for $j = 1,\ldots,l$.

As was mentioned above, this  condition is a consequence of the fact that
\begin{equation}\label{alpha}
d^*(\ov y^{(n)}(u), \ov y^{(n)}(u')) \rightarrow 0
\end{equation}
as $n\rightarrow \infty$ whenever $u,u' \in V_{n,j}$ for some $j =
1,\ldots, l$.

It can be seen that relation (\ref{alpha}) is equivalent to the following:
\begin{equation}\label{beta}
\max_{u, u' \in V_{n,j}} \sum_{w \in V_1} \left|g_{uw}^{(n,1)} - g_{u'w}^{(n,
1)}\right| \rightarrow 0
\end{equation}
as $n\rightarrow \infty$.

Now we can  formulate and prove an assertion in terms of matrices $F_n$
 (without using telescoping) which implies~(\ref{beta}). We keep the
 same notation  that was used in the previous section.

 Let
$$
m_{n,j} = \min_{v\in V_{n+1,j}, w \in V_{n,j}} f_{vw}^{(n)}, \qquad M_{n,j}
= \max_{v\in V_{n+1,j}, w \in V_{n,j}} f_{vw}^{(n)},
$$
and
$$
m_w^{(n,j)} = \min_{v \in V_{n+1,j}}g_{vw}^{(n,1,j)}, \qquad M_w^{(n,j)} =
\max_{v \in V_{n+1,j}}g_{vw}^{(n,1,j)}, \qquad w \in V_{1,j},
$$
where $G_{(n,1,j)} = (g_{vw}^{(n,1,j)})_{v \in V_{n+1,j}, w \in V_{1,j}}$ is a
matrix obtained by multiplication of only those blocks of incidence matrices
$F_n, \ldots, F_1$ which correspond to the vertices from $\{V_{m,j}\}$ for
$m = 1, \ldots, n$. In other words,  we have
\begin{equation}\label{formula for h}
g_{vw}^{(n+1,1,j)} = \sum_{u \in V_{n+1,j}}f_{vu}^{(n+1)}g_{uw}^{(n,
1,j)}
\end{equation}
where $v \in V_{n+1,j}, w \in  V_{1,j}$.

\begin{thm}\label{1.6a}
Let $B$ be a Bratteli diagram of rank $k$ which satisfies conditions $(a)-(c)$
from Theorem~\ref{main1}. If 
$$
\sum_{n=1}^\infty m_{n,j} = \infty,
$$ 
then condition~(\ref{beta}) holds, and subdiagram $B_j$ corresponding to the vertices $\{V_{n,j}\}_{n = 1}^{\infty}$ is uniquely ergodic.
\end{thm}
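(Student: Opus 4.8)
The plan is to obtain both (\ref{beta}) and the unique ergodicity of $B_j$ from the classical weak-ergodicity mechanism for backward products of stochastic matrices: a divergent sum of minimal in-block entries forces the Dobrushin contraction coefficient of the products to vanish, while condition $(c)$ ensures that the paths leaving $B_j$ contribute only a summable perturbation. I set $\eta_m := \max_{v \in V_{m+1,j}}\sum_{w \notin V_{m,j}} f_{vw}^{(m)}$, so that $\sum_m \eta_m < \infty$ by $(c)$, and $k_j = |V_{m,j}|$.

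First I would set up the probabilistic picture for (\ref{beta}). For $u \in V_{n+1,j}$ the $u$-th row of $G_{(n,1)} = F_n \cdots F_1$ is the distribution at level $1$ of the backward walk started at $u$, which at level $m+1$ jumps to level $m$ according to the row $(f_{vw}^{(m)})_w$ of $F_m$; hence $\sum_{w \in V_1}|g_{uw}^{(n,1)} - g_{u'w}^{(n,1)}| = 2\,\mathrm{TV}(\text{row }u,\text{row }u')$. I would bound this total variation by coupling the two walks started at $u,u' \in V_{n+1,j}$. While both sit in the block (at $a,a' \in V_{m+1,j}$), the overlap of their transition laws inside the block is $\sum_{w \in V_{m,j}}\min(f_{aw}^{(m)},f_{a'w}^{(m)}) \geq k_j\,m_{m,j}$, so they can be made to coalesce at the next level, landing in $V_{m,j}$, with probability at least $k_j m_{m,j}$; once coalesced they travel together forever. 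The probability that a single step leaks out of the block is at most $\eta_m$.

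The main obstacle is that $\sum_m \eta_m$ is only finite, not small, so treating every leak as a coupling failure gives a useless constant bound. I would circumvent this with a two-phase (tail) argument: given $\varepsilon>0$, choose $N$ with $2\sum_{m\geq N}\eta_m<\varepsilon$ and run the coupling only over the levels $[N,n]$. There the probability of a leak before coalescence is below $\varepsilon$, while the probability of surviving without coalescing is at most $\prod_{m=N}^{n}(1-k_j m_{m,j}) \leq \exp\!\big(-k_j\sum_{m=N}^{n} m_{m,j}\big)$, which tends to $0$ as $n\to\infty$ because the tail of the divergent series $\sum m_{m,j}$ still diverges. Since coalescing at any level $\geq N$ forces agreement at level $1$, this yields $\limsup_n \max_{u,u'\in V_{n+1,j}}\sum_{w}|g_{uw}^{(n,1)}-g_{u'w}^{(n,1)}| \leq 2\varepsilon$ for every $\varepsilon$, i.e. (\ref{beta}).

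For the unique ergodicity of $B_j$ I would pass to the genuine stochastic incidence matrices $\wh F_n$ of the subdiagram, which are honestly row-stochastic, so that no leakage correction is needed once their minimal entries are controlled. Writing $\rho_w^{(n)} = \wh h_w^{(n)}/h_w^{(n)}$ for the fraction of paths to $w$ remaining in $B_j$, the recursion $\rho_v^{(n+1)} = \sum_{w\in V_{n,j}} f_{vw}^{(n)}\rho_w^{(n)}$ together with $\rho_w^{(1)}=1$ and $\sum_m\eta_m<\infty$ gives the uniform lower bound $\rho_w^{(n)} \geq \rho_* := \prod_{m\geq 1}(1-\eta_m)>0$. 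Comparing $\wh f_{vw}^{(n)} = f_{vw}^{(n)}\,\rho_w^{(n)}/\rho_v^{(n+1)}$ then yields $\wh m_{n,j} \geq \rho_*\, m_{n,j}$, so $\sum_n \wh m_{n,j}=\infty$. Consequently the contraction coefficient of $\wh F_n\cdots\wh F_1$ is bounded by $\exp(-k_j\rho_*\sum_n m_{n,j})\to 0$, whence $\mathrm{diam}(\Delta_m^{(1,j)})\to 0$ and the limit simplex $\Delta_\infty^{(1,j)}$ of $B_j$ is a singleton; by Theorem~\ref{Theorem_measures_general_case} this means $B_j$ carries a unique ergodic measure, exactly as in the ``if'' part of Theorem~\ref{uniq_erg}.
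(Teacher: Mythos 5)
Your proposal is correct, and it reaches both conclusions by a route that is recognizably the same contraction mechanism as the paper's but packaged differently. The paper works algebraically: it forms the block-only products $G_{(n,1,j)}$, sandwiches $g_{vw}^{(n+1,1,j)}$ between $m_w^{(n,j)}$ and $M_w^{(n,j)}$ plus explicit leakage terms $(1-S_v^{(n+1,j)})(\cdots)$, and derives the recursion $M_w^{(n+1,j)}-m_w^{(n+1,j)}\leq (M_w^{(n,j)}-m_w^{(n,j)})(1-2m_{n+1,j})$, so that divergence of $\sum_n m_{n,j}$ kills the oscillation; condition $(c)$ is what makes the block products a faithful proxy for the full ones. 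Your coupling argument is the probabilistic incarnation of exactly this Dobrushin-type estimate, with the tail splitting at level $N$ playing the role of the paper's explicit leakage bookkeeping; the factor $k_j m_{m,j}$ versus the paper's $2m_{n+1,j}$ is an immaterial difference in the contraction constant. Where you genuinely add something is the second half: the paper disposes of the unique ergodicity of $B_j$ in one sentence by invoking Theorem~\ref{uniq_erg}, even though that theorem must be applied to the subdiagram's \emph{own} stochastic incidence matrices $\wh F_n$, which are not the restrictions of $F_n$ to the block. Your ratio argument with $\rho_w^{(n)}=\wh h_w^{(n)}/h_w^{(n)}\geq \prod_m(1-\eta_m)>0$, giving $\wh m_{n,j}\geq \rho_* m_{n,j}$ and hence $\sum_n \wh m_{n,j}=\infty$, supplies precisely the transfer step the paper leaves implicit, and it is correct (the recursion $\rho_v^{(n+1)}=\sum_{w\in V_{n,j}}f_{vw}^{(n)}\rho_w^{(n)}$ follows directly from $\wh h_v^{(n+1)}=\sum_{w\in V_{n,j}}\tl f_{vw}^{(n)}\wh h_w^{(n)}$). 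So: same key idea for (\ref{beta}), a cleaner and more complete justification of the unique ergodicity claim.
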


\begin{proof}
First, we remark that if $|V_{n,j}| = 1$, then~(\ref{beta}) is true. So, we
assume, without loss of generality,  that $|V_{n,j}| \geq 2$. Further, if
$M_w^{(n,j)} = m_w^{(n,j)}$ for infinitely many $n$ and some $w \in V_n$
then, by (c), condition~(\ref{beta}) automatically holds. Thus, we assume
that $m_w^{(n,j)} < M_w^{(n,j)}$ for every $n \geq 1$ and every $w \in
V_n$. It follows that if $m_w^{(n,j)} = g_{vw}^{(n,1,j)}$ and $M_w^{(n,j)} =
g_{v'w}^{(n,1,j)}$ for some $v, v' \in V_{n+1,j}$, then $v \neq v'$.
For every $v \in V_{n+1,j}$, denote
$$
S_v^{(n,j)} = \sum_{u \in V_{n,j}} f_{vu}^{(n)}.
$$
It follows from condition (c) of Theorem~\ref{main1} that 
$S_v^{(n,j)} > 0$ 
where   $v \in V_{n+1,j}$ and $n$ is  large enough. In the following part of 
the proof we will implicitly assume  that $n$ is already chosen 
sufficiently  large.
Using (\ref{formula for h}), we obtain that,  for every $v \in V_{n+2,j}$ and
 $w \in V_{1,j}$,
 $$
 \begin{aligned}
 M_w^{(n,j)} - g_{vw}^{(n+1,1,j)} & = \sum_{u \in V_{n+1,j}}
\frac{f_{vu}^{(n+1)}}{S_{v}^{(n+1,j)}}M_w^{(n,j)} - \sum_{u \in
V_{n+1,j}}  f_{vu}^{(n+1)} g_{uw}^{(n,1,j)}\\
& = \sum_{u \in V_{n+1,j}}\frac{f_{vu}^{(n+1)}}{S_{v}^{(n+1,j)}}M_w^{(n,j)}
- {S_{v}^{(n+1,j)}} \sum_{u \in V_{n+1,j}} \frac{f_{vu}^{(n+1)}}{S_{v}^{(n
+1,j)}} g_{uw}^{(n,1,j)}\\
 & = \sum_{u \in V_{n+1,j}}\frac{f_{vu}^{(n+1)}}{S_{v}^{(n+1,j)}}
(M_w^{(n,j)} - g_{uw}^{(n,1,j)}) \\
& \ \ \ \ \ \ + (1 - S_{v}^{(n+1,j)})\sum_{u \in V_{n
+1,j}} \frac{f_{vu}^{(n+1)}}{S_{v}^{(n+1,j)}} g_{uw}^{(n,1,j)}\\
& \geq \frac{m_{n+1,j}}{S_{v}^{(n+1,j)}}(M_w^{(n,j)} - m_w^{(n,j)}) + (1 -
S_{v}^{(n+1,j)})\sum_{u \in V_{n+1,j}} \frac{f_{vu}^{(n+1)}}{S_{v}^{(n
+1,j)}} g_{uw}^{(n,1,j)}\\
& \geq m_{n+1,j}(M_w^{(n,j)} - m_w^{(n,j)}) + (1 - S_{v}^{(n+1,j)})
\sum_{u \in V_{n+1,j}} \frac{f_{vu}^{(n+1)}}{S_{v}^{(n+1,j)}}
g_{uw}^{(n,1,j)}.
 \end{aligned}
 $$

Similarly, we can prove that
$$
g_{vw}^{(n+1,1,j)} - m_w^{(n,j)} \geq m_{n+1,j}(M_w^{(n,j)} - m_w^{(n,j)})
- (1 - S_{v}^{(n+1,j)})\sum_{u \in V_{n+1,j}} \frac{f_{vu}^{(n+1)}}
{S_{v}^{(n+1,j)}} g_{uw}^{(n,1,j)}.
$$
It follows from the above inequalities that
\begin{multline*}
m_w^{(n,j)} + m_{n+1,j} (M_w^{(n,j)} - m_w^{(n,j)}) - (1 - S_{v}^{(n+1,j)})
\sum_{u \in V_{n+1,j}} \frac{f_{vu}^{(n+1)}}{S_{v}^{(n+1,j)}} g_{uw}^{(n,
1,j)} \leq\\
\leq  g_{vw}^{(n+1,1,j)} \leq \\
\leq M_w^{(n,j)} - m_{n+1,j}(M_w^{(n,j)} - m_w^{(n,j)}) - (1 - S_{v}^{(n+1,j)})
\sum_{u \in V_{n+1,j}} \frac{f_{vu}^{(n+1)}}{S_{v}^{(n+1,j)}} g_{uw}^{(n,
1,j)}.
\end{multline*}
Therefore,
\begin{multline*}
m_w^{(n,j)} + m_{n+1,j}(M_w^{(n,j)} - m_w^{(n,j)})  - (1 - S_{v}^{(n+1,j)})
\sum_{u \in V_{n+1,j}} \frac{f_{vu}^{(n+1)}}{S_{v}^{(n+1,j)}} g_{uw}^{(n,
1,j)} \leq\\
\leq m_w^{(n+1,j)} \leq M_w^{(n+1,j)} \leq\\
\leq M_w^{(n,j)} - m_{n+1,j}(M_w^{(n,j)} - m_w^{(n,j)})  - (1 - S_{v}^{(n+1,j)})
\sum_{u \in V_{n+1,j}} \frac{f_{vu}^{(n+1)}}{S_{v}^{(n+1,j)}} g_{uw}^{(n,
1,j)}.
\end{multline*}
Hence we have
\begin{equation}\label{gamma}
M_w^{(n+1,j)} - m_w^{(n+1,j)} \leq (M_w^{(n,j)} - m_w^{(n,j)})(1 - 2 m_{n
+1,j}).
\end{equation}
Applying~(\ref{gamma}) finitely many times, we finally obtain
$$
M_w^{(n+1,j)} - m_w^{(n+1,j)} \leq (M_w^{(1,j)} - m_w^{(1,j)})
\prod_{s = 2}^{n+1} (1 - 2 m_{s,j}).
$$
The condition $\sum_{n=1}^\infty m_{n,j} = \infty$ implies that
$\prod_{s = 1}^{\infty} (1 - m_{s,j}) = 0$. This means that
$$
\lim_{n \rightarrow \infty}(M_w^{(n,j)} - m_w^{(n,j)}) = 0
$$
for each $w \in V_{1,j}$. The last condition is equivalent to (\ref{beta}). The unique ergodicity of $B_j$ follows from Theorem~\ref{uniq_erg} and the proposition is proved.

\end{proof}

The following criterion of unique ergodicity is an immediate  corollary of
Theorem \ref{1.6a}. It is important to observe that it is true for an
arbitrary Bratteli diagram.

\begin{corol}
Let $B$ be a Bratteli diagram with stochastic incidence matrices $F_n$ and
 $$
 m_n = \min_{v \in V_{n+1}, w \in V_n} f_{vw}^{(n)}.
 $$
 If  
 $$
 \sum_{n = 1}^{\infty} m_n = \infty,
 $$ 
 then $B$ is uniquely ergodic.
\end{corol}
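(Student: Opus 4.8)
The plan is to read this off the proof of Theorem~\ref{1.6a}, applied to the trivial partition in which the entire diagram is one subdiagram: take $l = 1$, $V_{n,1} = V_n$ and $V_{n,0} = \emptyset$ for every $n$. With this choice conditions $(a)$--$(c)$ of Theorem~\ref{main1} hold for free, so that Theorem~\ref{1.6a} is applicable in spirit: $V_{n,1} = V_n \neq \emptyset$ gives $(a)$, and since $F_n$ is row-stochastic one has $S_v^{(n,1)} = \sum_{w \in V_n} f_{vw}^{(n)} = 1$ for every $v \in V_{n+1}$, whence the series in $(c)$ is identically zero. The proof of Theorem~\ref{1.6a} never uses the finite-rank hypothesis beyond the finiteness of the levels $V_n$, which holds for every Bratteli diagram, so its estimate specializes to the present situation and in fact simplifies.

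First I would fix the notation $g_{vw}^{(n,1)}$ for the entries of $G_{(n,1)} = F_n \cdots F_1$ (with $v \in V_{n+1}$, $w \in V_1$) and set $M_w^{(n)} = \max_{v \in V_{n+1}} g_{vw}^{(n,1)}$ and $m_w^{(n)} = \min_{v \in V_{n+1}} g_{vw}^{(n,1)}$. The key simplification over Theorem~\ref{1.6a} is that $S_v^{(n+1)} = 1$, so every term carrying the factor $(1 - S_v^{(n+1,j)})$ now vanishes. Substituting the recursion~(\ref{formula for h}), which here reads $g_{vw}^{(n+1,1)} = \sum_{u \in V_{n+1}} f_{vu}^{(n+1)} g_{uw}^{(n,1)}$, into the two one-sided bounds of that proof and subtracting yields the Dobrushin-type contraction
$$
M_w^{(n+1)} - m_w^{(n+1)} \leq \left(M_w^{(n)} - m_w^{(n)}\right)\left(1 - 2 m_{n+1}\right)
$$
for every $w \in V_1$. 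Iterating gives $M_w^{(n)} - m_w^{(n)} \leq (M_w^{(1)} - m_w^{(1)}) \prod_{s=2}^{n}(1 - 2 m_s)$, and since $\sum_n m_n = \infty$ forces $\prod_s (1 - 2 m_s) = 0$, I conclude that $M_w^{(n)} - m_w^{(n)} \to 0$ for each of the finitely many $w \in V_1$.

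Because $|V_1|$ is finite, this gives $\max_{v, v' \in V_{n+1}} \sum_{w \in V_1} |g_{vw}^{(n,1)} - g_{v'w}^{(n,1)}| \to 0$, which is precisely $\mathrm{diam}(\Delta_n^{(1)}) \to 0$, i.e. condition~(\ref{beta}) for the whole diagram. Running the same argument with the products $G_{(n+m,n)}$ starting from an arbitrary level $n_0$ (using that $\sum_{s \geq n_0} m_s = \infty$) gives $\mathrm{diam}(\Delta_m^{(n_0)}) \to 0$ as $m \to \infty$ for every $n_0$, so by the ``if'' direction of Theorem~\ref{uniq_erg} each $\Delta_\infty^{(n_0)}$ is a singleton and $B$ is uniquely ergodic.

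The only point needing care -- and where I expect a sentence rather than genuine effort -- is that $|V_n| = 1$ at some levels makes $m_n = 1$ and the factor $1 - 2m_n$ negative. This is harmless: if $|V_{n+1}| = 1$ then $G_{(n,1)}$ has a single row, so $M_w^{(n)} - m_w^{(n)} = 0$ already, and the contraction (whose right-hand side is then $0$) keeps the difference at $0$ thereafter. Thus if infinitely many levels are singletons the conclusion is immediate, and otherwise one discards the finitely many singleton levels at the start -- which does not affect the divergence of $\sum_n m_n$ -- and runs the product over the levels with $|V_n| \geq 2$, on which $m_n \leq 1/2$ guarantees $1 - 2m_n \in [0,1)$.
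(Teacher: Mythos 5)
Your proposal is correct and follows essentially the same route as the paper: the corollary is obtained by specializing the proof of Theorem~\ref{1.6a} to the trivial partition $V_{n,1}=V_n$, where $S_v^{(n,1)}\equiv 1$ kills the correction terms and leaves the Dobrushin-type contraction $M_w^{(n+1)}-m_w^{(n+1)}\leq (M_w^{(n)}-m_w^{(n)})(1-2m_{n+1})$, after which divergence of $\sum_n m_n$ forces the diameters of the polytopes $\Delta_m^{(n_0)}$ to vanish and Theorem~\ref{uniq_erg} applies. Your extra remarks (that finite rank is not needed beyond finiteness of each level, and the harmless degenerate case $m_n>1/2$, which can only occur when a level is a singleton and the row differences already vanish) are accurate and consistent with the paper's observation that the corollary holds for arbitrary Bratteli diagrams.
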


\subsection{The main theorem II}\label{main_thm_inverse}
In this subsection, we define a class of Bratteli diagrams that  generalizes,
in some sense,  the class of Bratteli diagrams of finite rank. Then we prove
 the converse of Theorem~\ref{main1}.

We recall that a (vertex) subdiagram $B'$ of a Bratteli diagram $B$ is defined
 by a sequence of proper subsets of vertices $W_n \subset V_n$ and by the
 corresponding sequence of incidence matrices $F'_n =
 (f_{vw}^{(n)})_{v \in W_{n+1}, w \in W_n}$. Denote by $X_{B'}$ the set
of all infinite paths of $B'$. Then $X_{B'} \subset X_B$ is a closed subset of
 $X_B$. Let $\wh X_{B'}$ be the subset of paths in $X_B$ which are
equivalent to  paths from $X_{B'}$. Let $\mu'$ be any tail invariant
 probability measure on $B'$. Then $\mu'$ can be uniquely extended to an
  invariant (finite or infinite) measure
$\wh{\mu'}$ on $\wh X_{B'}$ such that $\wh{\mu'}|_{X_{B'}} =
{\mu'}|_{X_{B'}}$ (for more details see
\cite{AdamskaBezuglyiKarpelKwiatkowski2016}).

We will need the following  result from
 \cite[Theorem 2.2]{AdamskaBezuglyiKarpelKwiatkowski2016}
 which holds for arbitrary Bratteli diagram.

\begin{prop}[\cite{AdamskaBezuglyiKarpelKwiatkowski2016}]\label{1.6b}
Let $B$ be a Bratteli diagram, and let $B'$ be its subdiagram defined by a
 sequence of vertices $W_n$.
If
$$
\sum_{n=1}^\infty \max_{v \in W_{n+1}} \left(\sum_{w \notin W_n} f_{vw}^{(n)}\right) < \infty,
$$
then any tail invariant probability measure $\mu'$ on $X_{B'}$ extends to
a finite invariant measure $\wh {\mu'}$ on $\wh X_{B'}$.
\end{prop}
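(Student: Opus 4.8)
The plan is to compute the total mass $\wh{\mu'}(\wh X_{B'})$ directly and to show it is finite precisely under the summability hypothesis. First I would exhaust $\wh X_{B'}$ by the increasing sequence of clopen sets
$$
\wh X_{B'}^{(N)} = \{x \in X_B : x_i \in \ov E_i \ \text{ for all } i \geq N\},
$$
consisting of those paths that lie inside $B'$ from level $N$ onwards. Since $\wh X_{B'}^{(N)} \subset \wh X_{B'}^{(N+1)}$ and $\bigcup_N \wh X_{B'}^{(N)} = \wh X_{B'}$, continuity of measure gives $\wh{\mu'}(\wh X_{B'}) = \lim_N \wh{\mu'}(\wh X_{B'}^{(N)})$, so it suffices to bound the terms of this increasing sequence uniformly in $N$.

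Next I would evaluate each term using tail invariance. For a vertex $w \in W_N$, the part of $\wh X_{B'}^{(N)}$ passing through $w$ at level $N$ consists of the $h_w^{(N)} = |E(v_0,w)|$ initial segments in $B$ followed by an arbitrary subdiagram continuation from $w$. Replacing an initial segment $e \in E(v_0,w)$ by a subdiagram path $e' \in E'(v_0,w)$ is a tail-equivalence, so the extension $\wh{\mu'}$ assigns to the continuations above $e$ the same mass $\mu'$ assigns to the corresponding subdiagram cylinder over $e'$, namely the single-cylinder value $p_w'^{(N)} = q_w'^{(N)}/h_w'^{(N)}$, where $h_w'^{(N)} = |E'(v_0,w)|$ and $q_w'^{(N)} = \mu'(X_w'^{(N)})$. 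Summing over the $h_w^{(N)}$ initial segments and over $w$, and using $\sum_{w \in W_N} q_w'^{(N)} = \mu'(X_{B'}) = 1$, I obtain
$$
\wh{\mu'}(\wh X_{B'}^{(N)}) = \sum_{w \in W_N} \frac{h_w^{(N)}}{h_w'^{(N)}}\, q_w'^{(N)} \leq \max_{w \in W_N} \frac{h_w^{(N)}}{h_w'^{(N)}}.
$$
Thus finiteness of $\wh{\mu'}$ reduces to a uniform positive lower bound $\inf_N \min_{w \in W_N} \rho_w^{(N)} > 0$ for the ratios $\rho_w^{(N)} := h_w'^{(N)}/h_w^{(N)} \in (0,1]$.

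The core of the argument is a recursion for $\rho_w^{(N)}$ in terms of the stochastic matrices $F_n$. Dividing the subdiagram height identity $h_v'^{(N+1)} = \sum_{w \in W_N} \tl f_{vw}^{(N)} h_w'^{(N)}$ by $h_v^{(N+1)}$ and using the definition $f_{vw}^{(N)} = \tl f_{vw}^{(N)} h_w^{(N)}/h_v^{(N+1)}$ from~(\ref{stoch_inc_matr}) gives, for $v \in W_{N+1}$,
$$
\rho_v^{(N+1)} = \sum_{w \in W_N} f_{vw}^{(N)} \rho_w^{(N)} \geq m_N \sum_{w \in W_N} f_{vw}^{(N)} = m_N\bigl(1 - \epsilon_v^{(N)}\bigr),
$$
where $m_N := \min_{w \in W_N}\rho_w^{(N)}$ and $\epsilon_v^{(N)} := \sum_{w \notin W_N} f_{vw}^{(N)}$. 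With $\eta_N := \max_{v \in W_{N+1}}\epsilon_v^{(N)}$, which is exactly the $n=N$ summand of the hypothesis, this yields $m_{N+1} \geq m_N(1 - \eta_N)$. Since $\sum_N \eta_N < \infty$ forces $\eta_N \to 0$, for all $N \geq N_0$ large enough one has $\eta_N < 1$ and $\prod_{n \geq N_0}(1 - \eta_n)$ converges to a strictly positive limit; iterating the recursion gives $m_N \geq m_{N_0}\prod_{n=N_0}^{N-1}(1 - \eta_n)$ for $N > N_0$, so $\inf_N m_N > 0$. Combining with the displayed bound yields $\wh{\mu'}(\wh X_{B'}) \leq (\inf_N m_N)^{-1} < \infty$.

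I expect the main obstacle to be the careful justification of the mass formula for $\wh{\mu'}(\wh X_{B'}^{(N)})$: one must verify that the extension $\wh{\mu'}$, being the unique tail-invariant measure agreeing with $\mu'$ on $X_{B'}$, really does assign the common value $p_w'^{(N)}$ to every single cylinder over a vertex $w \in W_N$ continued into $B'$, and that the identification of full-diagram continuations with subdiagram continuations is measure preserving. Once this is in place, the recursion and the infinite-product estimate are routine, and the positivity of the product $\prod_n(1-\eta_n)$ is exactly the point at which the hypothesis $\sum_n \max_{v \in W_{n+1}} \sum_{w \notin W_n} f_{vw}^{(n)} < \infty$ is consumed.
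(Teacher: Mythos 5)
The paper does not prove this proposition itself; it imports it from \cite{AdamskaBezuglyiKarpelKwiatkowski2016}, and your argument is a correct, self-contained proof that follows essentially the same route as that reference: exhaust $\wh X_{B'}$ by the sets of paths lying in $B'$ from level $N$ on, use tail invariance to get $\wh{\mu'}(\wh X_{B'}^{(N)})=\sum_{w\in W_N}(h_w^{(N)}/h_w'^{(N)})\,q_w'^{(N)}\le 1/\min_w\rho_w^{(N)}$, and control $\rho_w^{(N)}=h_w'^{(N)}/h_w^{(N)}$ via the recursion $\rho_v^{(N+1)}=\sum_{w\in W_N}f_{vw}^{(N)}\rho_w^{(N)}$ and the convergent product $\prod_n(1-\eta_n)$. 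The one step you flag as delicate --- that the tail-invariant extension assigns the common value $p_w'^{(N)}$ to every cylinder $[e]$, $e\in E(v_0,w)$, continued inside $B'$ --- is exactly how $\wh{\mu'}$ is constructed in the cited paper, so your proof is complete as written.
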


In what follows, we will assume that (after telescoping) a Bratteli diagram
$B$ admits a  partition
$$
V_n = \bigcup_{i = 0}^{l_n}V_{n,i}, \ \ n = 1,2,\ldots,
$$
into disjoint subsets $V_{n,i}$ such that $V_{n,i} \neq \emptyset$, for
$i = 1,\ldots,l_{n}$, and $l_n \geq 1$. Moreover, let
$$
L_{n+1} = \{1,\ldots,l_{n+1}\} = \bigcup_{i = 1}^{l_n}L_{n+1}^{(i)},
$$
where $L_{n+1}^{(i)} \neq \emptyset$ and  $L_{n+1}^{(i)} \cap
 L_{n+1}^{(j)} = \emptyset$ for $i \neq j$, $i,j = 1,\ldots, l_n$. Hence,
  for every $j = 1, \ldots, l_{n+1}$, there exists a unique $i = i(j) \in
  \{1,\ldots, l_n\}$ such that $j \in L_{n+1}^{(i)}$. Denote
$$
V_{n+1}^{(i)} = \bigcup_{j \in L_{n+1}^{(i)}} V_{n+1,j}
$$
for $1 \leq i \leq l_n$.

We can interpret the sets $L_n^{(i)}$, defined above, in terms of
 subdiagrams. For this, select a sequence $\ov i = (i_1,i_2, \ldots)$ such
  that  $i_1 \in L_1$, $i_2 \in L_2^{(i_1)}$, $i_3 \in L_3^{(i_2)}, \ldots$
  and  define a subdiagram $B_{\ov i} = (\ov V, \ov E)$, where
  $$
  \ov V = \bigcup_{n=1}^{\infty}V_{n,i_n} \cup \{v_0\}.
$$

Now we formulate conditions $(c1)$, $(d1)$, $(e1)$ which are analogues
of conditions $(c)$, $(d)$, $(e)$ used in  Theorem~\ref{main1}:

\medskip
$(c1)$
$$
\sum_{n = 1}^{\infty} \left(\max_{i \in L_{n}} \max_{v \in V_{n+1}^{(i)}}
 \sum_{w \notin V_{n,i}} f_{vw}^{(n)}\right) < \infty;
$$

\medskip
$(d1)$
$$
\max_{j \in L_{n+1}}\max_{v,v' \in V_{n+1,j}} \sum_{w \in V_{n}}
\left|f_{vw}^{(n)} - f_{v'w}^{(n)}\right| \rightarrow 0 \mbox{ as } n \rightarrow \infty;
$$

\medskip
$(e1)$ for every $v \in V_{n+1,0}$, \\
$(e1.1)$
$$
\sum_{w \in V_n \setminus V_{n,0}} f_{vw}^{(n)} \rightarrow 1
\ \ \mbox{ as } n \rightarrow \infty;
$$

\noindent $(e1.2)$ there exists $C>0$ such that $F_{vi}^{(n)} \leq 1 - C$
for every $i = 1,\ldots,l$, where
$$
F_{vi}^{(n)} = \sum_{w \in V_{n,i}}f_{vw}^{(n)}.
$$

\medskip

 Let
 $$\mathcal{L} = \{\ov i
  = (i_1, i_2, \ldots) \ : \ i_1 \in L_1, i_{n+1} \in L_{n+1}^{(i_n)}, n = 1,2,
  \ldots\}.
  $$
  We call such a  sequence $\ov i \in   \mathcal{L}$  a \textit{chain}.
  A finite chain
  $\ov i(m,n)$ is a sequence $\{i_{n+m+1}, \ldots, i_n\}$ such that
  $i_{n+s} \in L_{n+s}^{(i_{n+s-1})}$, $s = 1,\ldots, m+1$.
We remark that a Bratteli diagram $B = (V,E)$ of finite rank has the form
 described in this subsection (see Corollary~\ref{inv_main_1} below).

\begin{thm}\label{main_gen_case}
Let $B = (V,E)$ be a Bratteli diagram satisfying the conditions $(c1),(d1),
(e1)$. Then:

(1) for each $\ov i \in \mathcal{L}$, any measure $\mu_{\ov i}$ defined on
$B_{\ov i}$ has a finite extension $\wh{\mu}_{\ov i}$ on $B$,

(2) each subdiagram $B_{\ov i}$, $\ov i \in \mathcal{L}$, is uniquely
ergodic,

(3) after normalization, the measures $\wh{\mu}_{\ov i}$, $\ov i \in
\mathcal{L}$, form the set of all probability ergodic invariant measures on
$B$.
\end{thm}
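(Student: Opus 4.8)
The plan is to establish the three claims in order, using a direct comparison with Proposition~\ref{1.6b} for (1), a reduction of the subdiagram's stochastic matrices to the ambient ones together with the criterion of Theorem~\ref{uniq_erg} for (2), and a combination of an ergodicity-of-extension argument with a decomposition of an arbitrary ergodic measure along the ``tree of chains'' for (3).

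\textbf{Part (1).} For a fixed chain $\ov i = (i_1, i_2, \ldots)\in\mathcal L$ the subdiagram $B_{\ov i}$ is supported on $W_n = V_{n,i_n}$, and since $i_{n+1}\in L_{n+1}^{(i_n)}$ we have $V_{n+1,i_{n+1}}\subset V_{n+1}^{(i_n)}$. Hence for $v\in W_{n+1}$,
\[
\sum_{w\notin W_n} f_{vw}^{(n)} = \sum_{w\notin V_{n,i_n}} f_{vw}^{(n)} \leq \max_{i\in L_n}\ \max_{v\in V_{n+1}^{(i)}}\sum_{w\notin V_{n,i}} f_{vw}^{(n)},
\]
so the escape series for $B_{\ov i}$ is dominated termwise by the convergent series in $(c1)$. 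Proposition~\ref{1.6b} then yields the finite extension $\wh\mu_{\ov i}$, proving (1).

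\textbf{Part (2).} I would relate the stochastic incidence matrices $\wh F_n = (\wh f_{vw}^{(n)})$ of $B_{\ov i}$ to the ambient ones $F_n$. Writing $\rho_w^{(n)}$ for the proportion of paths into $w\in V_{n,i_n}$ that stay inside $B_{\ov i}$, a direct computation from~(\ref{stoch_inc_matr}) gives
\[
\wh f_{vw}^{(n)} = f_{vw}^{(n)}\,\frac{\rho_w^{(n)}}{\rho_v^{(n+1)}},\qquad \rho_v^{(n+1)} = \sum_{w\in V_{n,i_n}} f_{vw}^{(n)}\rho_w^{(n)}.
\]
The summability in $(c1)$ forces $\prod_n\bigl(1 - \max_v\sum_{w\notin V_{n,i_n}} f_{vw}^{(n)}\bigr)>0$, whence $\rho_w^{(n)}\geq c>0$ uniformly (this is precisely the finiteness of the extension). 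Feeding this bound and condition $(d1)$, taken along the block $j=i_{n+1}$, into the decomposition $\wh f_{vw}-\wh f_{v'w} = \rho_w\bigl(\tfrac{f_{vw}-f_{v'w}}{\rho_v} + f_{v'w}(\tfrac1{\rho_v}-\tfrac1{\rho_{v'}})\bigr)$ and using $|\rho_v^{(n+1)}-\rho_{v'}^{(n+1)}|\le\sum_{w}|f_{vw}^{(n)}-f_{v'w}^{(n)}|$, one obtains $\max_{v,v'\in V_{n+1,i_{n+1}}}\sum_{w\in V_{n,i_n}}|\wh f_{vw}^{(n)}-\wh f_{v'w}^{(n)}|\to 0$. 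By Theorem~\ref{uniq_erg} (no further telescoping being needed) $B_{\ov i}$ is uniquely ergodic.

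\textbf{Part (3), ergodicity and distinctness.} Each normalized $\wh\mu_{\ov i}$ is ergodic: the set $\wh X_{B_{\ov i}}$ is $\mathcal E$-saturated, and any invariant $A\subseteq\wh X_{B_{\ov i}}$ equals the saturation of $A\cap X_{B_{\ov i}}$, so unique ergodicity from (2) forces $\mu_{\ov i}(A\cap X_{B_{\ov i}})\in\{0,\mu_{\ov i}(X_{B_{\ov i}})\}$; since the finite extension carries null sets to null sets, this gives $\wh\mu_{\ov i}(A)\in\{0,\wh\mu_{\ov i}(\wh X_{B_{\ov i}})\}$. Distinct chains diverge in the block-tree and never rejoin (the sets $L_{n+1}^{(i)}$ being disjoint in $i$), so $B_{\ov i}$ and $B_{\ov i'}$ have disjoint vertices for large $n$ and the measures differ.

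\textbf{Part (3), completeness --- the main obstacle.} Take an arbitrary ergodic probability measure $\nu$ with tower vector $(\ov q^{(n)})$ as in Theorem~\ref{Theorem_measures_general_case}, and set $Q_{n,j}=\sum_{w\in V_{n,j}}q_w^{(n)}$. Using $\ov q^{(n)}=F_n^T\ov q^{(n+1)}$ together with $(e1.1)$ for $v\in V_{n+1,0}$ and $(c1)$ for $v\in V_{n+1,j}$, $j\geq 1$, I would first show $Q_{n,0}\to 0$, and then that the surviving block masses satisfy the approximate tree conservation $Q_{n,i}=\sum_{j\in L_{n+1}^{(i)}}Q_{n+1,j}+o(1)$. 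Thus $(Q_{n,j})$ determines, in the limit, a probability distribution on the boundary $\mathcal L$ of the block-tree, and the associated averaging yields, after normalization, a representation $\nu=\int_{\mathcal L}\wh\mu_{\ov i}\,d\omega(\ov i)$; here condition $(d1)$ is what guarantees that the conditional measure on each block $V_{n,j}$ has a single limiting direction, so that the integrand is genuinely $\wh\mu_{\ov i}$ rather than a mixture. Since $\nu$ is extreme and the $\wh\mu_{\ov i}$ are distinct ergodic measures, uniqueness of the ergodic (Choquet) decomposition forces $\omega$ to be a point mass, i.e. $\nu=\wh\mu_{\ov i}$ for a single chain. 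The delicate points I expect are the rigorous passage from the finite-level block masses to a genuine measure on $\mathcal L$ and the control of the conditional directions via $(d1)$ and $(e1.2)$; this is where the inverse-limit description of $\mathcal M_1(B)$ and estimates of the type developed in Section~\ref{sect 4} must do the real work.
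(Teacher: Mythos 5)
Parts (1) and (2) of your proposal are sound. Part (1) is exactly the paper's argument: the escape series for $W_n=V_{n,i_n}$ is dominated termwise by the series in $(c1)$, and Proposition~\ref{1.6b} gives the finite extension. For part (2) you take a genuinely different route: the paper proves uniqueness of the probability invariant measure on $\wh X_{B_{\ov i}}$ by showing $d^*(\ov q^{(n)},\Delta_{m,i}^{(n)})\to 0$ for any such measure and then using $(d1)$ to force $\mathrm{diam}(\Delta_{m,i}^{(n)})\to 0$, whereas you pass to the subdiagram's own stochastic matrices $\wh f_{vw}^{(n)}=f_{vw}^{(n)}\rho_w^{(n)}/\rho_v^{(n+1)}$, extract the uniform bound $\rho\ge c>0$ from the summability in $(c1)$, and feed $(d1)$ into Theorem~\ref{uniq_erg} applied directly to $B_{\ov i}$. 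Your computation checks out (including $|\rho_v^{(n+1)}-\rho_{v'}^{(n+1)}|\le d^*(\ov f_v^{(n)},\ov f_{v'}^{(n)})$), and it is arguably more self-contained and makes transparent why no further telescoping is needed; the only tacit point, shared with the paper, is that $\rho_w^{(n)}>0$ at the initial levels, i.e. that $B_{\ov i}$ is a genuine Bratteli diagram.

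The genuine gap is the completeness half of part (3). Your plan reduces everything to the representation $\nu=\int_{\mathcal L}\wh\mu_{\ov i}\,d\omega(\ov i)$, but establishing that representation is essentially equivalent to what must be proved, and the two steps you flag as ``delicate'' are precisely the substance. The block masses $Q_{n,j}$ and the approximate conservation law (whose error at level $n$ is controlled by $Q_{n+1,0}$ plus the $(c1)$ tail, and $\sum_n Q_{n+1,0}$ need not converge) do not by themselves rule out an ergodic $\nu$ whose generic path returns to the $V_{n,0}$-blocks infinitely often or hops between blocks $j_n$ with $j_{n+1}\notin L_{n+1}^{(j_n)}$ infinitely often; nor do they identify the conditional measure along a chain as $\mu_{\ov i}$. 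What is really needed is that $\nu(\wh X_{B_{\ov i}})=1$ for a single chain $\ov i$, after which your part (2) finishes the job. The paper gets this by a different mechanism: it regards $\ov q^{(n)}$ as an extreme point of $\Delta_\infty^{(n)}$, approximates it by extreme points $\ov g_{v_m}^{(n+m,n)}$ of the prelimit polytopes (Lemma~\ref{remar3}); if infinitely many $v_m$ avoid $V_{n+m+1,0}$, Lemma~\ref{prop4.3} (resting on $(c1)$) shows they all determine one chain and $\sum_{w\in V_{n,i_n}}q_w^{(n)}\ge C_n\to 1$; and the remaining case $v_m\in V_{n+m+1,0}$ is eliminated using $(e1)$ to place $\ov g_{v_m}^{(n+m,n)}$ within $\varepsilon_n$ of $\mathsf{Conv}\{\ov g_s^{(n+m,n)}: s\notin V_{n+m+1,0}\}$, so that extremality of $\ov q^{(n)}$ forces it to coincide with a non-$V_0$ extreme point. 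Some version of this extreme-point (or escape-of-mass) analysis must be supplied before your Choquet argument can run; as written, part (3) is a plan rather than a proof.
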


\begin{proof}
(1) To prove finiteness of the measure extension $\wh\mu_i$
from $B_{\ov i}$, we apply Proposition~\ref{1.6b} with $W_n = V_{n,i_n}$
 for every $n$. Then we  have
$$
\sum_{n=1}^{\infty} \max_{v \in V_{n+1, i_{n+1}}} \left( \sum_{w \notin
 V_{n,i_n}}f_{vw}^{(n)}\right) \leq \sum_{n = 1}^{\infty} \left(\max_{i \in
  L_{n}} \max_{v \in V_{n+1}^{(i)}}
 \sum_{w \notin V_{n,i}} f_{vw}^{(n)}\right) < \infty.
$$
Therefore, any invariant probability measure $\mu$ on $X_{B_{\ov i}}$
extends to a finite measure $\wh{\mu}$ on $\wh X_{B_{\ov i}}$.

\medskip
(2) We show that there exists a unique probability invariant measure
 concentrated on the set $\wh X_{B_{\ov i}}$. Let $\mu = \mu_{\ov i}$ be
 a  probability measure on $X_B$ which can be obtained as the extension of
 a  measure defined on the subdiagram $B_{\ov i}$. Let $\{\ov q^{(n)} =
 (q_w^{(n)})_{w \in V_n}\}$ be a sequence of vectors such that $
 \mu(X_w^{(n)}) = q_w^{(n)}$. The condition $\mu(\wh X_{B_{\ov i}}) =
 1$
 implies that
$$
Q_1^{(n)} = \sum_{w \in V_{n,i_n}}q_w^{(n)} \rightarrow 1
$$
as $n \rightarrow \infty$. Fix $n \geq 1$ and apply (\ref{2.3ab}). Denoting
$i = i_{n+m+1}$, we obtain
$$
\begin{aligned}
\ov q^{(n)} & = \sum_{v \in V_{n+m+1,i}} q_v^{(n+m+1)}
\ov g_v^{(n+m,n)} + \sum_{v \notin V_{n+m+1,i}} q_v^{(n+m+1)}\ov
g_v^{(n+m,n)} \\
& = Q_1^{n+m+1} \sum_{v \in V_{n+m+1,i}} \frac{q_v^{(n+m+1)}}
{Q_1^{(n+m+1)}} \ov g_v^{(n+m,n)}  \\
& \ \ \ \ \  + (1 - Q_1^{n+m+1}) \sum_{v \notin V_{n+m+1,i}}
\frac{q_v^{(n+m+1)}}{1 - Q_1^{(n+m+1)}} \ov g_v^{(n+m,n)}\\
& = Q_1^{(n+m+1)} \ov Y_1^{(n+m+1)} + (1 -Q_1^{(n+m+1)})
\ov Y_2^{(n+m+1)}
\end{aligned}
$$
where $\ov Y_1^{(n+m+1)}$ and $\ov Y_2^{(n+m+1)}$ denote the
corresponding sums in the third equality above.

Let $\Delta_{m,i}^{(n)}$ be the convex polytope in $\Delta^{(n)}$ spanned
 by the vectors $\ov g_v^{(n+m,n)}$, $v \in V_{n+m+1,i}$. Then
 $\ov Y_1^{(n+m+1)} \in \Delta_{m,i}^{(n)}$. Since $Q_1^{(n+m+1)}
 \rightarrow 1$ as $m \rightarrow \infty$, we have $\ov Y_1^{(n+m+1)}
 \rightarrow \ov q^{(n)}$ as $m \rightarrow \infty$, hence
$$
d^*(\ov q^{(n)}, \Delta_{m,i}^{(n)}) \rightarrow 0
$$
as $m \rightarrow \infty$.
If $\mu'$ is another probability measure such that $\mu(\wh X_{B_{\ov i}})
 = 1$ and $q_w^{(n)'} = \mu(X_w^{(n)})$, then $d^*(\ov q^{(n)'},
 \Delta^{(n)}_{m,i}) \rightarrow 0$ as $m \rightarrow \infty$. Using the
 same arguments as in the proof of Theorem \ref{uniq_erg}, we see that
  condition $(d1)$ implies that $\mathsf{diam}(\Delta_{m,i}^{(n)})
  \rightarrow 0$
 as $m \rightarrow \infty$. Thus, $\ov q^{(n)} = \ov {q}^{(n)'}$ for every
  $n  = 1,2,\ldots$ and this implies $\mu = \mu'$. This equality means that
   $B_{\ov i}$ is uniquely ergodic.

\medskip
(3) Before passing to the proof of the property $(3)$, we need to prove
some additional
properties of the matrices $F_n$'s and $G_{(n+m,n)}$'s. The following
lemma is an analogue of part (1) of Corollary~\ref{estimate2}.

\begin{lemma}\label{prop4.3}
Let $1 \leq i_{n+m+1} \leq l_{n+m+1}$, and let $\ov i(m,n)$ be the
finite chain determined by $i_{n+m+1}$. Then, for every $v \in
V_{n+m+1,i'}$, $i' = i_{n+m+1}$, the following relation holds
$$
\sum_{w \in V_{n, i}} g_{v_m w}^{(n+m,n)} \geq C_n \rightarrow 1,\; \ \
n\rightarrow \infty,
$$
where $i = i_n$ is the last element of the chain $\ov i(m,n)$.
\end{lemma}

\begin{proof}
We have
$$
\begin{aligned}
\sum_{w \in V_{n,i}}g_{v w}^{(n+m,n)} & =
\sum_{(u_n,\ldots,u_1) \in
V_{n+m}\times\ldots\times V_{n+1}} f_{vu_n}^{(n+m)}
f_{u_n u_{n-1}}^{(n+m-1)}\cdots f_{u_2 u_1}^{(n+1)}\sum_{w \in
 V_{n,i}}f_{u_1w}^{(n)}\\
& \geq \sum_{(u_n,\ldots,u_1) \in V_{n+m, i_{n+m}}\times\ldots\times
 V_{n+1, i_{n+1}}} f_{vu_n}^{(n+m)}
 \cdots
  f_{u_2 u_1}^{(n+1)}\sum_{w \in V_{n,i}}f_{u_1w}^{(n)} \\
& \geq \left(\min_{u_1 \in V_{n+1}^{(i_n)}}\sum_{w \in V_{n,i}}
f_{u_1w}^{(n)}\right)
\cdots  \left(\min_{u_{n+1} \in V_{n+m+1}^{(i_{n+m})}}\sum_{u_{n} \in V_{n+m, i_{n+m}}}f_{u_{n+1}u_n}^{(m+n)}\right) \\
&\geq \prod_{s = n}^{\infty}\left(\min_{u \in V_{s+1}^{(i)}, 1 \leq i \leq l_s}\sum_{w \in V_{s,i}}f_{uw}^{(s)}\right)\\
& = C_n \rightarrow 1 \qquad \mbox{as} \ n \to \infty.
\end{aligned}
$$
 Thus, Lemma~\ref{prop4.3} is proved.
\end{proof}

We continue the proof of $(3)$. Let $\mu$ be an ergodic invariant probability
measure on $B$. We find a chain $\ov i$ such that the corresponding
subdiagram $B_{\ov i}$ supports $\mu$. Denote $q_w^{(n)} =
\mu(X_w^{(n)})$ and consider the sequence of vectors $\{\ov q^{(n)} =
(q_w^{(n)})_{w \in V_n}\}$, where each vector $\ov q^{(n)} \in
\Delta_{\infty}^{(n)}$ is considered as an extreme point of the convex set $
\Delta_{\infty}^{(n)}$, $n = 1,2,\ldots$

According to Lemma~\ref{remar3}, we can find a sequence of vectors $\{\ov
y^{(n,m)}(v_m)\}$, $v_m = v_m(n) \in V_{n+m+1}$ of $\Delta_m^{(n)}$
such that $\ov y^{(n,m)}(v_m) \rightarrow \ov q^{(n)}$ as $m \rightarrow
\infty$. Recall that
$$
\ov y^{(n,m)}(v_m) = F_n^T \cdots F_{n+m}^T (\ov e^{(n+m+1)}(v_m)) =
\ov g_{v_m}^{(n+m,n)}.
$$

Assume first that $v_m \notin V_{n+m+1,0}$ for infinitely many $m$. We
telescope the diagram so that $v_m \notin V_{n+m+1,0}$ for all $m$. We
will show that in this case $\mu$ coincides with the measure $\mu_i$
supported by subdiagram $B_{\ov i}$, where $B_{\ov i}$ is determined by
vectors $\{\ov y^{(n,m)}(v_m)\}$. First, we notice that there exists
a unique $1
\leq i_{n+m+1} \leq l_{n+m+1}$ such that $v_m \in V_{n+m+1, i_{n+m+1}}
$. The number $i_{n+m+1}$ determines the finite
 chain $\{i_{n+m+1}, i_{n+m}, \ldots, i_n\}$.

Then, by Lemma~\ref{prop4.3}, we have
\begin{equation}\label{4.1}
\sum_{w \in V_{n, i_n}} g_{v_m w}^{(n+m,n)} \geq C_n \rightarrow 1
\end{equation}
as $n \rightarrow \infty$. We choose $m_n$ such that,
for $m \geq m_n$, we have
\begin{equation}\label{eq d-star}
d^{*}(\ov g_{v_m}^{(n+m,n)}, \ov q^{(n)}) \leq \varepsilon_n
\end{equation}
 where $\varepsilon_n  \rightarrow 0$. This inequality implies that
$$
d^{*}(\ov g_{v_m}^{(n+m,n)}, \ov g_{v_s}^{(n+s,n)}) \leq 2 \varepsilon_n
$$
for $m,s \geq m_n$. 
Therefore,
\begin{equation}\label{4.2}
\sum_{w \in V_n}\left|g_{v_m w}^{(n+m,n)} - g_{v_s w}^{(n+s,n)}\right| \leq 2
\varepsilon_n
\end{equation}
for $m,s \geq m_n$.

We show that all vertices $v_m$ belong to the same chain.
Indeed, let 
$$
\{i_{n+s+1},i_{n+s}', i_{n+s-1}', \ldots, i_n'\}
$$
be the chain
determined by $i_{n+s+1}$ as above, $v_s \in V_{n+s+1, i_{n+s+1}}$.
Then condition (c1) and relation (\ref{4.2}) imply that $i_n' = i_n$. The
 property (\ref{eq d-star})  implies that
 $$
 g_{v_m w}^{(n+m,n)} \rightarrow q^{(n)}_w
 $$
 as $m \rightarrow \infty$ for every $w \in V_n$. Further,
we have from~(\ref{4.1}) that
$$
\sum_{w \in V_{n, i_n}} q^{(n)}_w = \lim_{m \rightarrow \infty}\sum_{w \in
V_{n, i_n}}g_{v_m w}^{(n+m,n)} \geq C_n.
$$
Since $C_n \rightarrow 1$, we obtain that
\begin{equation}\label{4.3}
\mu\left(\bigcup_{w \in V_{n, i_n}} X_w^{(n)}\right) \rightarrow 1
\end{equation}
as $n\rightarrow \infty$.
In this way we have found numbers $1 \leq i_n \leq l_n$ such
that~(\ref{4.3}) holds.
Using the same reasoning as above, we prove that $i_{n+1} \in
L_{n+1}^{(i_n)}$ for every $n \geq 1$, so $\ov i = (i_1, i_2, \ldots)$ forms
an infinite chain.
Therefore, we determined the subdiagram $B_{\ov i}$ corresponding to the
 chain $\ov i$, which supports measure $\mu$.
This allows us to conclude that $\mu = \mu_{\ov i}$.

To finish the proof of property $(3)$, we consider the case when $\ov
g_{v_m}^{(n+m,n)} \rightarrow \ov q^{(n)}$ as $m \rightarrow \infty$ and
$v_m \in V_{n+m+1,0}$ for infinitely many $m$, or, equivalently, for all $m
\geq 1$. Using $(e1)$, we can find, for every $v \in V_{n+1,0}$,  a vector
\begin{equation}\label{4.4}
\ov b^{(n)}(v) = \ov b^{(n)} = \sum_{s \notin V_{n+1,0}} B_s^{(n)}\ov
f_s^{(n)},
\end{equation}
where $\sum_{s \notin V_{n+1,0}} B_s^{(n)} = 1$ and $B_s^{(n)} \geq 0$
for all $s,n$, and such that
\begin{equation}\label{4.5}
d^*(\ov f_v^{(n)}, \ov b^{(n)}) \leq \varepsilon_n,
\end{equation}
where $\varepsilon_n \to 0 $ as $n\to \infty$.
We will show that, for every $m \geq 1$ and for every $v \in V_{n+m+1,0}$,
there exists a vector
$$
\ov b^{(n+m,n)}(v) = \ov b^{(n+m,n)} \in \mathsf{Conv}(\ov
g_s^{(n+m,n)}, \ s \in V_{n+m+1} \setminus V_{n+m+1,0})
$$
such that
$$
d^*(\ov g_v^{(n+m,n)}, \ov b^{(n+m,n)}) \leq \varepsilon_n \rightarrow 0,
\qquad \  n\to \infty.
$$
It follows from~(\ref{4.4}) and (\ref{4.5}) that, taking $n+m$ instead of $n$,
 we can find a vector
 $$\ov b^{(n+m)} = \sum_{s \notin V_{n+m+1,0}}
B_s^{(n+m)}\ov f_s^{(n+m)}
$$
where  $\sum_{s \notin V_{n+m+1,0}} B_s^{(n+m)} = 1$ and
$B_s^{(n+m)} \geq 0$, which  satisfies the relation
$$
\sum_{u \in V_{n+m}} \left|f_{vu}^{(n+m)} - \sum_{s \notin V_{n+m+1,0}}
B_s^{(n+m)}f_{su}^{(n+m)}\right| \leq \varepsilon_{n+m}.
$$
Define
$$
\ov b^{(n+m,n)}(v):= \sum_{s \notin V_{n+m+1,0}} B_s^{(n+1)} \ov
g_s^{(n+m,n)},
$$
where
$$\ov g_v^{(n+m,n)} = \sum_{u \in V_{n+m}} f_u^{(n+m)} \ov
g_u^{(n+m-1,n)}.
$$
Then we obtain
$$
\ov b^{(n+m,n)}(v) = \sum_{u \in V_{n+m}}\left(\sum_{s \notin
V_{n+m+1,0}}B_s^{(n+1)} f_{su}^{(n+m)}\right)\ov g_{u}^{(n+m-1,n)}.
$$

Combining the proved relations, we can finally obtain the following result:

$$
\begin{aligned}
d^*(\ov g_v^{(n+m,n)}, \ov b^{(n+m,n)}(v)) & = \sum_{w \in V_n}
| g_{vw}^{(n+m,n)} - b_{w}^{(n+m,n)}(v) | \\
& = \left. \sum_{w \in V_n} \right|\sum_{u \in V_{n+m}} f_{vu}^{(n+m)}
g_{uw}^{(n +m-1,n)} \\
& \ \ \ \ \ - \left.\sum_{u \in V_{n+m}}\left(\sum_{s \notin V_{n+m+1,0}}
 B_s^{(n+1)} f_{su}^{(n+m)}\right)g_{uw}^{(n+m-1,n)} \right|\\
 & = \left.\sum_{w \in V_n} \right|\sum_{u \in V_{n+m}}
 g_{uw}^{(n+m-1,n)}\times \\
 & \ \ \ \ \ \times \left(f_{vu}^{(n+m)}
  - \left.\sum_{s \notin V_{n+m+1,0}}B_s^{(n+1)} f_{su}^{(n+m)}\right)
  \right|\\
 & \leq  \left.\sum_{u \in V_{n+m}} \right|f_{vu}^{(n+m)} -
 \left.\sum_{s \notin V_{n+m
 +1,0}} B_s^{(n+1)} f_{su}^{(n+m)}\right| \times \\
 &\ \ \ \ \ \times \sum_{w \in V_n} g_{uw}^{(n+m-1,n)} \\
 & =  d^*(\ov f_v^{(n+m)}, \sum_{s \notin V_{n+m+1,0}}B_s^{(n+1)}
 \ov f_{s}^{(n+m)}) \\
 &\leq \varepsilon_{n+m} \leq \sup_{m \geq 1}\varepsilon_{n+m} =
 \varepsilon_n' \rightarrow 0.
\end{aligned}
$$
Here we used the fact that $\sum_{w \in V_n}g_{uw}^{(n+m-1,n)} = 1$.

On the other hand, we have $\ov b^{(n+1)}(v) \in \mathsf{Conv}(\ov
g_s^{(n+m,n)}, s \notin V_{n+m+1,0})$. Thus, we conclude that
 $\ov b^{(n+m,n)}
(v_m) \rightarrow \ov q^{(n)}$ as $m \rightarrow \infty$. The vector $\ov
q^{(n)}$, considered as  an extreme point of $\Delta_{\infty}^{(n)}$, is the limit
 vector of the
polytope $\Delta_{m,0}^{(n)} = \mathsf{Conv}(\ov g_s^{(n+m,n)}, s \notin
V_{n+m+1,0})$. Repeating the same arguments as in Lemma~\ref{remar3},
we obtain that $\ov b^{(n+m,n)}(v_m)$ must be one of the extreme points of
polytope $\Delta_{m,0}^{(n)}$, i.e. $\ov b^{(n+m,n)}(v_m) =
\ov g_{v_m'}^{(n+m,n)}$ for some $v_m' \notin V_{n+m+1,0}$. Therefore,
$v = v_{\ov i}$ for a chain $\ov i$, and this proves (3).
\end{proof}

Now we can prove the converse of Theorem~\ref{main1}, which we 
 obtain as a corollary of Theorem~\ref{main_gen_case}. 
We remark that the following result does not require the stochastic incidence matrices of a Bratteli diagram to have the property of regularly vanishing blocks.

\begin{corol}\label{inv_main_1}
Let $B$ be a Bratteli diagram of finite rank $k \geq 2$ with nonsingular 
stochastic incidence matrices $(F_n)$. Suppose that  after telescoping $B$ 
satisfies conditions $(a) - (e2)$ of Theorem~\ref{main1}. Then $B$ has $l$ 
ergodic probability  invariant measures.
\end{corol}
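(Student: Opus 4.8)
The plan is to recognize a finite-rank nonsingular diagram satisfying $(a)$--$(e2)$ as a special case of the framework of Section~\ref{sect 5} and to read off the conclusion from Theorem~\ref{main_gen_case}. After the telescoping furnished by Theorem~\ref{main1}, conditions $(a)$ and $(b)$ give a partition $V_n=V_{n,0}\sqcup V_{n,1}\sqcup\cdots\sqcup V_{n,l}$ of constant shape, with $|V_{n,i}|=k_i$ for all $n$. I would set $l_n\equiv l$ and take the trivial branching $L_{n+1}^{(i)}=\{i\}$, so that $V_{n+1}^{(i)}=V_{n+1,i}$ and the chain set $\mathcal L$ collapses to the $l$ constant chains $\ov i^{(j)}=(j,j,\dots)$, $j=1,\dots,l$, whose subdiagrams $B_{\ov i^{(j)}}$ are exactly the $B_j$ carried by $\bigcup_n V_{n,j}$. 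Once the hypotheses $(c1)$, $(d1)$, $(e1)$ of Theorem~\ref{main_gen_case} are checked, its three conclusions say that each $B_j$ is uniquely ergodic, that its measure extends to a finite invariant measure on $B$, and that the $l$ normalized extensions exhaust the ergodic invariant measures of $B$, so their number is precisely $l$.

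The conditions $(c1)$, $(d1)$ and $(e1.2)$ are immediate from $(c)$, $(d)$ and $(e2)$ and use only that there are finitely many blocks. For $(c1)$ I would rewrite, using row-stochasticity, $1-\min_{v\in V_{n+1,j}}\sum_{w\in V_{n,j}}f_{vw}^{(n)}=\max_{v\in V_{n+1,j}}\sum_{w\notin V_{n,j}}f_{vw}^{(n)}$, so that $(c)$ reads $\sum_n\max_{v\in V_{n+1,j}}\sum_{w\notin V_{n,j}}f_{vw}^{(n)}<\infty$ for each $j$; summing over $j\in\{1,\dots,l\}$ and using $V_{n+1}^{(i)}=V_{n+1,i}$ produces $(c1)$. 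Condition $(d1)$ is $(d)$ after a maximum over the finitely many $j$, each summand tending to $0$, and $(e1.2)$ is a verbatim restatement of $(e2)$.

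The main obstacle is condition $(e1.1)$, the vanishing $\sum_{w\in V_{n,0}}f_{vw}^{(n)}\to 0$ for $v\in V_{n+1,0}$, which is not among the hypotheses and does not follow from the volume condition $(e1)$ and $(e2)$ by any soft argument. The point is delicate: $(e1)$, together with $\ov a_j^{(n)}\to\ov y_j$, controls only the \emph{position} of each $V_{n,0}$-vector, forcing $\ov y^{(n)}(w)$ onto the affine hull $A=\mathrm{aff}(\ov y_1,\dots,\ov y_l)$, while a vector already lying in $A$ contributes no $l$-volume; and $(e2)$ merely bounds each $F_{vj}^{(n)}$ above by $1-C$, which is consistent with the internal mass $\sum_{w\in V_{n,0}}f_{vw}^{(n)}$ staying bounded away from $0$. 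My plan for $(e1.1)$ is to revisit the formation of $V_{n,0}$: using $(e1)$ to place the limits of the $V_{n,0}$-vectors inside $A$ and $(e2)$ to keep them off the vertices $\ov y_j$, I would re-telescope so that the $V_{n,0}$-vectors converge to genuinely non-extreme points of $\Delta^{(1)}_\infty$, and then iterate the recursion $\ov y^{(n+1)}(v)=\sum_w f_{vw}^{(n)}\ov y^{(n)}(w)$, invoking nonsingularity of the $F_n$ to rule out the $V_{n,0}$-block retaining asymptotic mass. This is the hard step and the heart of the corollary.

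Finally, it is worth noting that the upper bound can also be seen directly: once the $\ov y_j$ are known to be affinely independent, $(e1)$ gives $d(\ov y^{(n)}(w),A)\to 0$ for every $w\in V_n$, so by Lemma~\ref{remar3} every extreme point of $\Delta^{(1)}_\infty$ lies in $A$, whence $\Delta^{(1)}_\infty\subseteq A$ has dimension at most $l-1$ and, by Theorem~\ref{BKMS_measures=invlimits}(3) with nonsingularity, $B$ carries at most $l$ ergodic measures. The matching lower bound comes from the $l$ subdiagrams: $(d)$ makes each $B_j$ uniquely ergodic by the argument of Theorem~\ref{main_gen_case}(2), and $(c)$ with Proposition~\ref{1.6b} extends its unique measure to a finite, hence normalizable, invariant measure $\wh\mu_j$ on $B$; these $l$ measures are mutually distinct and, as finite extensions of ergodic measures from uniquely ergodic subdiagrams, ergodic (cf.\ \cite{AdamskaBezuglyiKarpelKwiatkowski2016}), hence give $l$ distinct extreme points of $\Delta^{(1)}_\infty$. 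Establishing the affine independence of the $\ov y_j$ needed for the upper bound is, however, tied to the same difficulty as $(e1.1)$, so the cleanest route remains the verification of $(e1.1)$ and the appeal to Theorem~\ref{main_gen_case}.
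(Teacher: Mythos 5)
Your main plan --- verifying $(e1.1)$ so that Theorem~\ref{main_gen_case} applies verbatim --- cannot be carried out: as you yourself half-observe, $(e1.1)$ is not implied by $(e1)$ and $(e2)$, and the paper's own proof of the corollary says so explicitly (``condition $(e1.1)$ is stronger than condition $(e1)$ and does not follow from $(a)-(e2)$''). No re-telescoping will produce it, since $(e1.1)$ asserts that the $V_{n,0}$-rows put asymptotically all their mass outside $V_{n,0}$, whereas hypotheses $(e1)$ and $(e2)$ are compatible with the $V_{n,0}$-block retaining mass bounded away from $0$. So the step you call ``the heart of the corollary'' is a dead end, and recommending it as ``the cleanest route'' is the genuine gap in your proposal.

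The route you relegate to a closing remark is in fact the paper's proof, and the obstruction you raise against it dissolves. The paper uses only parts $(1)$ and $(2)$ of Theorem~\ref{main_gen_case} (which do not need $(e1.1)$) to produce the measures $\wh\mu_1,\dots,\wh\mu_l$; exactly as in your last paragraph, these give $l$ distinct extreme points $\ov y_1,\dots,\ov y_l$ of $\Delta_\infty^{(1)}$. Since the stochastic incidence matrices are nonsingular, $\Delta_\infty^{(1)}$ is a simplex, and distinct extreme points of a simplex are automatically affinely independent --- so the affine independence you claim is ``tied to the same difficulty as $(e1.1)$'' comes for free from your own lower bound, with no circularity. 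With that in hand, $(e1)$ places any further extreme point $\ov z$ in the affine hull $A$ of $\ov y_1,\dots,\ov y_l$; since $\dim S(\ov y_1,\dots,\ov y_l)=\dim A$, a point $\ov z\in A\setminus S(\ov y_1,\dots,\ov y_l)$ that is extreme in $\Delta_\infty^{(1)}$ would contradict the simplex property, and a point of $S(\ov y_1,\dots,\ov y_l)$ other than one of the $\ov y_j$ is a nontrivial convex combination of points of $\Delta_\infty^{(1)}$ and hence not extreme. Therefore the extreme points are exactly $\ov y_1,\dots,\ov y_l$ and $B$ has exactly $l$ ergodic probability invariant measures. In short: promote your final paragraph to the proof and delete the attempt at $(e1.1)$.
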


\begin{proof}
Since $B$ has a finite rank, we can identify the sets $V_n = V$ for $n = 1,2,
\ldots$ and we have $l_n = l$, $L_n = \{1, \ldots, l\}$, $L_n^{(s)} = \{s\}$ 
for every $n = 1,2,\ldots$ and $s = 1,
\ldots, l$. The chains $\ov s$ reduce to the form $(s,s,\ldots)$. Notice that
conditions $(a)-(e2)$ imply conditions $(c1), (d1), (e1.2)$, while condition
$(e1.1)$ is stronger than condition $(e1)$ and does not follow from $(a)-
(e2)$. We define subdiagrams $B_{\ov s}$ as before and repeat the proof of
properties $(1)$ and $(2)$ in the same way as in Theorem~\ref{main_gen_case} (we do not use the property $(e1.1)$ in this part of
the proof). To complete the proof, it suffices to show that the vectors $\ov
y_1,\ldots, \ov y_l$ form the set of all extreme points of $\Delta_{\infty}^{(1)}$
(the notations were defined in Subsection~\ref{subsec4.1}). Assume that a
vector $\ov z$ is an additional extreme point of $\Delta_{\infty}^{(1)}$. Then we
can find a sequence of extreme points $\ov y^{(n)}(w)$ of $\Delta_n^{(1)}$, $w
\in V_n$, such that $\ov y^{(n)}(w) \rightarrow \ov z$ as $n \rightarrow
\infty$. Condition $(e1)$ guarantees that $\ov z$ is a vector of the plane
generated by $\ov y_1, \ldots, \ov y_l$. The dimension of the simplex
$S(\ov y_1, \ldots, \ov y_l)$ equals to the dimension of the above plane. If
$\ov z$ does not belong to $S(\ov y_1, \ldots, \ov y_l)$ then
$\Delta_{\infty}^{(1)}$ is not a simplex. Therefore, $\ov z  \in S(\ov y_1,
\ldots, \ov y_l)$ and this implies Property $(3)$ of Theorem~
\ref{main_gen_case}. Hence, the number of the probability ergodic invariant measures on $B$ equals $l$.
\end{proof}

We will need the following definitions to state the next proposition.
Let $B = (V,E)$ be a Bratteli diagram of finite rank $k$. Suppose that, for
$n\in \mathbb N$, we have a subset $W_n \subset V_n$ such that
 $|W_n| = k' = \mathrm{const}$, where
$1 \leq k' < k$. Let $B' = (\ov W, \ov E)$ be the subdiagram of rank $k'$
generated by $\{W_n\}_{n = 1}^{\infty}$. Then, after renumbering
vertices, $B'$ can be viewed as a ``vertical'' subdiagram of $B = (V,E)$.

An infinite $\sigma$-finite measure $\mu$ on $X_B$ is called \textit{regular
infinite} if there exists a clopen set such that $\mu$ takes a finite
(non-zero) value on this set.

Suppose that the  matrices $F_n$'s of a Bratteli diagram $B$ of rank $k$ are nonsingular and satisfy conditions $(a)-(e2)$ of Theorem~\ref{main1}. Then, by Corollary~\ref{inv_main_1}, 
$B$ has exactly $l$ ergodic invariant probability measures 
$\{\wh{\mu}_j\}_{j = 1}^l$. After renumbering each $V_n = V$, we can 
think of the corresponding subdiagrams $B_j = B_{\ov i_j}$ as of 
vertical subdiagrams. Each measure $\wh{\mu}_j$ is an extension of the 
unique ergodic measure ${\mu}_j$ on the subdiagram $B_j$.
In~\cite[Theorem 3.3]{BezuglyiKwiatkowskiMedynetsSolomyak2013} the
 following result describing the structure of all ergodic invariant measures on
 a finite rank diagram $B = (V,E)$ was proved.

\begin{prop}[\cite{BezuglyiKwiatkowskiMedynetsSolomyak2013}]
\label{1.6c}
(I) Each ergodic measure $\mu$ (finite or infinite) on $X_B$ is obtained as
an extension of finite ergodic measure from some vertical subdiagram
$B_{\mu} = (V_{\mu},E_{\mu})$;

(II) The number of finite or regular infinite ergodic invariant measures is not
greater than $k$;

(III) One can telescope the diagram $B$ in such a way that $V_{\mu} \cap
V_{\nu} = \emptyset$ for different ergodic measures $\mu$ and $\nu$;

(IV) Given a probability ergodic invariant measure $\mu$, there exists a
constant $\delta > 0$ such that for any $v \in V_{\mu}$ and any level $n$
one has $\mu(X_v^{(n)}) \geq \delta$;

(V) Each subdiagram $B_{\mu} = (V_{\mu},E_{\mu})$ is simple and uniquely
ergodic;

(VI) For all $v \notin V_\mu$, one has
$$
\lim_{n \rightarrow \infty} \mu(X_v^{(n)}) = 0.
$$

\end{prop}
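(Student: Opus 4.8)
The plan is to reduce the entire statement to the asymptotic clustering of the extreme vectors $\ov y^{(n)}(w)$ of the prelimit simplices $\Delta_n^{(1)}$, exactly as developed in Theorem~\ref{prop subdiagrams}, Theorem~\ref{estimate1}, and Corollary~\ref{estimate2}. First I would telescope $B$ so that $|V_n| = k$ for every $n$; if some incidence matrices are singular, Remark~(4) after Theorem~\ref{BKMS_measures=invlimits} lets me pass to the eventually constant number $l$ of extreme points and restrict the transition operators to the appropriate $l$-dimensional subspaces on which they act invertibly, without changing the set of ergodic measures. Every ergodic \emph{probability} measure then corresponds, via Theorem~\ref{Theorem_measures_general_case}, to an extreme point of the simplex $\Delta_\infty^{(1)}$, and there are at most $k$ of them. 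A regular infinite ergodic measure is treated the same way after normalizing it to a finite value on a suitable clopen tower, so that it becomes an extreme ray of the cone of invariant measures; the same finite-dimensional count bounds the total number of finite and regular infinite ergodic measures by $k$, which is part~(II).

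For parts (I), (III), and (VI) I would invoke Theorem~\ref{prop subdiagrams} directly: after a further telescoping it partitions each level into $V_{n,1},\ldots,V_{n,l},V_{n,0}$ so that $\ov y^{(n)}(w)\to \ov y_i$ for $w\in V_{n,i}$, with the groups $V_{n,i}$ ($1\le i\le l$) pairwise disjoint for all $n$. The vertical subdiagram $B_i=B_\mu$ carried by $\bigcup_n V_{n,i}$ is the desired support, and disjointness of these subdiagrams is precisely part~(III). For part~(VI), Corollary~\ref{estimate2}(2) and Theorem~\ref{estimate1} show that the mass leaking outside $V_{n,i}$ tends to zero; since the measure $\mu=\mu_i$ is concentrated on the single extreme point $\ov y_i$, its tower values $\mu(X_v^{(n)})$ on every vertex $v\notin V_{n,i}$ (whether transient in $V_{n,0}$ or belonging to another group $V_{n,j}$, $j\ne i$) must vanish as $n\to\infty$. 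Part~(I) then follows from Proposition~\ref{1.6b}: the summability condition $\sum_n\max_{v\in V_{n+1,i}}\sum_{w\notin V_{n,i}}f_{vw}^{(n)}<\infty$ is exactly condition~(c) of Theorem~\ref{main1}, which holds because $B$ carries only finitely many ergodic measures; hence the finite ergodic measure living on $B_\mu$ extends to the finite measure $\mu$ on $\wh X_{B_\mu}$.

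For part~(IV) I would use Lemma~\ref{lem barycentric coordinates}, which identifies the tower values $q_w^{(n)}=\mu(X_w^{(n)})$ with the barycentric coordinates of $\ov q^{(1)}=\ov y_i$ in the basis $\{\ov y^{(n)}(w)\}$; by Theorem~\ref{estimate1} these coordinates concentrate on $V_{n,i}$, so that $\sum_{w\in V_{n,i}}q_w^{(n)}\to 1$, and since $|V_{n,i}|\le k$ is bounded a pigeonhole argument produces a uniform lower bound $\delta>0$ with $\mu(X_v^{(n)})\ge\delta$ for every $v\in V_{n,i}$. For part~(V), unique ergodicity of $B_\mu$ is immediate from condition~(d) of Theorem~\ref{main1} (equivalently relation~(\ref{beta})) together with the criterion of Theorem~\ref{uniq_erg} applied to the restricted stochastic matrices of $B_i$. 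Simplicity of $B_\mu$ I would argue separately: the uniform positivity of the tower values from part~(IV), combined with ergodicity of $\mu_i$ forcing the tail relation to be minimal on $B_\mu$, shows that after telescoping every pair of vertices of $B_\mu$ is eventually joined by a path.

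The step I expect to be the main obstacle is part~(I), namely verifying the summability condition of Proposition~\ref{1.6b} and thereby the finiteness of the extension. Deriving condition~(c) of Theorem~\ref{main1} from the mere finiteness of the number of ergodic measures is delicate: it rests on the regularly-vanishing-blocks estimate, whose proof (the telescoped inductive bound leading to~(\ref{6}) and~(\ref{7}) inside Theorem~\ref{main1}) is the technical heart of the matter. A secondary difficulty is the simplicity assertion in part~(V), since simplicity is a combinatorial minimality property that is not read off directly from the measure-theoretic clustering and needs the separate connectivity argument sketched above.
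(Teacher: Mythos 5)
First, note that the paper itself does not prove Proposition \ref{1.6c}: it is quoted from \cite[Theorem 3.3]{BezuglyiKwiatkowskiMedynetsSolomyak2013} and used as an imported result. The nearest in-paper argument is the proof of Proposition \ref{connect_BKMS13}, which verifies that the measures $\wh{\mu}_j$ built in Sections \ref{sect 4}--\ref{sect 5} satisfy (I)--(VI); your plan follows that proof in outline (clustering of the extreme vectors $\ov y^{(n)}(w)$, Theorem \ref{prop subdiagrams}, Corollary \ref{estimate2}, Proposition \ref{1.6b}), so the comparison below is against it.

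The genuine gap is in your argument for (IV). Concentration of the total mass on $V_{n,i}$, i.e. $\sum_{w\in V_{n,i}}q_w^{(n)}\to 1$ with $|V_{n,i}|\le k$, yields by pigeonhole only that \emph{some} vertex of $V_{n,i}$ carries mass at least $(1-\varepsilon)/k$; it gives no uniform lower bound for \emph{every} $v\in V_{n,i}$, and such a bound can genuinely fail for the sets produced by Theorem \ref{prop subdiagrams}. Example \ref{non-simple-stat-ex} is the paper's own counterexample: there one may take $V_{n,1}=V_n$, yet the second vertex has $\mu(X_v^{(n)})\to 0$. The proof of Proposition \ref{connect_BKMS13} handles this by \emph{shrinking} $V_{n,j}$ to $V'_{n,j}$ --- discarding every vertex $w$ with $\max_{v\in V_{n+1,j}}f_{vw}^{(n)}\to 0$, re-deriving condition (c) of Theorem \ref{main1} for the reduced sets, and telescoping along levels where the surviving entries stay bounded below. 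This reduction, which your proposal omits, is also what delivers simplicity in (V); your alternative claim that ergodicity of $\mu_i$ ``forces the tail relation to be minimal on $B_\mu$'' is not valid as stated, since the same example gives an ergodic measure whose unreduced supporting subdiagram is non-simple. A secondary issue: your route to (I) passes through condition (c) of Theorem \ref{main1}, whose proof requires the regularly-vanishing-blocks hypothesis; Proposition \ref{1.6c} carries no such hypothesis, so within this paper's toolkit (I) cannot be obtained that way in full generality --- it is precisely the content imported from \cite{BezuglyiKwiatkowskiMedynetsSolomyak2013}.
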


In the following proposition, we will show how the results of Sections 4 and
5 are related to Proposition \ref{1.6c}.

\begin{prop}\label{connect_BKMS13} 
Let $\wh{\mu}_j$, $j = 1,\ldots,l$ be as above. Then the measures 
$\wh{\mu}_j$, $j = 1,\ldots,l$, coincide with the ergodic measures, 
described in Proposition~\ref{1.6c}.
\end{prop}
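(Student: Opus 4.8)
The plan is to show that the two descriptions of the probability ergodic measures — the one furnished by Corollary~\ref{inv_main_1} and the one recorded in Proposition~\ref{1.6c} — enumerate the extreme points of one and the same simplex $\mathcal{M}_1(B)$, and then to match each individual measure with its support by means of a telescoping-independent characterization.

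First I would invoke part~(2) of Theorem~\ref{Theorem_measures_general_case}: the probability ergodic $\mathcal{E}$-invariant measures on $B$ are precisely the extreme points of $\mathcal{M}_1(B)$, a set that is intrinsic to $B$ and independent of any particular description. By Corollary~\ref{inv_main_1}, under the present hypotheses this set consists of exactly the $l$ measures $\wh{\mu}_1,\ldots,\wh{\mu}_l$, each the extension of the unique ergodic measure $\mu_j$ from the vertical subdiagram $B_j$. On the other hand, Proposition~\ref{1.6c}(I) asserts that every ergodic measure on $X_B$ — in particular every probability ergodic measure — arises as an extension of a finite ergodic measure from some vertical subdiagram; hence the finite (equivalently, after normalization, probability) ergodic measures listed in Proposition~\ref{1.6c} are again exactly the extreme points of $\mathcal{M}_1(B)$. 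Since two enumerations of the same finite set must agree, the family $\{\wh{\mu}_1,\ldots,\wh{\mu}_l\}$ coincides, as a set, with the probability ergodic measures of Proposition~\ref{1.6c}.

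To pin down the correspondence measure-by-measure, I would use that the support of a probability ergodic measure $\mu$ is determined intrinsically: by Proposition~\ref{1.6c}(IV) and~(VI) one has $V_\mu = \{v : \liminf_n \mu(X_v^{(n)}) > 0\}$. For $\wh{\mu}_j$, the concentration established in the proof of Theorem~\ref{main_gen_case}(2) yields $\sum_{w \in V_{n,j}} \wh{\mu}_j(X_w^{(n)}) \to 1$, so that $\wh{\mu}_j(X_w^{(n)}) \to 0$ for every $w \notin V_{n,j}$; combined with the unique ergodicity of $B_j$ (Theorem~\ref{1.6a}) and the separation of the limiting extreme vectors (Theorem~\ref{estimate1}, Corollary~\ref{estimate2}), this identifies the vertices of asymptotically positive $\wh{\mu}_j$-mass with those of $B_j$. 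Matching this against the intrinsic description of $V_\mu$ then forces the measure $\mu$ from Proposition~\ref{1.6c} whose support sits on $B_j$ to be exactly $\wh{\mu}_j$.

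The main obstacle I anticipate is that the telescoping producing the partition $\{V_{n,i}\}$ in Theorem~\ref{prop subdiagrams} need not be the one used in Proposition~\ref{1.6c}(III) to disjointify the supports $V_\mu$. I would circumvent this by never comparing the two partitions directly, relying instead on the telescoping-invariant objects above — the extreme-point set of $\mathcal{M}_1(B)$ and the support $\{v : \liminf_n \mu(X_v^{(n)}) > 0\}$. Once the support of each ergodic measure is fixed in this intrinsic way, the ambiguity of the telescoping disappears and the bijection $\wh{\mu}_j \leftrightarrow \mu$ becomes canonical.
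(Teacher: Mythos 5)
Your first step --- that the two families are the same finite set of measures because both enumerate the extreme points of the intrinsic simplex $\mathcal{M}_1(B)$ --- is fine, and it corresponds to the paper's quick observation that conditions (I)--(III) and unique ergodicity in (V) of Proposition~\ref{1.6c} are immediate. The gap is in the second half. You assert that the concentration $\sum_{w \in V_{n,j}} \wh{\mu}_j(X_w^{(n)}) \to 1$, together with unique ergodicity of $B_j$ and separation of the limiting extreme vectors, ``identifies the vertices of asymptotically positive $\wh{\mu}_j$-mass with those of $B_j$.'' That implication is false in general: total mass concentrating on $V_{n,j}$ does not prevent an individual vertex $w \in V_{n,j}$ from having $\wh{\mu}_j(X_w^{(n)}) \to 0$, and unique ergodicity of $B_j$ does not exclude this either. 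Example~\ref{non-simple-stat-ex} is exactly such a case: there one may take $V_{n,1} = V_n$, the diagram is uniquely ergodic, yet the measure of the tower over the second vertex tends to $0$. Consequently the set $V_{n,j}$ produced by Theorem~\ref{main1} can be strictly larger than the support $V_\mu$ of Proposition~\ref{1.6c}, condition (IV) (the uniform lower bound $\mu(X_v^{(n)}) \geq \delta$) can fail on $V_{n,j}$, and the subdiagram $B_j$ need not be simple --- so your intrinsic-support matching breaks down precisely where it is needed.

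The paper's proof handles this by an explicit reduction that your proposal omits: whenever $\max_{v} f^{(n)}_{vw} \to 0$ for some $w \in V_{n,j}$, that vertex is moved from $V_{n,j}$ into $V_{n,0}$, and one re-verifies that the reduced set $V'_{n,j}$ still satisfies condition (c) of Theorem~\ref{main1} (the estimate $\sum_{w \notin V'_{n,j}} g^{(n+m,n)}_{vw} \to 0$ uniformly in $m$). Iterating finitely many times and telescoping yields nonempty sets on which (IV) holds with a uniform $\delta$, and this same reduction is what makes the supporting subdiagrams simple, completing (V). Condition (VI) then follows from $\max_{v} g^{(n+m,n)}_{vw} \to 0$ for $w \notin V_{n,j}$, which is the one part your concentration argument does deliver. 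To repair your proof you would need to add this reduction step (or an equivalent argument producing the uniform lower bound of (IV)); as written, the claim that the support of $\wh{\mu}_j$ is all of $V_{n,j}$ is the missing --- and in general false --- link.
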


\begin{proof}
Indeed, it is clear that the measures $\wh {\mu}_j$, $j = 1,\ldots,l$, 
satisfy conditions (I) - 
(III) and the condition of unique ergodicity in (V) of Proposition \ref{1.6c}. 
We need to prove that the measures and the corresponding subdiagrams 
satisfy conditions (IV), (VI), and the condition of simplicity in (V). 
 Recall that any $ \mathcal{E}$-invariant probability measure $\mu$ on $B$
  is determined by a vector $\ov q^{(1)}  \in 
  \Delta_\infty^{(1)}$. This vector
 defines a sequence of vectors $\ov q^{(n)} = (q_w^{(n)}) =
 (\mu(X_w^{(n)}))$, $w \in V_n$, such that, for $n = 1,2,\ldots$ and $m =
 0, 1, \ldots$, we have
$$
q_w^{(n)} = \sum_{v \in V_{n+m+1}}g_{vw}^{(n+m,n)}q_v^{(n+m+1)}.
$$
The vectors $\ov q^{(n)} = (q_w^{(n)})_{w \in V_n}$ also satisfy
 (\ref{1.3a}).

Without loss of generality, we can assume that the measure $\wh {\mu}_j$
is the ergodic invariant measure on $B$ defined by $\ov q^{(1)} = \ov y_j$ for some
$j = 1, \ldots, l$. Then we have $\ov y_j = \lim_{n \rightarrow \infty}\ov
y_w^{(n)}$ whenever $w \in V_{n,j}$. To find the vectors $\ov q^{(n)}$
corresponding to measure $\mu_j$, fix $n \geq 1$, we represent $\ov y_j$
as follows:
$$
\ov y_j = \lim_{m \rightarrow \infty} \ov y_v^{(n+m+1)} = \lim_{m
\rightarrow \infty} \sum_{w \in V_n}g_{vw}^{(n+m,n)}\ov y_w^{(n)} =
\sum_{w \in V_n} \ov y_w^{(n)} \lim_{m \rightarrow \infty} g_{vw}^{(n
+m,n)}.
$$
It follows from (\ref{1.3a}) that
$$
q_w^{(n)} = \lim_{m \rightarrow \infty} g_{vw}^{(n+m,n)}
$$
for $v \in V_{n+m+1,j}$.

To assure that the conditions (IV) and (V) are satisfied, we must
replace the sets $V_{n,j}, j = 1, \ldots, l$ by some subsets $V'_{n,j}$. 
Indeed, to prove that condition (IV) holds, we should show
that $\lim _{m \rightarrow \infty} g_{vw}^{(n+m,n)} \geq \delta$ for some
$\delta > 0$ and any $j = 1,\ldots,l$, and any $v \in V_{n+m,j}$  and 
$w \in V_{n,j}$. By condition (d) of Theorem~\ref{main1}, matrices $F_n$'s satisfy the condition
$$
\max_{v \in V_{n+1,j}}f_{vw}^{(n)} - \min_{v \in V_{n+1,j}}f_{vw}^{(n)}
\rightarrow 0
$$
for every $w \in V_{n,j}$.
Assume that for some $w \in V_{n,j}$, $n = 1,2,\ldots$ we have $\max
f_{vw}^{(n)} \rightarrow 0$ as $n \rightarrow \infty$. Set $V_{n,j}' =
V_{n,j} \setminus \{w\}$ and $V_{n,0}' = V_{n,0} \cup \{w\}$. Since $
\mu_j$ satisfies condition (c) of Theorem~\ref{main1}, is clear that the set
$V_{n,j}'$ is non-empty. Moreover, for $v \in V'_{n+m,j}$, the following
holds:
\begin{eqnarray*}
\sum_{w \notin V_{n,j}'}g_{vw}^{(n+m,n)} &=& \sum_{w \notin V_{n,j}'}
\sum_{u \in V_{n+1}} g_{vw}^{(n+m,n+1)} f_{uw}^{(n)}\\
&=& \sum_{u \in V_{n+1,j}} \sum_{w \notin V_{n,j}'} g_{vw}^{(n+m,n+1)}
f_{uw}^{(n)} + \sum_{u \notin V_{n+1,j}} g_{vw}^{(n+m,n+1)} \sum_{w
\notin V_{n,j}'} f_{uw}^{(n)}\\
&\leq& \max_{u \in V_{n+1,j}}\sum_{w \notin V_{n,j}'} f_{uw}^{(n)} +
\max_{v \in V_{n+m+1,j}}\sum_{u \notin V_{n+1,j}} g_{vu}^{(n+m,n+1)}\\
&\rightarrow& 0 \mbox{ as } n \rightarrow \infty
\end{eqnarray*}
uniformly with respect to $m$.

Now one can repeat the same reasoning as in the proof of condition (c) of 
Theorem~\ref{main1} and get
$$
\sum_{n=1}^\infty \left(1 - \min_{v \in V_{n+1,j}'} \sum_{w \in V_{n,j}'}
f_{vw}^{(n)}\right) < \infty.
$$

We construct the sets $V_{n,j}$, $j = 1, \ldots, l$ and $V_{n,0}$ as in the
proof of Theorem~\ref{main1}. If there are infinitely many levels $n_k$ such
that $\max_{v \in V_{n+1,j}}f_{vw}^{(n_k)} \geq \delta > 0$ for $k = 1,2,
\ldots$ and $w \in V_{n_k,j}$, then we telescope the diagram with respect
to $n_k$ and condition (IV) of Proposition~\ref{1.6c} is proved. Otherwise,
if $\max_{v \in V_{n+1,j}}f_{vw}^{(n)}\rightarrow 0$ as $n \rightarrow
\infty$ for some $w \in V_{n,j}$, then we replace $V_{n,j}$ with $V_{n,j}' =
V_{n,j} \setminus \{w\}$. Repeating this procedure finitely many times, we
find  non-empty sets $V_{n,j}$ satisfying condition (IV) of Proposition~\ref{1.6c}. The described reduction of the sets $V_{n,j}, j = 1, \ldots, l$ to the sets $V'_{n,j}, j = 1, \ldots, l$ also implies that the subdiagrams $B'_j$ are simple.

Now we prove condition (VI). If $w \notin V_{n,j}$ then $\max_{v \in V_{n+m,j}}g_{vw}^{(n+m,n)}
\rightarrow 0$ as $m \rightarrow \infty$ for $n = 1,2,\ldots$ and
$$
\lim_{n \rightarrow \infty} q_w^{(n)} = \lim_{n \rightarrow \infty}
\mu_j(X_w^{(n)}) = 0.
$$
Thus, condition (VI) holds.
\end{proof}

\begin{remar}
Theorem~\ref{main1} does not guarantee that the subdiagrams $B_i$
 corresponding to the vertices from $V_{n,i}$ are simple. But one can reduce
  these subdiagrams to the smallest possible ones, as was described above in
the proof of Proposition~\ref{connect_BKMS13}, and obtain simple uniquely 
ergodic subdiagrams. After reduction, the subdiagrams $B_i$ are the same 
subdiagrams that were considered in~\cite{BezuglyiKwiatkowskiMedynetsSolomyak2013}.  For instance, in 
Example~\ref{non-simple-stat-ex}, one can take $V_{n,1} = V_n$ and 
$V_{n,0} = \emptyset$ for all $n$. After reduction, we obtain that 
$V'_{n,1}$ consists only of the first vertex on each level $n$, and $V'_{n,0}$ 
consists of the second one.
\end{remar}

\section{Examples}\label{sect 6}

\subsection{Stationary Bratteli diagrams}
A Bratteli diagram of finite rank $B = (V,E)$ is called \textit{stationary},
if $\tl F_n = \tl F$ for $n = 1,2,\ldots$ The paper~\cite{BezuglyiKwiatkowskiMedynetsSolomyak2010} contains an explicit
description of all ergodic invariant probability measures on $B$. Using the
main results of~\cite{BezuglyiKwiatkowskiMedynetsSolomyak2010}, we can
indicate the sets $V_{n,j}$ and the ergodic measures $\wh{\mu}_j$, $j = 1,
\ldots,l$.

 For the reader's convenience, we recall the necessary definitions
and results from~\cite{BezuglyiKwiatkowskiMedynetsSolomyak2010}. In this
subsection, by $\ov x$ we denote a  vector, either column or row one, it will
be either mentioned explicitly, or understood from the context.

The incidence matrix $\tl F = (\tl f_{vw})_{v,w \in V}$ defines a directed
graph $G(\tl F)$ in a following way: the set of the vertices of $G(\tl F)$ is
equal to $V$ and there is a directed edge from a vertex $v$ to a vertex $w$
if and only if $\tl f_{vw} > 0$. The vertices $v$ and $w$ are equivalent (we
write $v \sim w$) if either $v = w$ or there is a path in $G(\tl F)$ from $v$
to $w$ and also a path from $w$ to $v$. Let $\mathcal{E}_1,\ldots,
\mathcal{E}_m$ denote all equivalence classes in $G(\tl F)$. We will also
identify $\mathcal{E}_{\alpha}$ with the corresponding subsets of $V$. We
write $\mathcal{E}_{\alpha} \succeq \mathcal{E}_{\beta}$ if either
$\mathcal{E}_{\alpha} = \mathcal{E}_{\beta}$ or there is a path in
$G(\tl F)$ from a vertex of $\mathcal{E}_{\alpha}$ to a vertex of
$\mathcal{E}_{\beta}$. We write $\mathcal{E}_{\alpha} \succ
\mathcal{E}_{\beta}$ if $\mathcal{E}_{\alpha} \succeq
\mathcal{E}_{\beta}$ and $\mathcal{E}_{\alpha} \neq
\mathcal{E}_{\beta}$. Every class $\mathcal{E}_{\alpha}$, $\alpha = 1,
\ldots,m$, defines an irreducible submatrix $\tl F_{\alpha}$ of $\tl F$
obtained by restricting $\tl F$ to the set of vertices from
$\mathcal{E}_{\alpha}$. Let $\rho_{\alpha}$ be the spectral radius of
$\tl F_{\alpha}$, i.e.
$$
\rho_{\alpha} = \max\{|\lambda|: \lambda \in
\mathsf{spec}(\tl F_{\alpha})\},
$$
where by $\mathsf{spec}(\tl F_{\alpha})$ we mean the set of all complex
numbers $\lambda$ such that there exists a non-zero vector $\ov x =
(x_v)_{v \in \mathcal{E}_{\alpha}}$ satisfying $\ov x \tl{F}_{\alpha} =
\lambda \overline x$.

A class $\mathcal{E}_{\alpha}$ is called \textit{distinguished} if
\begin{equation}\label{eq disting class}
\rho_{\alpha} > \rho_{\beta} \ \ \mathrm{whenever} \ \ \
\mathcal{E}_{\alpha} \succ \mathcal{E}_{\beta}
\end{equation}
(in~\cite{BezuglyiKwiatkowskiMedynetsSolomyak2010} the notion of being
distinguished is defined in an opposite way because it is based on the matrix
transpose to the incidence matrix).

The real number $\lambda$ is called a \textit{distinguished eigenvalue} if
 there exists a non-negative left-eigenvector $\ov x = (x_v)_{v \in V}$ such
 that $\ov x \tl F = \lambda \ov x$. It is known (Frobenius theorem) that $
 \lambda$ is a distinguished eigenvalue if and only if $\lambda =
 \rho_{\alpha}$ for some distinguished class $\mathcal{E}_{\alpha}$.
 Moreover, there is a unique (up to scaling) non-negative eigenvector
 $\ov  x(\alpha) = (x_v)_{v \in V}$, $\ov x(\alpha)\tl F = \rho_{\alpha}
 \ov  x(\alpha)$ such that $x_v > 0$ if and only if there is a path from a
  vertex  of $\mathcal{E}_\alpha$ to the vertex $v$. The distinguished class
   $\alpha$ defines a measure $\mu_{\alpha}$ on $B = (V,E)$ as follows:
$$
\mu_{\alpha}(X_v^{(n)}) = \frac{x_v}{\rho_{\alpha}^{n-1}} h_v^{(n)}, \ \
\ v \in V_n = V.
$$
The main result of~\cite{BezuglyiKwiatkowskiMedynetsSolomyak2010} says
 that the set $\{\mu_{\alpha}\}$, where $\alpha$ runs over all distinguished
 vertex classes, generates the simplex of  all  $\mathcal E$-invariant
 probability  measures on the stationary Bratteli diagram $B = (V,E)$.

In the next proposition, we relate the distinguished classes to the
subsets $V_{n,j}$ of $V$ defined in Section \ref{sect 4}.

\begin{prop} Let $B = (V,E)$ be a stationary Bratteli diagram and $V_{n,j}$, $j = 1,\ldots,l$ be subsets of vertices defined in Section~\ref{sect 4}. Then the distinguished classes $\alpha$ (as subsets of $V$)
coincide with the sets $V_{n,j}$, $j = 1,\ldots,l$.
\end{prop}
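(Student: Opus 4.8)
The plan is to show that the BKMS2010 description of $\mathcal M_1(B)$ via distinguished classes and the description from Section~\ref{sect 4} via the sets $V_{n,j}$ are two parametrizations of the same finite set of ergodic measures, and then to match the two vertex by vertex using the explicit formula $\mu_\alpha(X_v^{(n)}) = x_v(\alpha)h_v^{(n)}/\rho_\alpha^{n-1}$. First I would set up the bijection between indices: by the recalled result of \cite{BezuglyiKwiatkowskiMedynetsSolomyak2010} the extreme points of $\mathcal M_1(B)$ are exactly the measures $\mu_\alpha$ as $\alpha$ runs over the distinguished classes, while by Theorem~\ref{Theorem_measures_general_case} and Theorem~\ref{prop subdiagrams} they are also the measures $\wh{\mu}_j$ attached to the extreme points $\ov y_j$ of $\Delta_\infty^{(1)}$ and to the sets $V_{n,j}$. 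Since the affine map $\mu \mapsto \ov q^{(1)}$ is injective, each $\ov y_j$ is the level-$1$ vector of a unique $\mu_{\alpha_j}$; this gives a bijection $j \leftrightarrow \alpha_j$ and shows in passing that $l$ equals the number of distinguished classes. After telescoping the stationary diagram I may assume, as in Theorem~\ref{main1}, that $V_{n,j}=V_j$ is independent of $n$, and (using the reduction performed in the proof of Proposition~\ref{connect_BKMS13}) that the subdiagrams $B_j$ are the simple, uniquely ergodic subdiagrams obeying conditions (IV) and (VI) of Proposition~\ref{1.6c}.

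The heart of the argument is then a single growth estimate. Writing $q_v^{(n)} = \mu_{\alpha_j}(X_v^{(n)}) = x_v(\alpha_j)h_v^{(n)}/\rho_{\alpha_j}^{n-1}$, I would characterize $V_j$ as the set of vertices on which this quantity stays bounded away from $0$ (condition (IV)), all remaining vertices having $q_v^{(n)}\to 0$ (condition (VI)). Splitting $V$ into three parts relative to $\mathcal E_{\alpha_j}$: if $v\in\mathcal E_{\alpha_j}$ then $x_v(\alpha_j)>0$ and, since $v$ lies in the dominant class, $h_v^{(n)}=\sum_w(\tl F^{n-1})_{vw}h_w^{(1)}$ grows at exactly the rate $\rho_{\alpha_j}^{n-1}$, so $h_v^{(n)}/\rho_{\alpha_j}^{n-1}\to c_v>0$, whence $q_v^{(n)}$ is bounded below and $v\in V_j$; if $v$ is not downstream of $\mathcal E_{\alpha_j}$ in $G(\tl F)$ then $x_v(\alpha_j)=0$ and $q_v^{(n)}=0$; finally, if $\mathcal E_{\alpha_j}\succ[v]$ (so $v$ is strictly below the distinguished class) then $x_v(\alpha_j)>0$ but $h_v^{(n)}$ grows only at the rate of the largest spectral radius reachable from $[v]$, which by the definition~(\ref{eq disting class}) of a distinguished class is strictly smaller than $\rho_{\alpha_j}$; hence $h_v^{(n)}=o(\rho_{\alpha_j}^{n-1})$ and $q_v^{(n)}\to 0$. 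Combining the three cases yields $V_j=\mathcal E_{\alpha_j}$, and since $j$ ranges over a full set of distinguished classes the two families of subsets coincide.

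I expect the third case to be the main obstacle. It reflects the phenomenon that $\ov y^{(n)}(v)$ may converge to the extreme point $\ov y_j$ even for vertices $v$ that do not lie in $\mathcal E_{\alpha_j}$ but merely feed into it, so that the raw sets produced by Theorem~\ref{prop subdiagrams} can be strictly larger than the distinguished classes; this is exactly what forces one to pass to the reduced simple subdiagrams and to separate the ``asymptotic direction'' of $\ov y^{(n)}(v)$ from the ``asymptotic mass'' $q_v^{(n)}$. The estimate $h_v^{(n)}=o(\rho_{\alpha_j}^{n-1})$ for $v$ below the distinguished class is the one nontrivial input; it follows from the reducible Perron--Frobenius description of the growth of $h_v^{(n)}$ together with the strict spectral-radius inequality~(\ref{eq disting class}) built into distinguishedness, and it is precisely what condition (VI) of Proposition~\ref{1.6c} records.
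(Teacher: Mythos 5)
Your argument is correct, but it reaches the conclusion by a genuinely different route than the paper. The paper identifies the sets $V_{n,j}$ by verifying the matrix conditions of Theorem~\ref{main1} directly on the equivalence classes: it shows that a distinguished class satisfies condition $(c)$ (the off-class row sums of $F_n$ decay geometrically like $\bigl(\max_{\beta\prec\alpha}\rho_\beta/\rho_\alpha\bigr)^n$, hence are summable) and condition $(d)$ (via $f_{vw}^{(n)}\to \tl f_{vw}/\rho_\alpha$ together with Theorem~\ref{1.6a}), that a non-distinguished class fails condition $(c)$ (a class $\mathcal E_{\beta_0}\prec\mathcal E_\alpha$ with $\rho_{\beta_0}\geq\rho_\alpha$ keeps the off-class row sum bounded below by a positive constant), and finally that each $V_{n,j}$ sits inside a single equivalence class because $f_{vw}^{(n)}>0$ forces $\tl f_{vw}>0$. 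You instead characterize $V_{n,j}$ through the tower masses $\mu_{\alpha_j}(X_v^{(n)})=x_v h_v^{(n)}/\rho_{\alpha_j}^{n-1}$ and conditions (IV)/(VI) of Proposition~\ref{1.6c}, reducing everything to the growth dichotomy $h_v^{(n)}\asymp\rho_{\alpha_j}^{n-1}$ on the class versus $h_v^{(n)}=o(\rho_{\alpha_j}^{n-1})$ strictly below it; both proofs ultimately rest on the same Perron--Frobenius input (the paper's (\ref{(a)})--(\ref{(b)})) and on the strict inequality (\ref{eq disting class}). Your version is shorter once Proposition~\ref{connect_BKMS13} is granted, while the paper's stays at the level of the stochastic matrices and does not route through the measure-extension machinery. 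You are also right, and more explicit than the paper, about the need to pass to the reduced (minimal) sets $V'_{n,j}$: as Example~\ref{non-simple-stat-ex} illustrates, the raw sets produced by Theorem~\ref{prop subdiagrams} can strictly contain a distinguished class, and the paper handles this only by the parenthetical remark that the $V_{n,j}$ are taken to be minimal sets satisfying Theorem~\ref{main1}.
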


\begin{proof}
To show this, we represent the matrix $\tl F$ in the Frobenius normal form
(similarly to the way it was done in~
\cite{BezuglyiKwiatkowskiMedynetsSolomyak2010}):
$$
F =\left(
  \begin{array}{ccccccc}
    \tl F_1 & 0 & \cdots & 0 & 0 & \cdots & 0 \\
    0 & \tl F_2 & \cdots & 0 & 0 & \cdots & 0 \\
    \vdots & \vdots & \ddots & \vdots & \vdots & \cdots& \vdots \\
    0 & 0 & \cdots & \tl F_s & 0 & \cdots & 0 \\
    Y_{s+1,1} & Y_{s+1,2} & \cdots & Y_{s+1,s} & \tl F_{s+1} & \cdots & 0
    \\
    \vdots & \vdots & \cdots & \vdots & \vdots & \ddots & \vdots \\
    Y_{m,1} & Y_{m,2} & \cdots & Y_{m,s} & Y_{m,s+1} & \cdots & \tl F_m
    \\
  \end{array}
\right)
$$
where all $\{\tl F_i\}_{i = 1}^m$ are irreducible square matrices, and for any
$j = s+1,\ldots,m$, at least one of the matrices $Y_{j,u}$ is non-zero. All
classes $\{\mathcal{E}_{\alpha}\}_{\alpha = 1}^s$ $(s \geq 1)$, are
distinguished (there is no $\beta$ such that $\alpha > \beta$). For every
$\alpha \geq s+1$ such that $\mathcal{E}_{\alpha}$ is a distinguished class
and for every $1 \leq \beta < \alpha $ we have either $\mathcal{E}_{\beta}
\prec \mathcal{E}_{\alpha}$ and $\rho_{\beta} < \rho_{\alpha}$, or there
is no relation between $\mathcal{E}_{\alpha}$ and $\mathcal{E}_{\beta}$.

We note that, for a stationary Bratteli diagram, the entries of the
stochastic matrix $F_n$ are
\begin{equation}\label{eq entries}
f_{vw}^{(n)} = \tl f_{vw}\frac{h_w^{(n)}}{h_v^{(n+1)}},\ \ \ v \in
V_{n+1},  w \in V_n.
\end{equation}
We use the notation introduced in Sections 4 and 5 and denote by
$\tl G^{(n)} = (\tl g^{(n)}_{vw})_{v \in V_{n+1}, w \in V_1}$ the $n$-th
power $\tl F^n$ of the incidence matrix $\tl F$. By Perron-Frobenius
 theorem the following relation holds:
\begin{equation}\label{(a)}
a_{vw} := \lim_{n \rightarrow \infty}\frac{\tl g_{vw}^{(n)}}
{\rho_{\alpha}^{(n)}} > 0
\end{equation}
whenever $v \in \mathcal{E}_{\alpha}$, $w \in \mathcal{E}_{\beta}$, and
$\mathcal{E}_{\alpha} \succeq \mathcal{E}_{\beta}$
(see~\cite[Section 4]{BezuglyiKwiatkowskiMedynetsSolomyak2010}).

Now, consider the distinguished class $\mathcal{E}_{\alpha}$,
and let $v \in \mathcal{E}_{\alpha}$. We have
$$
h_v^{(n+1)} = \sum_{w \in V_1}\tl g_{vw}^{(n)} h_w^{(1)} =
\sum_{\mathcal{E}_{\beta} \preceq \mathcal{E}_{\alpha}}
\left(\sum_{w \in \mathcal{E}_{\beta}}\tl g_{vw}^{(n)} h_w^{(1)}\right).
$$
It follows  from (\ref{(a)}) that
\begin{equation}\label{(b)}
\lim_{n \rightarrow \infty}\frac{h_{v}^{(n+1)}}{\rho_{\alpha}^{(n)}} =
\sum_{\mathcal{E}_{\beta} \preceq \mathcal{E}_{\alpha}} \left(\sum_{w
\in \mathcal{E}_{\beta}}a_{vw} h_w^{(1)}\right)
\end{equation}
is a constant depending on $\alpha$.
Further, using~(\ref{(b)}), we compute
\begin{eqnarray*}
\sum_{w \notin \mathcal{E_{\alpha}}} f_{vw}^{(n)} &=&  \sum_{w \notin
\mathcal{E_{\alpha}}}\tl f_{vw}\frac{h_w^{(n)}}{h_v^{(n+1)}} \\
 &=&  \sum_{\mathcal{E}_{\beta} \prec \mathcal{E}_{\alpha}}
 \left(\sum_{w \in \mathcal{E}_{\beta}}\tl f_{vw}\frac{h_w^{(n)}}{h_v^{(n
 +1)}} \right)\\
 &=&   \sum_{\mathcal{E}_{\beta} \prec \mathcal{E}_{\alpha}}
 \left(\sum_{w \in \mathcal{E}_{\beta}} \tl f_{vw} \frac{h_w^{(n)}}
 {\rho_{\beta}^{n}} \frac{\rho_{\alpha}^{n}}{h_v^{(n+1)}}
 \left(\frac{\rho_{\beta}}{\rho_{\alpha}}\right)^{n} \right)\\
 &\leq& C \left(\frac{\max_{\mathcal{E}_{\beta} \prec
 \mathcal{E}_{\alpha}}\rho_{\beta}}{\rho_{\alpha}}\right)^{n},
 \end{eqnarray*}
where $C$ is a positive constant. Thus, we conclude using
(\ref{eq disting class}) that
$$
\sum_{n = 1}^{\infty}\left(\max_{v \in \mathcal{E}_{\alpha}}\sum_{w
\notin \mathcal{E}_{\alpha}}f_{vw}^{(n)}\right) < \infty.
$$

In other words, we have shown that the distinguished classes (sets of
vertices) $\mathcal{E}_{\alpha}$ satisfy condition $(c)$ of Theorem~
\ref{main1}. Show that these classes satisfy also condition $(d)$ of
Theorem \ref{main1}.
Indeed, it follows from (\ref{(b)}) that, for $v,w \in \mathcal{E}_{\alpha}$,
$$
\lim_{n \rightarrow \infty} f_{vw}^{(n)} = \lim_{n \rightarrow \infty}\tl
f_{vw}\frac{h_w^{(n)}}{h_v^{(n+1)}}
= \tl f_{vw} \frac{1}{\rho_{\alpha}}
$$
We can assume, without loss of generality,  that $\tl f_{vw} \geq 1$ for all
$v,w \in \mathcal{E}_{\alpha}$ since $\tl F_{\alpha}$ is an irreducible
matrix. Then we apply Theorem~\ref{1.6a} to get condition $(d)$.

Next, we show that the non-distinguished classes $\mathcal{E}_{\alpha}$ do
not satisfy condition $(c)$. We can find a class $\mathcal{E}_{\beta_0}
\prec \mathcal{E}_{\alpha}$ such that $\rho_{\beta_0} \geq
\rho_{\alpha}$. By~(\ref{(b)}), for any $v \in \mathcal{E}_{\alpha}$ we
have
$$
\sum_{w \notin \mathcal{E}_{\alpha}} f_{vw}^{(n)} \geq \sum_{w \in
\mathcal{E}_{\beta_0}}  f_{vw}^{(n)} = \sum_{w \in
\mathcal{E}_{\beta_0}}  \tl f_{vw}\frac{h_w^{(n)}}{h_{v}^{(n+1)}} \geq
C' \left(\frac{\rho_{\beta_0}}{\rho_{\alpha}}\right)^n \geq C'
$$
for some positive constant $C'$.
Thus,
$$
\sum_{n = 1}^{\infty}\left(\max_{v \in \mathcal{E}_{\alpha}}\sum_{w
\notin \mathcal{E}_{\alpha}}f_{vw}^{(n)}\right) = \infty
$$
and condition $(c)$ is not satisfied.

To finish the proof that the sets $V_{n,j}$, $j = 1,\ldots,l$, coincide with the
distinguished classes $\mathcal{E}_{\alpha}$, it remains to show that each
set $V_{n,j}$ is contained in some equivalence class
$\mathcal{E}_{\alpha}$. Let $v,w \in V_{n,j}$. Condition $(c)$ implies that
$f_{vw}^{(n)} > 0$ for sufficiently large $n$ (the sets $V_{n,j}$ are
minimal sets satisfying Theorem~\ref{main1}). By (\ref{eq entries}),
we see that $f_{vw}^{(n)} > 0$ implies that $\tl f_{vw}^{(n)} > 0$
 for all $v,w \in V_{n,j}$, i.e., $v \sim w$ and
$V_{n,j} \subset \mathcal{E}_{\alpha}$ for some $\alpha = 1,\ldots,m$.
\end{proof}

\subsection{Pascal-Bratteli diagrams}

Theorem~\ref{main_gen_case} defines a class of Bratteli diagrams $B =
(V,E)$ such that the set $\mathcal{M}_1(B)$ of all invariant probability
measures coincides with the set $\mathcal{L}$ of all infinite chains. Each
ergodic probability invariant measure $\wh{\mu}_{\ov i}$ is an extension of
a unique invariant measure $\mu_{\ov i}$ from the subdiagram $B_{\ov i}$,
and the sets $X_{B_{\ov i}}$ are pairwise disjoint.

Consider the
\textit{Pascal-Bratteli} diagram $B_p = (V,E)$, see e.g.
\cite{MelaPetersen2005} or \cite{Vershik_2011, Vershik_2014,
FrickPetersenShields2017} for more information.
Our goal is to show that  the set $\mathcal{M}_1(B_p)$ has different
structure  than that of the set $\mathcal{L}$. Recall that, for the
Pascal-Bratteli diagram, we have $V_n = \{0,1,\ldots, n\}$ for $n = 0,1,
\ldots$, and the entries $\tl f_{ki}^{(n)}$ of the incidence matrix
$\tl F_n$  are of the form
$$
\tl f_{ki}^{(n)} =
\left\{
\begin{aligned}
1 &, \mbox{ if } i = k \mbox{ for } 0 \leq k < n+1,\\
1 &, \mbox{ if } i = k - 1 \mbox{ for } 0 < k \leq n+1,\\
0 &, \mbox{ otherwise}.\\
\end{aligned}
\right.
$$
where $k = 0, \ldots, n+1$,  $i = 0,\ldots,n$.
Moreover,
$$
h_i^{(n)} = {n \choose i},
$$
for $i = 0,\ldots,n$. Then we find the entries of the stochastic matrix
$F_n$:
\begin{equation}\label{Pascal_stochastic}
f_{ki}^{(n)} =
\left\{
\begin{aligned}
\frac{k}{n+1} &, \mbox{ if } i = k -1 \mbox{ and } 0 < k \leq n+1,\\
1 - \frac{k}{n+1} &, \mbox{ if } i = k \mbox{ and } 0 \leq k < n+1,\\
0 &, \mbox{ otherwise}.\\
\end{aligned}
\right.
\end{equation}
It is known~(see e.g. \cite{MelaPetersen2005}) that each ergodic invariant
 probability measure has the form $\mu_p$, $0 < p < 1$, where
$$
\mu_p\left(X_i^{(n)}\right) = {n \choose i}p^i (1-p)^{n-i}, \quad i = 0,
\ldots,n.
$$

\begin{prop} For the Pascal-Bratteli diagram,
the set $\mathcal{L}$ of all infinite chains $\overline{i}$ is empty.
\end{prop}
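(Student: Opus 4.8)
The plan is to argue by contradiction, playing the concentration forced on a chain against the fact that every ergodic measure $\mu_p$ on $B_p$ spreads its mass over all vertices of each level. Suppose $\mathcal{L} \neq \emptyset$ and fix a chain $\ov i = (i_1, i_2, \ldots) \in \mathcal{L}$. By Theorem~\ref{main_gen_case}, parts (1) and (2), the subdiagram $B_{\ov i}$ is uniquely ergodic and its measure extends to a finite invariant measure $\wh\mu_{\ov i}$ on $B_p$; after normalization $\wh\mu_{\ov i}$ is an ergodic invariant probability measure, and since the only such measures are the non-atomic Bernoulli measures $\mu_p$ with $0 < p < 1$, we have $\wh\mu_{\ov i} = \mu_p$ for some $0 < p < 1$.

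First I would show that condition $(d1)$ of the Section~\ref{sect 5} framework forces every block $V_{n,j}$ to be a singleton once $n$ is large. Using the explicit entries in~(\ref{Pascal_stochastic}), for two distinct vertices $k \neq k'$ of $V_{n+1}$ the row supports are $\{k-1,k\}$ and $\{k'-1,k'\}$; when $|k-k'| \geq 2$ these are disjoint, so
$$
\sum_{w \in V_n} \left| f_{kw}^{(n)} - f_{k'w}^{(n)}\right| = 2,
$$
while for $k' = k+1$ a direct computation gives
$$
\sum_{w \in V_n} \left| f_{kw}^{(n)} - f_{k'w}^{(n)}\right| = \frac{n + |n - 2k|}{n+1} \geq \frac{n}{n+1}.
$$
In either case the quantity stays bounded away from $0$, so $(d1)$ can hold only if $|V_{n+1,j}| = 1$ for every $j$ and all large $n$. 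Hence, after telescoping, the chain block is a single vertex, $V_{n,i_n} = \{k_n\}$.

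The final step is to combine concentration with a local limit estimate. The proof of Theorem~\ref{main_gen_case}(2) shows that $\wh\mu_{\ov i}(\wh X_{B_{\ov i}}) = 1$ forces $Q_1^{(n)} = \sum_{w \in V_{n,i_n}} \mu_p(X_w^{(n)}) \to 1$, which with the singleton block reads $\mu_p(X_{k_n}^{(n)}) \to 1$. On the other hand $\mu_p(X_{k_n}^{(n)}) = \binom{n}{k_n} p^{k_n}(1-p)^{n-k_n}$ is a value of the $\mathrm{Binomial}(n,p)$ mass function, so by the local central limit theorem (or a Stirling estimate of its largest term)
$$
\mu_p(X_{k_n}^{(n)}) \leq \max_{0 \leq i \leq n} \binom{n}{i} p^i (1-p)^{n-i} = O\!\left(n^{-1/2}\right) \longrightarrow 0,
$$
contradicting $\mu_p(X_{k_n}^{(n)}) \to 1$. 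Therefore $\mathcal{L} = \emptyset$. The main obstacle is the bridge between the purely combinatorial chain $\ov i$ and the analytic measure $\mu_p$: one must be certain that any partition realizing the framework for $B_p$ is forced into singleton blocks (the $(d1)$ computation above) and that the extension is a genuine non-atomic $\mu_p$ rather than one of the excluded degenerate boundary paths $k_n \equiv 0$ or $k_n \equiv n$; once these are in place the local limit bound, expressing that no single (or bounded) family of towers can retain the full mass of a $\mu_p$, closes the argument.
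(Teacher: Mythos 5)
Your first step --- using condition $(d1)$ and the explicit entries of $F_n$ in~(\ref{Pascal_stochastic}) to force every nonzero block $V_{n,j}$, $j\geq 1$, to be a singleton --- is correct and is exactly the paper's first step (the paper only records the cruder bound $d^*(\ov f_j^{(n)},\ov f_{j'}^{(n)})\geq 1/2$, but your sharper formula $\frac{n+|n-2k|}{n+1}$ is right). The gap is in the bridge you yourself flag at the end: the implication ``chain $\Rightarrow$ the extension is some non-atomic $\mu_p$'' is not justified, and in fact it is never true here. Once the blocks are singletons, the Pascal incidence entries are all $0$ or $1$, so \emph{every} chain subdiagram $B_{\ov i}$ has at most one infinite path; the only probability measure it carries is a Dirac mass, which is atomic and excluded from $\mathcal{M}_1(B)$, so Theorem~\ref{main_gen_case}(1)--(2) gives you nothing to extend and certainly not a Bernoulli measure. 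Your local-limit bound $\binom{n}{k_n}p^{k_n}(1-p)^{n-k_n}=O(n^{-1/2})$ is correct, but it is being applied to a hypothesis ($\wh\mu_{\ov i}=\mu_p$ concentrated on single towers) that never materializes; in particular the boundary chains $k_n\equiv 0$ and $k_n\equiv n$, which are the only ones surviving condition $(c1)$, are exactly the ones your argument cannot touch. Acknowledging the obstacle does not close it.

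There are two ways to finish. The paper's route is purely combinatorial: after the singleton step, condition $(c1)$ applied to a vertex $s$ with $1\leq s\leq n$ forces both $s-1$ and $s$ into the same level-$n$ block (since $f^{(n)}_{s,s-1}=\frac{s}{n+1}$ is not summable and $f^{(n)}_{s,s}=1-\frac{s}{n+1}\not\to 0$), contradicting singletons; hence $V_{n,0}\supset\{1,\dots,n-1\}$, and then $(e1.1)$ fails because $f^{(n)}_{j,j-1}+f^{(n)}_{j,j}=1$ for $2\leq j\leq n-1$. Alternatively, your measure-theoretic idea can be repaired by running the implication in the other direction: invoke part $(3)$ of Theorem~\ref{main_gen_case}, which asserts that the normalized extensions $\wh\mu_{\ov i}$ exhaust \emph{all} ergodic probability measures; then a fixed $\mu_p$ must equal some $\wh\mu_{\ov i}$, so $\mu_p(X^{(n)}_{k_n})\to 1$ along that chain's singleton blocks, and now your $O(n^{-1/2})$ bound does give a contradiction regardless of which chain (boundary or not) is involved. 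As written, however, the proof has a genuine hole at precisely the cases that remain after the $(d1)$ reduction.
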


\begin{proof}
Assume that there exists a sequence of partitions $\{V_{n,0}, V_{n,i}, i = 1,
\ldots,l_n\}$ of $V_n$ satisfying conditions $(c1),(d1),(e1)$ defined in
Subsection~\ref{main_thm_inverse}. Recall that we denote by
$\ov f_j^{(n)}$ the vector  $(f_{ji}^{(n)})$, $i \in V_n$.
It is easy to see that~(\ref{Pascal_stochastic}) implies
$$
d^{*}(\ov f_j^{(n)}, \ov f_{j'}^{(n)}) = \sum_{i = 0}^{n} |f_{ji}^{(n)} -
f_{j'i}^{(n)}| \geq \frac{1}{2}
$$
whenever $j \neq j'$, $j,j' \in V_{n+1}$.
Thus,  it follows from condition $(d1)$ that $V_{n,i}$ is a single point in
the set  $V_n$ whenever $i = 1, \ldots, l_n$.

Suppose that $1 \leq s \leq n$ is an element of $V_{n+1}$ such that $s \in
L_{n+1}$. By condition $(c1)$, there exists a unique element $r = r(s) \in
\{1,\ldots,l_n\}$ such that
\begin{equation}\label{6.4}
\sum_{n = 1}^{\infty}\sum_{u \notin V_{n,r}} f_{su}^{(n)} < \infty.
\end{equation}

Conditions (\ref{Pascal_stochastic}) and (\ref{6.4}) imply that $s-1, s \in
V_{n,r}$. Hence, $V_{n,r}$ contains at least two elements of $V_n$ for
 some $r \in L_n$, and we get a contradiction.

Therefore, we have $V_{n,0} \supset \{1,\ldots, n-1\}$.
Then, by $(e1.1)$, we have
$$
\sum_{i = 1}^{n - 1} f_{j,i}^{(n)} \rightarrow 0
$$
for every $j = 1,\ldots, n-1$. But this fact
 contradicts~(\ref{Pascal_stochastic}).
Thus, we have shown that there is no sequence of partitions $\{V_{n,0},
V_{n,i}, i = 1,\ldots,l_n\}$ of $V_n$ satisfying conditions $(c1),(d1),(e1)$.
\end{proof}

\subsection{A class of Bratteli diagrams with countably many ergodic
invariant measures}\label{subsection_cntbl_erg_meas}
 In this subsection, we present a class of
 Bratteli diagrams with countably infinite set of ergodic invariant measures.

To construct such diagrams, we  let $V_n = \{0, 1, \ldots, n\}$ for
$n = 0, 1, \ldots$, and let
$\{a_n\}_{n = 0}^{\infty}$ be a sequence of natural numbers such that
\begin{equation}\label{property1}
\sum_{n = 0}^{\infty} \frac{n}{a_n + n} < \infty.
\end{equation}
To define the edge set $r^{-1}(w)$ for every vertex $w$, we
use the following procedure.
 For $w\in V_{n+1}$ such that $w\neq n
+1$, the set $r^{-1}(w)$ consists of $a_n$ (vertical) edges connecting
$w \in V_{n+1}$ with the vertex $w \in V_n$ and by one edge connecting
 $w \in V_{n+1}$ with every vertex $u \in V_n$, $u \neq w$.
For $w=n+1$,  let $r^{-1}(w)$ contain $a_n$ edges connecting $w$
 with the vertex $n$ on level $V_n$ and by one edge connecting $w$ with
  all other vertices $u = 0,1, \ldots, n-1$ of $V_n$. Then
 $$
 |r^{-1}(w)| = a_n + n
 $$
for every $w \in V_{n+1}$ and every $n = 0,1, \ldots$.

  We observe that the Bratteli diagram defined above admits
 an order generating the Bratteli-Vershik homeomorphism, see
  \cite{HermanPutnamSkau1992}, \cite{GiordanoPutnamSkau1995}, or
\cite{BezuglyiKwiatkowskiYassawi2014}, \cite{BezuglyiKarpel2016} for
futher details. 
 In particular, we can use a so called \textit{consecutive} ordering. To 
 introduce a consecutive ordering, we define a linear 
 order $\leq$ on every set $r^{-1}(w)$, $w \in V_{n+1}$, $n  \geq 0$ in 
 such a way that the edges from $r^{-1}(w)$ are enumerated from left to 
 right as they appear in the diagram. Then, for every $e_1, e_2, e_3 \in r^{-1}(w)$ such that $e_1 \leq e_2 \leq e_3$ and $s(e_1) = s(e_3) = u$, 
 we have $s(e_2) = u$; and this order is consecutive by definition (see e.g.  \cite{Durand2010}). 
In particular, we will obtain that the minimal edge is  always  some edge 
between $w$ and vertex $0 \in V_{n}$ and the maximal edge is some edge 
between $w$  and the vertex $n \in V_n$.
  Then, it is easy to see that $X_B$ has the unique
 minimal infinite path passing through the vertices $0 \in V_n$, $n \geq 0$
 and the unique maximal infinite path passing through the vertices $n \in V_n
 $, $n \geq 0$. Thus, a Vershik map $\varphi_B \colon X_B \rightarrow
 X_B$ exists and it is minimal. Figure 1 below shows an example of such a 
 Bratteli  diagram. It is known that all minimal Bratteli-Vershik systems with a 
 consecutive ordering have entropy zero (see e.g. \cite{Durand2010}) hence 
 the system that we describe in this subsection has zero entropy.
 
\begin{figure}[ht]
\unitlength = 0.4cm
\begin{center}
\begin{graph}(28,14)
\graphnodesize{0.4}
\roundnode{V0}(1,13.5)
\roundnode{V11}(1,9)
\roundnode{V12}(10,9)

\bow{V11}{V0}{0.18}
\bow{V11}{V0}{-0.21}
\bow{V12}{V0}{0.07}
\bow{V12}{V0}{-0.06}
\freetext(1.1,11){$\cdots$}
\freetext(5.6,11){$\cdots$}

\roundnode{V21}(1,4.5)
\roundnode{V22}(10,4.5)
\roundnode{V23}(19,4.5)

\bow{V21}{V11}{0.18}
\bow{V21}{V11}{-0.21}
\edge{V22}{V11}
\edge{V21}{V12}
\bow{V22}{V12}{0.18}
\bow{V22}{V12}{-0.21}
\edge{V23}{V11}
\bow{V23}{V12}{0.06}
\bow{V23}{V12}{-0.07}
\freetext(1.1,6){$\cdots$}
\freetext(10.1,6){$\cdots$}
\freetext(16,6){$\cdots$}

\roundnode{V31}(1,0)
\roundnode{V32}(10,0)
\roundnode{V33}(19,0)
\roundnode{V34}(28,0)

\bow{V31}{V21}{0.18}
\bow{V31}{V21}{-0.21}
\freetext(1.1,1.5){$\cdots$}
\edge{V31}{V22}
\edge{V31}{V23}

\edge{V32}{V21}
\bow{V32}{V22}{0.18}
\freetext(10.1,1.5){$\cdots$}
\bow{V32}{V22}{-0.21}
\edge{V32}{V23}

\edge{V33}{V21}
\bow{V33}{V23}{0.18}
\freetext(19.1,1.5){$\cdots$}
\bow{V33}{V23}{-0.21}
\edge{V33}{V22}

\edge{V34}{V21}
\bow{V34}{V23}{0.06}
\freetext(24.9,1.5){$\cdots$}
\bow{V34}{V23}{-0.06}
\edge{V34}{V22}

\freetext(1,-1){$\vdots$}
\freetext(10,-1){$\vdots$}
\freetext(19,-1){$\vdots$}
\freetext(28,-1){$\vdots$}

\freetext(10,-2.5){Figure 1}

\end{graph}
\end{center}

\vspace{0.8 cm}
\end{figure}

Denote by $B_i = (W^{(i)}, E^{(i)})$, $i = 0,1,\ldots, \infty$, the
 subdiagrams of $B$ determined by the following sequences of vertices
 (taken consecutively from $V_0$, $V_1$, $\ldots$): for $B_0$,
 $W^{(0)} = (0,0,0, \ldots)$; for $B_i$,   $W^{(i)} = (0,1,\ldots,i-1,i,i,i
 \ldots)$ for $i = 1,2,\ldots$,  and for $B_{\infty}$,  $W^{(\infty)} =(0,1,2,
  \ldots)$. Then each $B_i$ is an odometer and $E^{(i)}$ is the set of all
  edges from $B$ that belong to $B_i$.

Let $\mu_i$ be the unique invariant (hence ergodic) probability measure on
 the odometer $B_i$. Then, by~(\ref{property1}), each measure $\mu_i$
  can
 be extended to a finite invariant measure $\wh{\mu}_i$ on the diagram $B$
 and it is supported by the set $\wh{X}_{B_i}$. The problem about
 finiteness of measure extension was discussed in detail in the papers
 \cite{BezuglyiKwiatkowskiMedynetsSolomyak2013},
 \cite{AdamskaBezuglyiKarpelKwiatkowski2016}. We use the same symbol
 $ \wh{\mu}_i$ to denote the normalized (probability)
measure obtained from the extension of $\mu_i$ for $i = 0,1,\ldots,\infty$.

\begin{prop}\label{countbl_erg_meas}
The measures $\wh{\mu}_i$, $i = 0, 1, \ldots, \infty$, form a set of all
ergodic probability invariant measures on the Bratteli diagram $B = (V,E)$
defined above.
\end{prop}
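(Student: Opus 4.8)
The plan is to recognize $B$ as a diagram of the type introduced in Subsection~\ref{main_thm_inverse} and apply Theorem~\ref{main_gen_case}, so that the ergodic measures become exactly the normalized extensions coming from the chains in $\mathcal{L}$. First I would record the stochastic incidence matrices. Since $|r^{-1}(w)| = a_n + n$ for every $w \in V_{n+1}$, the diagram has the ERS property with $r_n = a_n + n$; hence $h_w^{(n)} = \prod_{k=0}^{n-1}(a_k+k)$ for all $w \in V_n$ and $f_{vw}^{(n)} = \tl f_{vw}^{(n)}/(a_n+n)$ by Example~\ref{ExampleERS1}. A direct reading of the edge rule gives, for $v \leq n$, that $f_{vv}^{(n)} = a_n/(a_n+n)$ and $f_{vw}^{(n)} = 1/(a_n+n)$ for $w \neq v$, while for the top vertex $v = n+1$ one has $f_{n+1,n}^{(n)} = a_n/(a_n+n)$ and $f_{n+1,w}^{(n)} = 1/(a_n+n)$ for $w < n$.

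Next I would set up the partition by taking every block to be a singleton: put $V_{n,0} = \emptyset$ and let the good blocks be $\{v\}$, $v \in V_n = \{0,\ldots,n\}$. I declare the parent of the level-$(n+1)$ block $\{v\}$ to be $\{v\}$ on level $n$ when $v \leq n$, and $\{n\}$ when $v = n+1$; equivalently $L_{n+1}^{(i)} = \{i\}$ for $i < n$ and $L_{n+1}^{(n)} = \{n, n+1\}$. With this choice conditions $(d1)$ and $(e1)$ are immediate: $(d1)$ holds because each block is a single point, so its inner maximum is $0$, and $(e1)$ is vacuous since $V_{n,0} = \emptyset$. For $(c1)$ the matrix entries above show that for every block $\{i\}$ and every child $v \in V_{n+1}^{(i)}$ one has $\sum_{w \neq i} f_{vw}^{(n)} = 1 - f_{vi}^{(n)} = n/(a_n+n)$; therefore the series in $(c1)$ equals $\sum_n n/(a_n+n)$, which converges by~(\ref{property1}).

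Having verified $(c1),(d1),(e1)$, Theorem~\ref{main_gen_case} applies: every subdiagram $B_{\ov i}$, $\ov i \in \mathcal{L}$, is uniquely ergodic, each invariant measure on it extends to a finite measure on $B$, and, after normalization, these extensions form precisely the set of ergodic probability invariant measures on $B$. It then remains to identify $\mathcal{L}$ with $\{B_0,B_1,\ldots,B_\infty\}$. Unwinding the parent-child rule: the chain that ever selects $\{0\}$ is $(0,0,\ldots)$ and yields $B_0$; a chain following the diagonal $(1,2,\ldots,k)$ and then committing to $\{k\}$ yields $B_k$; and the chain $(1,2,3,\ldots)$ that always takes the new frontier block $\{n+1\}$ yields $B_\infty$. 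Thus $\mathcal{L} = \{B_0,B_1,\ldots,B_\infty\}$, and the asserted list of measures is complete.

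The one point requiring care, and what I expect to be the main obstacle, is the branching of the partition at the ``frontier'' vertex $n$ on level $n$: this single block is the common parent of both $\{n\}$ and $\{n+1\}$ on level $n+1$, and it is exactly this splitting that generates the countable family of chains (one committing at each corner, together with the non-committing diagonal chain giving $B_\infty$). One must confirm that the parent assignment is consistent (each level-$(n+1)$ block has a unique parent) and that the resulting chains biject with the odometers $B_i$. Once this singleton-block bookkeeping is arranged correctly, the only analytic input is the convergence $\sum_n n/(a_n+n) < \infty$ supplied by~(\ref{property1}).
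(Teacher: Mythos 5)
Your proof is correct, and it reaches the conclusion by a genuinely different route from the paper's own argument. You verify that the diagram fits the chain framework of Subsection~\ref{main_thm_inverse} --- singleton blocks $V_{n,i}=\{i\}$, empty exceptional set $V_{n,0}$, and the parent map that is the identity off the frontier and sends both $\{n\}$ and $\{n+1\}$ on level $n+1$ to $\{n\}$ on level $n$ --- check that $(c1)$ reduces exactly to $\sum_n n/(a_n+n)<\infty$, i.e.\ (\ref{property1}), note that $(d1)$ is trivial for singleton blocks and $(e1)$ is vacuous, and then invoke Theorem~\ref{main_gen_case} as a black box; the enumeration of chains (commit at some corner $k$, or never commit) recovers exactly $B_0,B_1,\ldots,B_\infty$. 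The paper instead argues largely by hand: it computes the products $G_{(n+m,n)}$, shows via (\ref{property2}) that the extreme points of $\Delta_m^{(n)}$ lie within $2(1-C_n)$ of the standard basis vectors so that $\Delta_{\infty}^{(n)}$ has exactly $n+1$ extreme points, and then classifies the admissible sequences $(w_n)$ of extreme points under $F_n^T$, which is where the list $\ov w_0,\ov w_1,\ldots,\ov w_{\infty}$ appears. (The paper does lean on the method of Theorem~\ref{main_gen_case} and Lemma~\ref{prop4.3} for the unique-ergodicity and lower-bound steps, so the two arguments are close relatives.) Your version is shorter and exhibits the example as a direct instance of the general theorem, which is arguably its intended role; the paper's version is self-contained and yields extra quantitative geometry of $\Delta_{\infty}^{(n)}$, namely the separation $2C_n-1$ between distinct extreme points. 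The one point worth making explicit in your write-up is that distinct chains give rise to distinct (indeed mutually singular) measures --- the path spaces $X_{B_{\ov i}}$ are pairwise disjoint because distinct chains eventually disagree at every level --- so the resulting list contains no repetitions.
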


\begin{proof}
In the same way as in Theorem~\ref{main_gen_case}, one can show that
each $\wh{\mu}_i$ is the unique invariant measure on the set
$\wh{X}_{B_i}$ therefore $\wh{\mu}_i$, $i = 0,1,\ldots,\infty$, is a
probability ergodic measure on $B$. Thus, the proof will be complete
if we show that any ergodic invariant probability measure $\mu$ on $B$
 coincides
with one of the measures $\wh{\mu}_i$, $i = 0,1,\ldots,\infty$.
The incidence matrices $\tl F_n$ of $B$ have the following form
$$
\tl F_n =
\begin{pmatrix}
a_n & 1 & 1 & \ldots & 1 & 1\\
1 & a_n & 1 & \ldots & 1 & 1\\
\vdots & \vdots & \vdots & \ddots & \vdots & \vdots \\
1 & 1 & 1 & \ldots & 1 & a_n\\
1 & 1 & 1 & \ldots & 1 & a_n\\
\end{pmatrix}
$$
for $n = 1,2,\ldots$
Therefore we see that
$$
\sum_{w = 0}^n \tl f_{vw}^{(n)} = a_n + n, \qquad v = 0,1,\ldots, n + 1,
$$
and $B$ is an $ERS$ Bratteli diagram with $r_n = a_n + n$ for $n \geq 0$.
Hence,
$$
h_v^{(n+1)} = a_0 (a_1 + 1) \cdots (a_n + n), \ \ v = 0,1,\ldots, n+1,
n\geq 0,
$$
 and
$$
\frac{h_w^{(n)}}{h_v^{(n + 1)}} = \frac{1}{a_n + n}
$$
where $w \in V_n$, $v \in V_{n+1}$ and $n = 1,2,\ldots$.
Furthermore, the entries of the stochastic matrices $F_n, n \geq 0,$ are
$$
f_{vw}^{(n)} = \frac{\tl f_{vw}^{(n)}}{a_n + n} =
\left\{
\begin{aligned}
\frac{a_n}{a_n + n} &, \mbox{ if } v = w, w \in \{0,1,\ldots, n\},
 \mbox{ or } v = n + 1, w = n,\\
\frac{1}{a_n + n} &, \mbox{ otherwise}.\\
\end{aligned}
\right.
$$

Consider the matrices $G_{(n+m,n)} = F_{n+m}\cdots F_n$ for $n \geq 1$
and $m \geq 1$. Using~(\ref{property1}) and repeating the same arguments
as in Lemma~\ref{prop4.3}, we get
\begin{equation}\label{property2}
g_{vw}^{(n+m,n)} \geq \prod_{s = n}^{\infty} \frac{a_s}{a_s + s} =
C_n \to 1 \mbox{ as } n\rightarrow \infty,
\end{equation}
in the case when $v = w$, $w \in \{0,1,\ldots, n\}$,
 or in the case when $v = n + 1, \ldots, n + m + 1$, $w = n$ and
 $m = 1,2, \ldots$

We recall that the convex polytope $\Delta_m^{(n)}$ is spanned by the
probability vectors $\ov y_v^{(n+m,n)} =
(g_{vw}^{(n+m,n)})_{w = 0}^{n}$, $v = 0,1,\ldots, n+m+1$.
Inequalities~(\ref{property2}) imply that
$$
d^{*}(\ov y_v^{(n+m,n)}, \ov e_v^{(n+1)}) \leq 2 (1 - C_n), \ \ \
v = 0,1,\ldots, n
$$
and
$$
d^{*}(\ov y_v^{(n+m,n)}, \ov e_n^{(n+1)}) \leq 2 (1 - C_n), \ \ \
 v = n + 1,\ldots, n + m + 1
$$
where $m = 1,2,\ldots$
Therefore the set $\Delta_{\infty}^{(n)}$ has exactly $(n + 1)$ extreme points
$\ov q_v^{(n)}$, where
$$
\ov  q_v^{(n)} = \lim_{m \rightarrow \infty} \ov y_v^{(n+m,n)}.
$$
Moreover,
$$
d^{*}(\ov q_v^{(n)}, \ov e_v^{(n+1)}) \leq 2 (1 - C_n).
$$
In fact, $\Delta_{\infty}^{(n)}$ is a simplex in $\mathbb{R}^{n + 1}$ for
$n$ large enough.
Further, we have
$$
\left\{
\begin{aligned}\label{property3}
\ov  q_v^{(n)} &= F_n^T(\ov  q_v^{(n+1)}) \mbox{ for } v = 0,1,\ldots, n,
\\
\ov  q_n^{(n)} &= F_n^T(\ov  q_{n+1}^{(n+1)}).
\end{aligned}
\right.
$$
Observe that
\begin{equation}\label{property4}
d^*(\ov  q_v^{(n)},\ov  q_w^{(n)}) \geq (2C_n - 1) \to 1
\mbox{ as } n \rightarrow \infty
\end{equation}
whenever $v \neq w$, $v, w \in \{0,1,\ldots, n\}$.

Let $\mu$ be an ergodic invariant probability measure on $B = (V,E)$,
 and let
$$
\mu(X_w^{(n)}) = q_w^{(n)}, \ \ \ w = 0,1,\ldots, n.
$$
Denote $\ov q^{(n)} = (q_w^{(n)})_{w = 0}^{n}$.
Then $\ov q^{(n)}$ is an extreme point of $\Delta_{\infty}^{(n)}$, i.e. $\ov q^{(n)}
= \ov q^{(n)}_{w_n}$ for some $0 \leq w_n \leq n$ and each $n = 1,2,
\ldots$ Moreover, we have
$$
\ov q^{(n)}_{w_n} = F_n^T(\ov q^{(n)}_{w_{n+1}}).
$$
Then (except for some  initial part) the
 sequence $(w_0, w_1, w_2, \ldots)$ is one of the following sequences:
$\ov w_0 = (0,0,0,\ldots)$, $\ov w_1 = (0,1,1,\ldots)$, $\ov w_2 =
(0,1,2,2,\ldots)$, $\ldots$, $\ov w_{\infty} = (0,1,2,3, \ldots)$. Assume
that $w_n = 0$ for every $n = 0,1,\ldots$ Then
$$
\mu\left(X_0^{(n)}\right) = q_0^{(n)} \geq C_n \nearrow 1 \mbox{ as } n
\rightarrow \infty
$$ and
$$
\mu\left(\bigcup_{w \neq 0} X_w^{(n)}\right) \leq (1 - C_n) \searrow 0
\mbox{ as } n \rightarrow \infty.
$$
The above inequalities imply that $\mu(\wh X_{B_0}) = 1$ hence $\mu =
\wh \mu_0$. In the same way we show that $\mu = \wh \mu_i$, $i = 1,2,
\ldots, \infty$ if $(w_0, w_1, w_2, \ldots) = \ov w^{(i)}$ for $i = 1,2,
\ldots, \infty$.
\end{proof}

\section{Interpretation of the main theorems in terms of symbolic dynamics}\label{Section7}

Theorems~\ref{main1} and \ref{main_gen_case} can be interpreted in terms
of symbolic dynamics. In order to define a dynamical system
(which is known by the name of ``Vershik map'') on a
  Bratteli diagram, one needs to use a partial order on the set of edges.
The reader can find  details about the definition of dynamical systems
 on Bratteli diagrams  in \cite{HermanPutnamSkau1992},
\cite{GiordanoPutnamSkau1995}, or in \cite{Durand2010},
 \cite{BezuglyiKarpel2016}. More advanced study of ordered  Bratteli
 diagrams and the existence of corresponding dynamics are discussed in
 \cite{BezuglyiKwiatkowskiYassawi2014} and \cite{BezuglyiYassawi2017}.

For simplicity, throughout this section we will consider only simple properly
ordered Bratteli diagrams of finite rank which correspond to Cantor minimal
systems of topological rank $K > 1$. Recall that a Cantor minimal system has topological rank $K$ if it admits a Bratteli-Vershik representation with $|V_i| = K$ for every $i \geq 1$ and $K$ is the smallest such integer. If a Cantor minimal system has topological rank $K$ then it has at most $K$ probability ergodic invariant measures (see e.g. \cite{Durand2010}). If a system has $K$ probability ergodic invariant measures then the rank of the corresponding Bratteli diagram is at least $K$ (see \cite{BezuglyiKwiatkowskiMedynetsSolomyak2013}).

In \cite{DownarowiczMaass2008}, T. Downarowicz and A. Maass
proved that every Cantor minimal system of topological finite rank $K > 1$ is
expansive. This result was generalized in
\cite{BezuglyiKwiatkowskiMedynets2009} to aperiodic Cantor dynamical
systems of topological finite rank. Due to Hedlund~\cite{Hedlund1969},
every expansive Cantor dynamical system is conjugate to a subshift. Our
goal is to describe explicitly, how to code a Vershik map on a simple properly
ordered Bratteli diagram to obtain a conjugacy with a minimal subshift.

\subsection{From Bratteli diagrams to subshifts}
Let $\varphi_B \colon X_B \rightarrow X_B$ be a Vershik map on a simple
properly ordered Bratteli diagram $B$. Set $s_n = \sum_{w \in V_n}
h_{w}^{(n)}$ and $S_n = \{0, \ldots, s_n - 1\}$. Let $\{\ov e_0, \ldots,
 \ov e_{s_n - 1}\}$ be a set of all
finite paths between $v_0$ and vertices from $V_n$. Denote by $X^{(n)}$
the partition of $X_B$ into the corresponding cylinder sets $\{X^{(n)}(\ov
e_s)\}_{s = 0}^{s_n - 1}$.
 Then
\begin{equation}\label{nfactorsubshift}
(\psi_n(x))_i = s \Leftrightarrow \varphi_B^i(x) \in X^{(n)}(\ov e_s),
 \quad s\in S_n,\ i \in \mathbb{Z}.
\end{equation}
 is a factor map $\psi_n \colon X_B \rightarrow S_n^{\mathbb{Z}}$.
 Denote $Y_n = \psi_n(X_B)$,
and let $\sigma$ be the shift map. Then $(Y_n, \sigma)$ is a subshift of the
full shift $(S_n^{\mathbb{Z}}, \sigma)$. We note also that since the
sequence of  partitions
$X^{(n)}$ is nested, the system $(Y_n, \sigma)$ is a factor of the system
$(Y_{n+1}, \sigma)$ for every $n$. Moreover, $(X_B, \varphi_B)$ is
conjugate  to
the inverse limit of the systems $(Y_n, \sigma)$ for any ordered Bratteli
diagram that admits a continuous Vershik map. If $X^{(n)}$ is a generating
partition, then $\psi_n$ is a conjugacy  between $(X_B, \varphi_B)$ and
$(Y_n, \sigma)$.

Let $(X_B, \varphi_B)$ have topological finite rank $K > 1$, and let
$\delta$ be the expansivity constant for $(X_B, \varphi_B)$. Then, by
\cite{Hedlund1969}, every partition of $X_B$ into  clopen sets with
diameter less than $\delta$ is a generating partition. Hence, there exists a
natural number $n_0$ such that for all $n \geq n_0$ the systems $(X_B,
\varphi_B)$ and $(Y_n, \sigma)$ are conjugate.

\begin{remar}
In general, the rank of the diagram may be bigger than the rank of the
corresponding dynamical system. In \cite{FrickPetersenShields2017}, the
following criterion was given for an ordered Bratteli diagram to be isomorphic
to an odometer. It is said that the level $V_n$ of an ordered Bratteli
diagram is\textit{ uniformly ordered} if there exists a word $\omega$ over
the alphabet $V_{n-1}$ such that the coding of each vertex from $V_n$ in
terms of  $V_{n-1}$ is a concatenation of $\omega$.
\end{remar}

\begin{thm}[\cite{FrickPetersenShields2017}]
A simple properly ordered Bratteli diagram is topologically conjugate to an
odometer if and only if it has a telescoping for which there are infinitely many
uniformly ordered levels.
\end{thm}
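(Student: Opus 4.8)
The plan is to prove the two implications separately, using the inverse-limit description $(X_B,\varphi_B)=\varprojlim(Y_n,\sigma)$ from~\eqref{nfactorsubshift} together with the standard fact that a minimal Cantor system is conjugate to an odometer if and only if it is equicontinuous, equivalently if and only if each of its subshift factors is finite. Recall that, after telescoping to a subsequence of levels, saying that $V_{n}$ is \emph{uniformly ordered} means that the word $\omega=\omega_n$ over $V_{n-1}$ obtained by reading, in the prescribed order, the sources of the edges entering a vertex is the same for every vertex of $V_n$, and that the full coding of each $v\in V_n$ equals $\omega^{k_v}$ for some $k_v\ge 1$.

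For the \emph{``if''} direction, I would first telescope $B$ to the uniformly ordered levels, so that every level is uniformly ordered; by simplicity we may also assume that each word $\omega_n$ involves every vertex of $V_{n-1}$. The geometric meaning of uniform ordering of $V_{n+1}$ is that the Kakutani--Rokhlin tower over each $v\in V_{n+1}$ is obtained by stacking $k_v$ identical copies of the block coded by $\omega_{n+1}$. Consequently the level-$n$ towers assemble into a \emph{single} Rokhlin tower of height $H_n:=|\omega_{n+1}|$ (the $\ov h^{(n)}$-weighted length of $\omega_{n+1}$), on which $\varphi_B$ sends each floor to the one above it and the top floor to the bottom floor of the next copy. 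Since $h_v^{(n+1)}=k_v H_n$ for every $v\in V_{n+1}$, we obtain $H_n\mid H_{n+1}$, so the merged single towers form a nested sequence with heights $H_1\mid H_2\mid\cdots$ on which $\varphi_B$ is precisely ``add one with carry'', i.e. $(X_B,\varphi_B)$ is the odometer $\varprojlim \mathbb{Z}/H_m\mathbb{Z}$.

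For the \emph{``only if''} direction, assume $(X_B,\varphi_B)$ is conjugate to an odometer, hence equicontinuous. Each $\psi_n$ is a continuous factor map onto the subshift $(Y_n,\sigma)$, and a factor of an equicontinuous system is equicontinuous; since a subshift is expansive and a minimal equicontinuous expansive system is a single periodic orbit, every $(Y_n,\sigma)$ is finite. Finiteness of $(Y_n,\sigma)$ produces a clopen partition into finitely many cyclically permuted pieces refining $X^{(n)}$; equivalently it forces $\varphi_B$ to carry the top floor of each level-$n$ tower onto the bottom floor of a \emph{uniquely} determined level-$n$ tower, chaining all of them into a single cycle. I would then telescope so that this single-cycle pattern at level $n$ is recorded as a coding of the vertices of the next level by one repeated word, thereby producing uniformly ordered levels; as this can be arranged cofinally, $B$ has a telescoping with infinitely many uniformly ordered levels.

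The main obstacle is the combinatorial bookkeeping in the \emph{``only if''} part: translating the purely dynamical conclusion ``$\varphi_B$ cyclically permutes the atoms of $X^{(n)}$'' into the exact combinatorial statement that, at a telescoped level, the edge orderings are codings of the form $\omega^{k_v}$ with one common word $\omega$. One must check, using minimality so that every atom is visited, that the period of the single periodic orbit $Y_n$ matches the relevant tower height, match the resulting cyclic arrangement of level-$n$ towers with the proper order on $E$, and verify that a suitable telescoping upgrades ``common cyclic successor for all towers'' to ``common coding word for all vertices''. The remaining ingredients---the inverse-limit identity, the divisibility $H_n\mid H_{n+1}$, and the equicontinuous-plus-expansive dichotomy---are routine once this translation is in place.
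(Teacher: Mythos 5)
The paper does not prove this statement; it is quoted from \cite{FrickPetersenShields2017} as background for Section 7, so there is no in-paper argument to compare yours against, and I can only assess your proposal on its own terms. Your ``if'' direction is essentially sound: after telescoping to the uniformly ordered levels (uniform ordering survives telescoping, since the composed substitution sends $\omega^{k_v}$ to $\tilde\omega^{k_v}$), the union $A_n$ of the floors at heights $0,H_n,2H_n,\ldots$ inside each level-$(n+1)$ tower has constant return time $H_n$, the resulting cyclic clopen partitions are nested with $H_n\mid H_{n+1}$, and each floor of the $H_n$-tower lies inside a single atom of $X^{(n)}$, so these partitions generate the topology and $(X_B,\varphi_B)$ is the odometer $\varprojlim\mathbb{Z}/H_n\mathbb{Z}$. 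The details you wave at there (nestedness of the bases, generation) are genuinely routine.

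The ``only if'' direction, however, contains a real gap, and the intermediate claim you lean on is false as stated. Finiteness of $(Y_n,\sigma)$ gives a clopen partition $\{C_0,\ldots,C_{P_n-1}\}$ cyclically permuted by $\varphi_B$ and refining $X^{(n)}$; equivalently, every orbit's itinerary through the level-$n$ towers is a shift of one periodic word $W^{\infty}$. It does \emph{not} follow that $\varphi_B$ carries the top floor of each level-$n$ tower onto the bottom floor of a uniquely determined tower: if the cyclic word $W$ contains a letter $w$ at two positions with different successors, the top floor of $X_w^{(n)}$ splits into clopen pieces mapping onto the bottoms of two different towers, which is perfectly compatible with $Y_n$ being a single periodic orbit. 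What finiteness actually gives, once $m$ is large enough that $X^{(m)}$ refines $\{C_i\}$, is that the coding of each $v\in V_m$ over $V_n$ is the subword of $W^{\infty}$ read for $h_v^{(m)}$ floors starting at the phase $i(v)$ of the atom $C_{i(v)}$ containing the bottom floor of $X_v^{(m)}$. Uniform ordering of the telescoped level then requires both that $P_n\mid h_v^{(m)}$ for every $v$ and that the phases $i(v)$ coincide for all $v\in V_m$. The second condition is the crux: different minimal paths into different vertices of $V_m$ may a priori lie in different atoms $C_i$, telescoping by itself does not alter the phases $i(v)$, and nothing you have established forces them to agree. This phase-alignment is precisely the nontrivial content of the Frick--Petersen--Shields theorem; you flag it as ``the main obstacle'' but then describe the remaining work as bookkeeping, which it is not, and the specific bridge you propose (unique successor of each top floor) is the step that fails.
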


Let $B = (V,E)$ be a simple properly ordered Bratteli diagram of topological
rank $K > 1$ such that the Vershik map $\varphi_B$ exists. We can assume
that $|V_n| = K$ for every $n \geq 1$. Choose $n_0$ such that $(X_B,
\varphi_B)$ is topologically conjugate to $(Y_n, \sigma)$ for all $n \geq
n_0$. To describe the symbolic shift $(Y_{n_0}, \sigma)$, we will need a
sequence $\{\mathcal{A}_n\}_{n \geq n_0}$ of families of blocks over
$S_{n_0}$. Each family $\mathcal{A}_n$ consists of $K$ blocks
$A_w^{(n)}$, $w \in V_n$. We define $\mathcal{A}_n$ inductively. First, to
define blocks $A_w^{(n_0)}$, $w \in V_{n_0}$, we write the set
$X_w^{(n_0)}$ in the form $\{\ov e_{i_1} < \ov e_{i_2} < \ldots < \ov
e_{i_w}\}$, where $\ov e_{i_1}, \ov e_{i_2}, \ldots, \ov e_{i_w}$ are all
paths between $v_0$ and $w$ which are compared with respect to the
lexicographical order. Set $A_w^{(n_0)} = (i_1, \ldots, i_w)$. Note that
$i_1, \ldots, i_w$ are symbols from $S_{n_0}$. Assume that $A_w^{(n)}$
are already defined for some $n \geq n_0$ and for all $w \in V_n$. Take $v
\in V_{n+1}$ and consider $r^{-1}(v) = \{e_1 < e_2 < \ldots < e_{|r^{-1}
(v)|}\}$. Let $w_i = s(e_i)$, $i = 1, \ldots, |r^{-1}(v)|$ be the vertices of
$V_n$ determined by the set $r^{-1}(v)$. Then we define a block $A_v^{(n
+1)}$ as the concatenation of the blocks $A_{w_1}^{(n)}, \ldots, A_{w_{|
r^{-1}(v)|}}^{(n)}$, i.e.

\begin{equation}\label{blockAvn}
A_v^{(n+1)} = A_{w_1}^{(n)} \ldots A_{w_{|r^{-1}(v)|}}^{(n)}.
\end{equation}

In this way we construct the sequence $\{\mathcal{A}_n\}_{n \geq
n_0}$. Now we define the language $L(\mathcal{A}_n)$ as the set of all
words which appear as factors of $A_w^{(n)}$ for $w \in V_n$, $n \geq
n_0$. Then we set
$$
Y_{n_0} = \{y \in S_{n_0}^{\mathbb{Z}} : y[-n,n] \in L(\{\mathcal{A}_n\})
\mbox{ for any } n \geq 1\}.
$$

\begin{remar}
We assume that $\tl f_{vw}^{(n)} \geq 1$ for each $v \in V_{n+1}$, $w \in
V_n$, and $n \geq 1$. This obviously  implies that $(X_B, \varphi_B)$ is
 minimal.
However, this does not guarantee that $B = (V,E)$ is proper, i.e. it has a
unique maximal path and a unique minimal path (of course, there exists a
 properly ordered
Bratteli diagram $B'$, possibly of different rank than $B$,  such that the Vershik maps $(X_B, \varphi_B)$
and $(X_{B'}, \varphi_{B'})$ are topologically conjugate, 
see \cite{HermanPutnamSkau1992} ).
\end{remar}

It follows from Proposition 3.2 in~\cite{BezuglyiKwiatkowskiYassawi2014},
that $B = (V,E)$ has the same number (say $k \leq K$) of minimal and
maximal paths. Denote them by $e^{(\min,i)}$ and $e^{(\max,i)}$
correspondingly, for $i = 1,\ldots, k$. There exists a permutation $\rho$ of
the set $\{1,\ldots, k\}$
such that $\rho(i) = j$ if and only if $\varphi_B(e^{(\max,i)}) = e^{(\min,j)}
$. Minimality of $(X_B, \varphi_B)$ implies that
$$
X_B = \overline{\{\varphi_B^s(e^{(\min,i)}), s \in \mathbb{Z}\}}
$$
and
$$
X_B = \overline{\{\varphi_B^s(e^{(\max,i)}), s \in \mathbb{Z}\}}
$$
for every $i = 1,\ldots,k$.
Thus the trajectories (with respect to the shift $\sigma$) of $y_{n_0}^{(i)}
= \psi_{n_0}(e^{(\min,i)})$ and $z_{n_0}^{(i)} = \psi_{n_0}(e^{(\max,i)})
$ are dense in $Y_{n_0}$ for $i = 1,\ldots,k$. To see how the sequences
$y_{n_0}^{(i)}$ and $z_{n_0}^{(i)}$ look, we write
$$
e^{(\min,i)} = (e^{(\min,i)}_1, e^{(\min,i)}_2, \ldots),
$$
$$
e^{(\max,i)} = (e^{(\max,i)}_1, e^{(\max,i)}_2, \ldots),
$$
and let $r(e^{(\min,i)}_s) = w_s^{(\min,i)}$, $r(e^{(\max,i)}_s) =
w_s^{(\max,i)}$ for $s = 1,2,\ldots$ Then $w_s^{(\min,i)}$ and
$w_s^{(\max,i)}$ are the consecutive vertices of the paths $e^{(\min,i)}$
and
$e^{(\max,i)}$, $i = 1,\ldots,k$ respectively. It follows from the definition of
$\psi_{n_0}$ (see~(\ref{nfactorsubshift})) that
$$
y_{n_0}^{(j)}\left[-h_s^{(\max,i)}, h_s^{(\min,j)} - 1\right] = A^{(s)}
_{w_s^{(\max,i)}}A^{(s)}_{w_s^{(\min,j)}}
$$
and
$$
z_{n_0}^{(i)} = \sigma^{(-1)}(y_{n_0}^{(j)}),
$$
where $j = \rho(i)$ and $h_s^{(\min,j)} = |A^{(s)}_{w_s^{(\min,j)}}|$,
$h_s^{(\max,j)} = |A^{(s)}_{w_s^{(\max,j)}}|$, $s = n_0, n_0 + 1, \ldots$

\begin{remar}
We can reformulate Theorems~\ref{main1} and \ref{main_gen_case} using
 the
families of blocks $A_w^{(n)}$, $w \in V_n$, $n \geq 1$. It follows from the
above considerations that the elements of the incidence matrices $\tl F_n$
are the numbers of occurrences of the blocks $A_w^{(n)}$ inside the blocks
$A_v^{(n+1)}$, $w \in V_n$, $v \in V_{n+1}$. More precisely, we have
$$
\tl f_{vw}^{(n)} = \mathrm{card}\left \{1 \leq i \leq |r^{-1}(v)|: A_{w_i}
^{(n)} = A_w^{(n)}\right\},
$$
where $A_{w_i}^{(n)}$ are the blocks appearing in the
concatenation~(\ref{blockAvn}). In this case we have $h_w^{(n)} = |
A_w^{(n)}|$, $w \in V_n$. Then Theorem~\ref{main1} reveals the structure
of the set of all invariant measures of $(Y_{n_0}, \sigma)$, while Theorem~\ref{main_gen_case} allows us to construct symbolic systems with
 uncountably many probability ergodic invariant measures. For more details about the connection between Bratteli diagrams and symbolic dynamical systems see e.g.~\cite{DurandHostSkau1999, BezuglyiKwiatkowskiMedynets2009, Durand2010, DownarowiczMaass2008, BezuglyiKarpel2014, DownarowiczKarpel2018, DownarowiczKarpel}.
\end{remar}

\subsection{Toeplitz flows}
The details about Toeplitz dynamical systems can be found in
\cite{Williams1984, DownarowiczKwiatkowskiLacroix1995,
BulatekKwiatkowski1990, Downarowicz1990}.
A \textit{Toeplitz dynamical system} is defined by a Toeplitz sequence
$\omega$
over a set of symbols $S$, $|S| \geq 2$. The sequence $\omega$ can be
obtained as a limit of periodic sequences $\omega_n$ over the alphabet $S
\cup \emptyset$, where $\emptyset$ denotes the empty symbol. Take a
sequence of natural numbers $\{\lambda_0, \lambda_1, \ldots\}$ such that
$\lambda_n \geq 2$ for all $n \geq 0$. Set $p_n = \lambda_0 \lambda_1
\cdots \lambda_n$, $n \geq 0$. By induction we define a sequence of blocks
$\{A_n, n \geq 0\}$ over the alphabet $S \cup \emptyset$ with $|A_n| =
p_n$. Each block $A_n$ should have some positions filled by the elements of
$S$ (these positions are called the filling positions) and some positions
occupied by the empty symbol $\emptyset$.
We take any block $A_0$ with $|A_0| = p_0 \geq 3$ such that $A_0[0]$
and $A_0[p_0-1]$ belong to $S$, and there is at least one symbol $
\emptyset$ among the symbols on the positions $\{1, \ldots, p_0 - 1\}$.
Assume that $A_n$ is defined. To define $A_{n+1}$, we first consider a
concatenation of $\lambda_{n+1}$ copies of $A_n$ and then fill some but
not all of the empty positions in this concatenation. Denote by $l_n$ and
$k_n$ the first and the last position in $A_n$, where the empty symbol $
\emptyset$ occurs. Then $l_n \leq k_n$ and $A_n[i] \in S$ whenever $0
\leq i \leq l_n - 1$ and $k_n +1 \leq i \leq p_n-1$. We require that $l_n$
and $(p_n - k_n)$ tend to infinity as $n$ tends to infinity.

For every $n$, we define a sequence $\omega_n$ as an infinite
concatenation of $A_n$ such that the block $A_n$ starts at a zero position.
Thus, each $\omega_n$ is a periodic two-sided sequence over $S \cup
\emptyset$ with the period $p_n$. Notice that the block $\omega_n[-k_n +
1, l_n - 1]$ is filled with the symbols from $S$. Now define a sequence $
\omega$ over the symbols $S$ in such a way that
\begin{equation}\label{def_omega}
\omega[-k_n + 1, l_n - 1] = \omega_n[-k_n + 1, l_n - 1]
\end{equation}
for $n = 0,1, \ldots$ Let us remark that $\omega$ is well defined since $
\omega_{n+1}[-k_n + 1, l_n - 1] = \omega_{n}[-k_n + 1, l_n - 1]$ and $l_n,
k_n \nearrow \infty$ as $n \rightarrow \infty$.

A sequence $\omega$ defined by~(\ref{def_omega}) is called a
 \textit{Toeplitz sequence }over $S$ whenever $\omega$ is not periodic.
 By a \textit{Toeplitz flow} we
mean a topological dynamical system $(\ov{\mathcal{O}(\omega)},
\sigma)$, where
$$
\ov{\mathcal{O}(\omega)} = \ov{\{\sigma^i(\omega), i \in \mathbb{Z}\}}
\subset S^{\mathbb{Z}}.
$$

 It is known how one can characterize Bratteli-Vershik systems associated to
  Toeplitz flows. The following theorem was proved  in  
  \cite{GjerdeJohansen2000}.
  
\begin{thm}
The family of expansive Bratteli-Vershik systems associated to simple Bratteli
diagrams with the ERS property coincides with the family of Toeplitz flows
up to conjugacy.
\end{thm}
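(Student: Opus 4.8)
The plan is to establish the two inclusions separately, using the classical characterization of Toeplitz flows as precisely the expansive minimal systems that are almost one-to-one extensions of an odometer (equivalently, whose maximal equicontinuous factor is an odometer); see \cite{GjerdeJohansen2000, DownarowiczKwiatkowskiLacroix1995}. The bridge between the two sides is the observation that, for a Bratteli diagram, the $ERS$ property is equivalent to the statement that all Kakutani-Rokhlin towers on a fixed level have the same height. Indeed, if $h^{(n)}_w = h^{(n)}$ does not depend on $w\in V_n$, then (\ref{formula for heights}) gives $h^{(n+1)} = h^{(n)}\sum_{w\in V_n}\tl f^{(n)}_{vw}$, so $\sum_{w}\tl f^{(n)}_{vw}$ is independent of $v$; conversely the $ERS$ property yields $h^{(n)}_w = r_0\cdots r_{n-1}$ as computed in Example~\ref{ExampleERS1}. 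I will write $p_{n-1} := r_0\cdots r_{n-1}$ for the common height on level $n$.

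First I would treat the direction from diagrams to flows. Let $B$ be a simple, properly ordered $ERS$ Bratteli diagram whose Vershik map $\varphi_B$ is expansive. Because all level-$n$ towers share the height $p_{n-1}$, the map $\pi_n\colon X_B\to \mathbb{Z}/p_{n-1}\mathbb{Z}$ sending a path $x$ to the floor it occupies in the level-$n$ partition is well defined and locally constant, and $\varphi_B$ raises this floor by one modulo $p_{n-1}$ everywhere (a top of any tower being sent to a base, and the global maximal path to the global minimal one). The compatibility $\pi_{n}\equiv \pi_{n+1}\pmod{p_{n-2}}$ lets these maps assemble into a factor map $\pi\colon (X_B,\varphi_B)\to(\mathbb{Z}_{(p_n)},+1)$ onto the odometer determined by $(p_n)$. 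Simplicity (minimality of $\mathcal E$) together with the uniqueness of the maximal and minimal paths makes $\pi$ almost one-to-one, the only non-singleton fibres sitting over the forward orbit of the images of the extremal paths. Since $\varphi_B$ is expansive, the coding $\psi_{n_0}$ of Section~\ref{Section7} is a conjugacy onto a subshift $(Y_{n_0},\sigma)$ for $n_0$ large, and an expansive almost one-to-one extension of an odometer is, by the cited characterization, a Toeplitz flow.

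For the reverse direction I would start from a Toeplitz sequence $\omega$ over $S$ built from blocks $A_n$ of length $p_n=\lambda_0\cdots\lambda_n$ as in (\ref{def_omega}). The periodic approximations $\omega_n$ give a nested sequence of clopen Kakutani-Rokhlin partitions of $\ov{\mathcal O(\omega)}$ in which every tower on level $n$ has height exactly $p_{n-1}$; the vertices of level $n$ are indexed by the distinct level-$n$ column names occurring in $\omega$, and the edges record how each level-$(n+1)$ name is read off as a concatenation of level-$n$ names, exactly as in (\ref{blockAvn}). Equal heights yield the $ERS$ property by the equivalence above, minimality of the Toeplitz flow gives simplicity, and ordering the edges according to the order of concatenation produces a proper order with a Vershik map that is, through these partitions, conjugate to $(\ov{\mathcal O(\omega)},\sigma)$; expansivity is inherited from $\sigma$. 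This realizes every Toeplitz flow as an expansive $ERS$ Bratteli-Vershik system. (Note that no finite-rank restriction is imposed here, so the diagrams produced may be of unbounded rank.)

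The main obstacle will be the almost one-to-one claim and its tight matching with the Toeplitz structure, rather than the combinatorial constructions themselves. In the forward direction one must verify that the odometer factor $\pi$ has singleton fibres off a meager, measure-zero set and, crucially, that the positions which become $p_n$-periodic under the coding are exactly those whose partition membership is already decided at level $n$; this is what turns an almost automorphic extension into a genuine Toeplitz sequence. In the backward direction one must check that the block names chosen as vertices are well defined and stable --- here the filling-position condition $l_n,\,p_n-k_n\to\infty$ in (\ref{def_omega}) is precisely what guarantees it --- and that no periodicity is accidentally created, so that the resulting diagram is genuinely simple and the coding injective. These regularity and matching verifications carry the real content of the equivalence.
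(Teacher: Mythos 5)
The paper does not actually prove this theorem: it is quoted from \cite{GjerdeJohansen2000}, and the text that follows only sketches the Toeplitz-to-diagram block construction, so your proposal can only be compared with that source's argument --- which it matches: identify the ERS property with constant tower heights, read off an odometer factor from the floor coordinates, verify almost one-to-one-ness via the unique minimal and maximal paths, and invoke the characterization of Toeplitz flows as minimal expansive almost one-to-one extensions of odometers, the reverse direction being exactly the block construction the paper itself records. Your plan is sound; the only inaccuracy is the non-load-bearing parenthetical that the non-singleton fibres of $\pi$ sit over the orbit of the images of the extremal paths --- it is the fibres over that orbit that are singletons (which is all the almost one-to-one claim requires), while the non-singleton fibres lie over the non-integer elements of the odometer.
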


Indeed, in order to construct a Bratteli diagram for a Toeplitz flow 
$(\ov{\mathcal{O}
(\omega)}, \sigma)$, we need a sequence $\{\mathcal{A}_n\}$ of families
of blocks over $S$ as follows:
$$
\mathcal{A}_n = \{\omega[mp_n, (m+1)p_n - 1], m \in \mathbb{Z}\}.
$$
Observe that each $\mathcal{A}_n$ is a finite family of blocks $\{A_1^{(n)},
\ldots, A_{s_n}^{(n)}\}$ and $|A_i^{(n)}| = p_n$ for every $i = 1,\ldots,
s_n$. We will call the blocks $A_i^{(n)}$ the $n$-symbols. Then every $(n
+1)$-symbol $A_j^{(n+1)}$ is a concatenation of $\lambda_{n+1}$ $n$-
symbols, i.e.
\begin{equation}\label{n-symbol}
A_j^{(n+1)} = A_{j_1}^{(n)}\ldots A_{j_{\lambda}}^{(n)},
\end{equation}
 where $\lambda = \lambda_n$.

The families $\mathcal{A}_n$ of $n$-symbols allow us to construct a Bratteli
diagram $B_{\omega} = (V_{\omega}, E_{\omega})$ as follows. Set $V_n =
\{1,\ldots,s_n\}$ and $n \geq 1$ and $V_0 = \{v_0\}$. For any $j \in V_{n
+1}$, we set $r^{-1}(j) = \{j_1 < j_2 < \ldots < j_{\lambda}\}$, where the
vertices $j_1, \ldots, j_{\lambda} \in V_n$ come from~(\ref{n-symbol}) and
``$<$'' means an order in $r^{-1}(j)$. Then the incidence matrices $\tl F_n$
are the matrices of appearances of $n$-symbols inside the $(n+1)$-symbols,
i.e.
$$
\tl f_{ji}^{(n)} = \mathrm{card}\{1 \leq r \leq \lambda_n: A_{j_r}^{(n)} =
A_{j_i}^{(n)}\},
$$
where $A^{(n)}_{j_1}, \ldots, A^{(n)}_{j_{\lambda}}$ come from
(\ref{n-symbol}). For the Toeplitz-Bratteli diagram $B_{\omega} = (V_{\omega},
E_{\omega})$ we have
$$
h_i^{(n)} = p_n
$$
for $i = 1,2,\ldots, s_n$. Then
$$
f_{ji}^{(n)} = \frac{h_i^{(n)}}{h_{j+1}^{(n)}} \tl f_{ji}^{(n)} = \frac{1}
{\lambda_{n+1}}\tl f_{ji}^{(n)} = \mathrm{fr}^{(n)}(A_i^{(n)}, A_j^{(n
+1)}).
$$
Now again we can formulate Theorem~\ref{main1} using the frequency
matrices $F_n$ and describe the set of all invariant ergodic measures of a
Toeplitz flow $(\ov{\mathcal{O}(\omega)}, \sigma)$.  Using Theorem~
\ref{main_gen_case} we can construct Toeplitz flows with uncountably many
ergodic invariant measures.

\medskip
\textbf{Acknowledgements.}
The research of the second author is supported by the NCN (National Science
Center, Poland) Grant 2013/08/A/ST1/00275. The authors would like to thank Tomasz Downarowicz, Palle E.T. Jorgensen, Alejandro Maass and Mariusz Lema\'nczyk for helpful discussions. The authors are grateful to
the Nicolas Copernicus University and the University of Iowa for the
hospitality and support. The authors thank the referee for detailed comments on the paper, which helped to make the exposition much better. 

\bibliographystyle{alpha}
\bibliography{referencesBKK}

\end{document}